\newtheorem{theorem}{Theorem}[section]
\newtheorem{corollary}[theorem]{Corollary}
\newtheorem{lemma}[theorem]{Lemma}
\newtheorem{proposition}[theorem]{Proposition}
\theoremstyle{definition}
\newtheorem{definition}[theorem]{Definition}
\theoremstyle{remark}
\numberwithin{equation}{section}
\newcommand{\eps}{\varepsilon}
\newcommand{\calA}{\mathcal{A}}
\newcommand{\calD}{\mathcal{D}}
\newcommand{\calW}{\mathcal{W}}
\newcommand{\R}{\mathds{R}}
\newcommand{\N}{{\mathds{N}}}
\newcommand{\RR}{\mathrm{I\kern-0.20emR}}
\newcommand{\D}{\mathrm{d}\kern0.2pt}
\newcommand{\Hs}{{H^{3/2}}}
\newcommand{\ve}{{\varepsilon}}
\newcommand{\vp}{{\varphi}}
\newcommand{\pd}[2][]{\frac{\partial #1}{\partial #2}}
\newcommand{\pdd}[2][]{\frac{\partial^2 #1}{\partial #2^2}}
\DeclareMathOperator{\relint}{Int{\mbox{$_{\{y=0\}}$}}}
\DeclareMathOperator{\relintyzero}{Int{\mbox{$_{\{y=y_0\}}$}}}
\DeclareMathOperator{\supp}{supp}
\DeclareMathOperator{\dist}{dist}
\newcommand{\ssf}{\Psi}
\title[On high spots of the fundamental sloshing eigenfunctions]{On high spots of the fundamental sloshing eigenfunctions in axially symmetric domains}
\author[T. Kulczycki]{Tadeusz Kulczycki}
\author[M. Kwa{\'s}nicki]{Mateusz Kwa{\'s}nicki}
\thanks{T. Kulczycki and M. Kwa{\'s}nicki were supported in part by MNiSW grant \# N N201 373136.}
\address{T. Kulczycki and M. Kwa{\'s}nicki \newline Institute of Mathematics, Polish Academy of Sciences, ul. Kopernika 18, 51-617 Wroc{\l}aw, Poland. Institute of Mathematics and Computer Science, Technical University of Wroc{\l}aw, Wybrzeze Wyspianskiego 27, 50-370 Wroc{\l}aw, Poland.}
\email{T.Kulczycki@impan.pl}
\email{M.Kwasnicki@impan.pl}
\begin{document}
\begin{abstract}
We investigate the classical eigenvalue problem that arises in hydrodynamics and is referred to as the sloshing problem. It describes free liquid oscillations in a liquid container $W \subset \R^3$. We study the case when $W$ is an axially symmetric, convex, bounded domain satisfying the John condition. The Cartesian coordinates $(x,y,z)$ are chosen so that the mean free surface of the liquid lies in $(x,z)$-plane and $y$-axis is directed upwards ($y$-axis is the axis of symmetry). Our first result states that the fundamental eigenvalue has multiplicity 2 and for each fundamental eigenfunction $\varphi$ there is a change of $x,z$ coordinates by a rotation around $y$-axis so that $\varphi$ is odd in $x$-variable.

The second result of the paper gives the following monotonicity property of the fundamental eigenfunction $\varphi$. If $\varphi$ is odd in $x$-variable then it is strictly monotonic in $x$-variable. This property has the following hydrodynamical meaning. If liquid oscillates freely with fundamental frequency according to $\varphi$ then the free surface elevation of liquid is increasing along each line parallel to $x$-axis during one period of time and decreasing during the other half period. The proof of the second result is based on the method developed by D. Jerison and N. Nadirashvili for the hot spots problem for Neumann Laplacian.  
\end{abstract}

\maketitle

\section{Introduction}

Linear water-wave theory is a widely-used approach that allows to determine the frequencies and modes of free oscillations of a liquid in a container. Such oscillations exist provided the liquid's upper surface is free and, in the framework of this theory, one obtains their frequencies and modes from a mixed Steklov problem. The latter involves a spectral parameter in the boundary condition on the free surface. This boundary value problem (usually referred to as the sloshing problem) has been the subject of a great number of studies over 250 years (see \cite{FK} for a historical review and \cite{BKPS2010}, \cite{GHLST2008}, \cite{I2005}, \cite{KK2001}, \cite{KK2004}, \cite{KKM2004} for some of recent literature). It is also worth pointing out here that other Steklov type eigenvalue problems have attracted considerable attention in last years. For some of these developments, see e.g. \cite{GP2010}, \cite{FS2009}, \cite{BK2004}, \cite{DS2003}.

Recently, the question of the so-called `high spots' defined by sloshing eigenfunctions corresponding to the fundamental eigenvalue attracted the authors' attention. This question is not only similar, but closely related to the long-standing `hot spots' conjecture of J. Rauch. (It is worth mentioning that a substantial progress has been achieved in studies of this conjecture for the Neumann Laplacian during the past decade; see, for example, the works \cite{S1999}, \cite{BB1999}, \cite{JN2000}, \cite{B2005}, \cite{B2006}.) Roughly speaking, the question about high spots concerns monotonicity properties of fundamental sloshing eigenfunctions (see subsection 1.3 for a detailed description). Several results about the location of high spots were proved in \cite{KK2009} and \cite{KK2010}. One of them deals with the fundamental eigenfunction (it is unique up to a non-zero factor) of the two-dimensional sloshing problem in the case when the domain's top interval is the one-to-one orthogonal projection of the bottom. The other one treat fundamental eigenfunctions in troughs (their cross-sections are subject to the same condition), and some vertical axisymmetric containers. Moreover, it was shown in \cite{KK2009} that for vertical-walled containers with horizontal bottom the question about high spots is equivalent to the hot spots conjecture. 

The aim of this paper is to study the location of high spots for fundamental eigenfunctions satisfying the three-dimensional sloshing problem in axially symmetric domains of rather general shape. It occurs that the method, which can be briefly characterised as the method of domain's deformation (it was developed by D. Jerison and N. Nadirashvili  in \cite{JN2000} in order to prove the hot spots conjecture for domains with two axes of symmetry) is adaptable for our purpose. The result demonstrating that eigenvalues and eigenfunctions of the sloshing problem depend continuously on the domain deformation are of interest in itself.

\subsection{Sloshing problem}

First we formulate the three-dimensional sloshing problem in its general form. 

Let an inviscid, incompressible, heavy liquid occupy a three-dimensional container bounded from above by a free surface, which in its mean position is a simply connected two-dimensional domain of finite diameter. Let Cartesian coordinates $(x,y,z)$ be chosen so that the mean free surface lies in the $(x,z)$-plane and the $y$-axis is directed upwards. The surface tension is neglected on the free surface, and we assume the liquid motion to be irrotational and of small amplitude. The latter assumption allows us to linearise boundary conditions on the free surface and this leads to the following boundary value problem for $\vp (x,y,z)$ --- the velocity potential of the flow with a time-harmonic factor removed:
\begin{align}
 & \Delta \vp = 0 && {\rm in}\  W, \label{lap} 
 \\ &  \pd[\vp]{y}  = \nu \vp && {\rm on}\  F, \label{nu} \\ & \frac{\partial \vp}{\partial n} = 0 && {\rm on}\  B. \label{nc}
\end{align}
Here $W \subset \{ (x,y,z) \in \R^3 : y < 0 \} $ is the domain which is supposed to be a bounded Lipschitz domain. The boundary $\partial W$  consists of a two-dimensional domain $F \subset \{(x, 0, z) : x, z \in \R\}$ referred to as the free surface (we assume that $F$ is a bounded Lipschitz domain) and $B = \partial W\setminus {F}$, 
the rigid container's bottom. In the whole paper we will refer to a domain $W$ with the above geometric properties as to a \emph{liquid domain}. Throughout the article, $\frac{\partial}{\partial n}$ denotes the normal derivative at $\partial W$, which is well-defined for almost every (with respect to the surface measure) point of $\partial W$. We understand that $\vp$ is a continuous function on $\overline{W}$, and that (\ref{nc}) is satisfied for all points on $B$ for which $\frac{\partial}{\partial n}$ is defined. We remark that in condition \eqref{nu}, the coefficient $\nu = \omega^2/g$ is the spectral parameter which involves the radian frequency $\omega$ of liquid's oscillations and the acceleration due to gravity $g$. 

The zero eigenvalue obviously exists for the problem \eqref{lap}--\eqref{nc}, but we exclude it with the help of the following orthogonality condition:
\begin{equation}
\int_{ F} \vp = 0 . \label{ort}
\end{equation}
It has been known since the 1950s that the problem \eqref{lap}--\eqref{ort} has a discrete spectrum; that is, there exists a sequence of eigenvalues
\begin{equation}
0 < \nu_1 \leq \nu_2 \leq \dots \leq \nu_n \leq \dots, \label{seq}
\end{equation}
each having a finite multiplicity equal to the number of repetitions in \eqref{seq}, and such that $\nu_n\to \infty$ as $n\to \infty$. These eigenvalues can be found by means of the variational principle (see, for example, \cite{M}), corresponding to the following Rayleigh quotient:
\begin{equation} 
\frac{\int_W \bigl[ (\pd[\vp]{x})^2 + (\pd[\vp]{y})^2 + (\pd[\vp]{z})^2  \bigr]} {\int_{F} \vp^2} , \label{rq}
\end{equation}
where $\vp$ is in the Sobolev space $H^1(W)$ and satisfies~\eqref{ort}. 
Thus $\nu_1$ is equal to the minimum of this quotient over the subspace of the Sobolev space $H^1 (W)$ which consists of functions satisfying~\eqref{ort}; the corresponding eigenfunction delivers the value $\nu_1$ to the quotient. The eigenfunctions $\vp_n$ ($n = 1,2,\dots$) belong to $H^1 (W)$ and form a complete system in an appropriate Hilbert space (see, for example, \cite{KK2001}).

\subsection{Axisymmetric containers}

\begin{figure}
\centering
\includegraphics[scale=0.8]{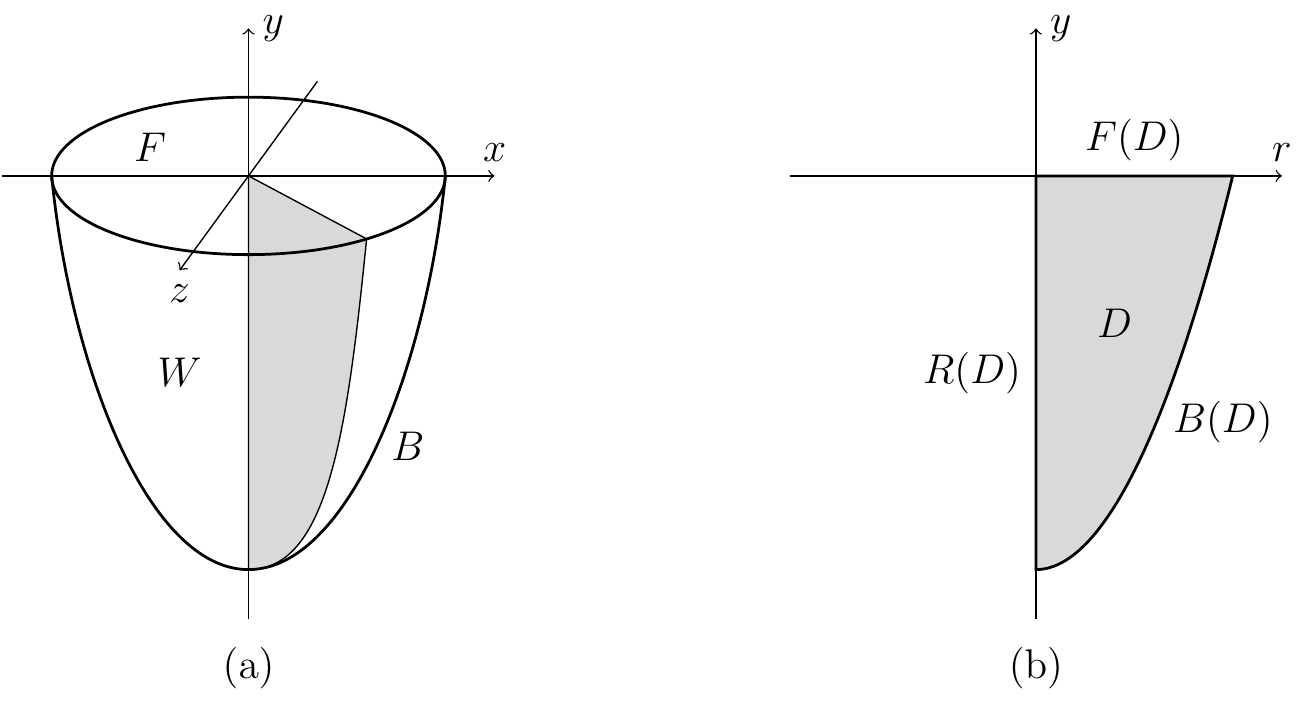}
\caption{(a) An axisymmetric container $W$ obtained as rotation of a domain $D$. (b) A domain $D$.}
\label{fig:1}
\end{figure}

Now we turn to the problem of sloshing in axisymmetric containers. It is convenient to introduce the cylindrical coordinates $(r,\theta,y)$ so that
\begin{equation}
x = r \cos \theta, \quad z = r \sin \theta . \label{cylindrical}
\end{equation}
and to take the $y$-axis as the axis of symmetry for $W$. In this case $F$ is typically a disc on $xz$-plane (see Figure~\ref{fig:1}(a)). Moreover, we will consider $W$ as obtained by rotation of a domain $D$ adjacent to both axes in the $ry$-plane. For such liquid domains, we write $W = W(D)$. It is convenient to think of $D$ as the cross-sections of $W$ along the half-plane $\theta = 0$.  By $F(D)$ and $B(D)$ we denote the cross-sections of $F$ and $B$, respectively,  while $R(D)$ is the part of $\partial D$ located on the $y$-axis (see Figure~\ref{fig:1}(b)).

It is clear that the ansatz
\begin{equation}
\vp = \psi(r,y) \cos (m \theta), \quad \text{or} \quad 
\vp = \psi(r,y) \sin (m \theta),
 \quad m = 0,1,2, \ldots , \label{rep}
\end{equation}
where $\psi$ is  bounded near $R(D)$, reduces the eigenvalue problem (\ref{lap})--(\ref{ort}) in $W$ to the following sequence of boundary value problems:
\begin{align}
\label{lapD}
& \pdd[\psi]{r} + \frac{1}{r} \,  \pd[\psi]{r}  + \pdd[\psi]{y} - \frac{m^2}{r^2} \, \psi = 0 && \text{in} \  D,\\
\label{nuD}
& \pd[\psi]{y} = \nu \psi && \text{on} \  F(D),\\
\label{ncD}
& \frac{\partial \psi}{\partial n} = 0 && \text{on} \  B(D) .
\end{align}
These relations must hold for all $m$, and  by~\eqref{ort}, for $m = 0$ we also have that
\begin{equation}
\label{ortD}
\int_{F(D)} \psi(r,y) r \, dr \, dy = 0 .
\end{equation}
The above reduction was applied by many authors, see, in particular, \cite[p.~56]{LBK1984}, \cite[p.~294]{KK2004}, \cite[formulas~(13), (14)]{GHLST2008}.

The variational method guarantees that for every $m = 0,1,2, \ldots$, the spectral problem (\ref{lapD})--(\ref{ortD}) has a sequence of eigenvalues 
\begin{equation}
0 < \nu_{m,1} \le \nu_{m,2} \le \nu_{m,3} \le \ldots , \quad m = 0,1,2, \dots , \label{meigenvalues}
\end{equation}
and by $\psi_{m,k}$, $m \ge 0$, $k \ge 1$, we denote the double sequence of corresponding eigenfunctions. Every eigenvalue in (\ref{meigenvalues}) has a finite multiplicity equal to the number of repetitions; moreover, for every $m \geq 0$ we have that $\nu_{m,k} \to \infty$ as $\ k \to \infty$. 

It is clear that, the sequence of eigenvalues $\{\nu_n\}_{n = 1}^{\infty}$ of problem (\ref{lap})--(\ref{ort}) for $W$ coincides with the double sequence $\{\nu_{m,k}\}_{m = 0, k = 1}^{\infty}$, with every number $\nu_{m,k}$ repeated twice when $m \ge 1$. Thus the sequence of problems (\ref{lapD})--(\ref{ortD}) is equivalent to the original sloshing problem. 

\subsection{Statement of results} We need the following definition. We say that a liquid domain $W$ satisfies \emph{the John condition} when $W \subset F \times (-\infty,0)$.

The following theorem is the main result of this paper (cf. \cite[Theorem~1.1]{JN2000}, \cite[Theorem 2.1]{KK2009}, \cite[Theorem 3.1]{KK2010}).
\begin{theorem}
\label{th:mainconvex}
Let us consider the sloshing problem (\ref{lap} - \ref{ort}). Assume that a liquid domain $W$ is an axisymmetric, convex, bounded domain, satisfying the John condition. We consider $W$ as obtained by rotation of a domain $D$ (see Figure~\ref{fig:1}). Then we have:
\begin{enumerate}
\item[(i)] The fundamental eigenvalue $\nu_1$ equals $\nu_{1,1}$ and has multiplicity 2. Two linearly independent eigenfunctions corresponding to $\nu_1$ are given in cylindrical coordinates by $\psi_{1,1}(r,y) \cos \theta$ and $\psi_{1,1}(r,y) \sin \theta$.
\item[(ii)] After multiplication by $\pm 1$ we may assume that $\psi_{1,1} > 0$ on $D$. We have 
\begin{equation*}
\frac{\partial \psi_{1,1}}{\partial r} > 0, \quad 
\frac{\partial \psi_{1,1}}{\partial y} > 0 \quad
\text{on} \quad  D.
\end{equation*}
\item[(iii)] Let $\varphi(x,y,z)$ be the eigenfunction corresponding to $\nu_1$ which is odd in $x$ variable (in cylidrical coordinates $\varphi$ equals $\psi_{1,1}(r,y) \cos \theta$). Denote $W_+ = \{(x,y,z) \in W: \, x > 0 \}$.
After multiplication by $\pm 1$ we may assume that $\vp > 0$ on $W_+$.  Then
\begin{equation*}
\frac{\partial \vp}{\partial x} > 0 \quad \text{on} \quad W, \quad 
\frac{\partial \vp}{\partial y} > 0 \quad \text{on} \quad W_+.
\end{equation*}
For any other eigenfunction $\varphi$ corresponding to $\nu_1$ there is a change of $x, z$ coordinates by a rotation around $y$-axis so that $\varphi$ is odd in $x$ variable.
\end{enumerate}
\end{theorem}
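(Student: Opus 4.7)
The plan is to use the cylindrical decomposition (\ref{rep}) to reduce everything to the 2D boundary value problems (\ref{lapD})--(\ref{ortD}) on $D$, prove (i) and (ii) in that 2D framework, and then deduce (iii) from (ii). The implication (ii)$\Rightarrow$(iii) is a direct change-of-variables computation: with $\vp = \psi_{1,1}(r,y)\cos\theta$, one has
\[
\frac{\partial \vp}{\partial x} = \cos^2\theta \,\frac{\partial \psi_{1,1}}{\partial r} + \sin^2\theta \,\frac{\psi_{1,1}}{r}, \qquad \frac{\partial \vp}{\partial y} = \cos\theta \,\frac{\partial \psi_{1,1}}{\partial y}.
\]
Once (ii) gives $\partial_r\psi_{1,1}>0$, $\partial_y\psi_{1,1}>0$ on $D$ and $\psi_{1,1}>0$ on $D\setminus R(D)$, the Hopf lemma on the axis $R(D)$ yields $\psi_{1,1}/r>0$ throughout $D$, so both terms in $\partial_x\vp$ are strictly positive on $W$. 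On $W_+$ one has $\cos\theta>0$, hence $\partial_y\vp>0$. The last sentence of (iii) is then immediate, since by (i) every eigenfunction at $\nu_1$ has the form $\psi_{1,1}(r,y)(a\cos\theta+b\sin\theta)=c\psi_{1,1}(r,y)\cos(\theta-\theta_0)$, which becomes odd in the new $x$-coordinate after rotating $(x,z)$ by $\theta_0$.

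For part (i), I first prove $\nu_{m,1}>\nu_{1,1}$ for every $m\ge 2$ by comparing Rayleigh quotients: any admissible $\psi$ for the $m$-problem satisfies $\psi|_{R(D)}=0$ and is therefore admissible for $m=1$, with strictly smaller quotient because of the dropped positive term $(m^2-1)\int_D\psi^2 r^{-1}\,dr\,dy$ in the numerator. The harder inequality is $\nu_{1,1}<\nu_{0,1}$. Here I would construct a test function for $m=1$ by differentiating a suitable first non-trivial eigenfunction of the original 3D problem in a horizontal direction, using convexity of $W$ to guarantee the correct sign of the boundary contributions and the John condition $W\subset F\times(-\infty,0)$ to bound the quotient, following the strategy already used in the 2D sloshing setting of \cite{KK2009} and \cite{KK2010}. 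Simplicity within the $m=1$ problem, i.e.\ $\nu_{1,1}<\nu_{1,2}$, then follows from the fact that $\psi_{1,1}$, being the first eigenfunction of a mixed Steklov/Neumann/effective-Dirichlet problem, does not change sign.

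The bulk of the work is (ii), which I would prove by adapting the Jerison--Nadirashvili domain-deformation method. Join the given $D$ to a simple reference cross-section (for example a half-disc, for which the required monotonicity can be verified by direct computation) through a continuous one-parameter family $\{D_s\}_{s\in[0,1]}$ of axisymmetric, convex, bounded cross-sections satisfying the John condition, and invoke the continuous dependence of eigenvalues and eigenfunctions on $s$ announced in the introduction. The set of $s$ for which the strict inequalities $\partial_r\psi_{1,1}>0$ and $\partial_y\psi_{1,1}>0$ hold is open by continuity. To show it is also closed, set $v_r=\partial_r\psi_{1,1}$ and $v_y=\partial_y\psi_{1,1}$, derive the elliptic equation and the mixed boundary conditions they satisfy on $D_s$, and show that at any hypothetical first failure time these functions would attain a non-strict zero somewhere, which is ruled out by the strong maximum principle together with the Hopf lemma; the sign of the outward normal on $B(D_s)$ forced by convexity is what makes the oblique-derivative conclusion of Hopf's lemma go in the right direction. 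The main obstacle, and the place where convexity and the John condition enter most delicately, is this final step near the corners $\partial F(D_s)\cap\partial B(D_s)$ and on the symmetry axis $R(D_s)$, because the mixed character of the boundary conditions and the singular lower-order term $m^2/r^2$ in (\ref{lapD}) make the boundary analysis considerably more delicate than in the pure Neumann setting of \cite{JN2000}.
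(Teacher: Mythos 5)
Your reduction of (iii) to (ii) via the change of variables and the identification of all $\nu_1$-eigenfunctions as rotations is correct and matches the paper. For the easy half of (i), ruling out $m\ge 2$, your Rayleigh-quotient monotonicity in $m$ is a legitimate alternative to the paper's nodal-domain count, and the observation that $\nu_{1,1}<\nu_{1,2}$ follows from sign-constancy of $\psi_{1,1}$ is also standard and correct. But two places in your outline have real gaps.

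First, for the key inequality $\nu_{1,1}<\nu_{0,1}$ your plan is to ``differentiate a first non-trivial 3D eigenfunction in a horizontal direction'' and control the Rayleigh quotient using convexity and the John condition, citing the 2D arguments of \cite{KK2009,KK2010}. This does not transfer: differentiating the $m=0$ eigenfunction $\psi_{0,1}(r,y)$ horizontally produces an $m=1$-type function $(\partial_r\psi_{0,1})\cos\theta$, but its trace on $F$ is not $L^2$-orthogonal to $\psi_{1,1}\cos\theta$, it does not satisfy the Neumann condition on $B$, and there is no obvious sign control of the resulting boundary integrals near the corner and the axis. The paper in fact flags this as the hardest step and resolves it by a completely different route: introduce the Stokes stream function $\Psi$ of the axisymmetric eigenfunction, show (Lemmas~\ref{lem:ssf} and \ref{lem:nu01}) that a sign condition on $\partial_r(r^{-1}\partial_r\Psi)$ identifies $\nu_{0,1}$, then apply this to Troesch's explicit family $W_\lambda$ from the inverse method to obtain Corollary~\ref{cor:Troesch}, and finally squeeze $\nu_{0,1}(W)$ between the Troesch domains and cylinders using domain monotonicity (Lemmas~\ref{monotonicity1},~\ref{monotonicity2},~\ref{lem:eigencyl}). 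That mechanism has no counterpart in your sketch.

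Second, for (ii) the overall deformation-plus-openness/closedness plan is the right shape, but your closing argument ``strong maximum principle plus Hopf at the first failure time'' only covers half of what is needed. Working with $v_r=\partial_r\psi$, $v_y=\partial_y\psi$ directly on $D$ is problematic because of the $m^2/r^2$ singularity in \eqref{lapD}; the paper instead lifts to the genuinely harmonic 3D functions $\partial_x\vp$, $\partial_y\vp$ on $W$, $W_+$. More importantly, the Hopf argument (Lemma~\ref{lem:yderivative}) handles only the $y$-derivative along $B_2(D)$, and propagating $\partial_y\psi\ge0$ from $B_2(D)$ to $D$ (Lemma~\ref{lem:B2y}) is a maximum-principle argument. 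But $\partial_x\vp\ge0$ on $W$ (Lemma~\ref{lem:B2x}) cannot be obtained by maximum principle alone, because on $F$ the differentiated function satisfies a Steklov condition $\partial_n u=\nu u$, not a sign-preserving condition. The paper instead uses a variational argument: if $\partial_x\vp<0$ somewhere, reflect the sublevel component to build an odd test function $u$ with $\int_W|\nabla u|^2=\nu_W\int_F u^2$, and show via Lemma~\ref{lem:nablauWF} that this forces $u$ to be proportional to $\vp_W$ on $F$, a contradiction. This Rayleigh-quotient step is essential and missing from your sketch. You should also be more careful about the reference domain (the paper deforms to a rectangle/cylinder, where the eigenfunction is explicit in terms of $J_1$, rather than a half-disc) and about the uniform a~priori $C^{2,1}$ bounds along the family (Lemma~\ref{lem:uniformnorm}) needed to make the limiting argument rigorous.
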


The third part of the theorem has the following hydrodynamical meaning. If liquid oscillates freely with the fundamental frequency $\nu_1$ then at every moment the free-surface elevation of liquid is proportional to the fundamental eigenfunction $\varphi(x,0,z)$ (see e.g. \cite{L1932}). If we assume that $\varphi(x,y,z)$ is odd in $x$ variable then the elevation is increasing along each line parallel to $x$ axis during one half-period of liquid oscillation and decreasing during the other half-period. In particular when the free surface is $F = \left\{ (x,y,z): \, x^2 + z^2 < r_0^2, \, y = 0\right\}$ then the elevation has its maximum at $(r_0,0,0)$ and minimum at $(- r_0,0,0)$ during one half-period of oscillation, whereas during the other half-period the maximum and minimum values exchange places with one another. This is the reason to call this property the `high spots' theorem. 

One could ask whether the assumption that $W$ satisfies the John condition is necessary in Theorem \ref{th:mainconvex}. It occurs that if $W$ does not satisfy the John condition then the monotonicity property of $\psi_{1,1}$ does not necessarily hold. More precisely we have

\begin{proposition}
\label{notJohn}
Let us consider the sloshing problem (\ref{lap} - \ref{ort}). Assume that $W$ is an axisymmetric liquid domain for which $F$ is a disk and $B$ is a $C^2$ surface. We consider $W$ as obtained by rotation of a domain $D$. After multiplication by $\pm 1$ we may assume that $\psi_{1,1} > 0$ on $D$. If the angle between $F(D)$ and $B(D)$ at the point where $F(D)$ and $B(D)$ meet (see Figure~\ref{fig:1}(b)) is bigger than $\pi/2$ and smaller than $\pi$ then $\psi_{1,1}$ attains maximum in the interior of $F(D)$, and $\frac{\partial \psi_{1,1}}{\partial r}$ changes the sign in $D$.
\end{proposition}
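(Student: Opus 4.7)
The plan is to carry out an asymptotic analysis of $\psi_{1,1}$ at the corner $P = F(D) \cap B(D)$, combine it with the behavior of $\psi_{1,1}$ on the axis $R(D)$, and apply a maximum-principle argument. Introduce local Cartesian coordinates $\xi = r - r_0$, $\eta = y$ centered at $P$, where $r_0$ is the $r$-coordinate of $P$, and let $\alpha$ denote the interior angle of $D$ at $P$.

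First I would establish $C^{1,\gamma}$ regularity of $\psi_{1,1}$ up to $P$. Near $P$ the coefficients $1/r$ and $-1/r^{2}$ of the operator in (\ref{lapD}) are smooth, $F(D)$ is flat, and $B(D)$ is $C^{2}$ by hypothesis. For the mixed Steklov--Neumann problem at an angular point of opening $\alpha \in (0,\pi)$, the Steklov condition $\psi_y = \nu_{1,1}\psi$ reduces at leading order to a Neumann condition, so Kondratiev--Maz'ya corner theory produces the characteristic exponents $n\pi/\alpha$, $n = 0,1,2,\ldots$. For $\alpha \in (\pi/2,\pi)$ the first nonconstant exponent $\pi/\alpha$ lies in $(1,2)$, giving $\psi_{1,1} \in C^{1,\gamma}(\overline{D}\cap U)$ on a neighborhood $U$ of $P$ and, in particular, a genuine pointwise first-order Taylor expansion at $P$. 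I view this corner-regularity assertion as the main technical obstacle of the proof; once it is in hand, the remaining steps are elementary.

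With $C^{1}$ regularity up to $P$, pass to the limit at $P$ in the two boundary conditions. The Steklov condition gives $\partial \psi_{1,1}/\partial y\,(P) = \nu_{1,1}\psi_{1,1}(P)$. Since $B(D)$ is $C^{2}$, the outward unit normal extends continuously to $n(P) = (\sin\alpha,-\cos\alpha)$, and the Neumann condition gives $\nabla\psi_{1,1}(P) \cdot n(P) = 0$. Combining the two yields
$$
\frac{\partial \psi_{1,1}}{\partial r}(P) \;=\; \nu_{1,1}\,\psi_{1,1}(P)\,\cot\alpha.
$$
For $\alpha \in (\pi/2,\pi)$ one has $\cot\alpha < 0$, and since $\nu_{1,1}>0$ and $\psi_{1,1}(P)>0$, we conclude $\partial \psi_{1,1}/\partial r\,(P) < 0$. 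On the other hand, any bounded solution of (\ref{lapD})--(\ref{ncD}) with $m = 1$ behaves as $\psi_{1,1}(r,y) = r\,h(y) + O(r^{2})$ near $r=0$ with $h > 0$ by positivity of $\psi_{1,1}$, so $\partial \psi_{1,1}/\partial r > 0$ in a neighborhood of $R(D)$. Therefore $\partial \psi_{1,1}/\partial r$ changes sign in $D$, which is the second assertion of the proposition.

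It remains to locate the maximum of $\psi_{1,1}$. At any interior maximum point one would have $\psi_{rr},\psi_{yy} \le 0$, $\psi_r = 0$ and $-\psi_{1,1}/r^{2} < 0$, contradicting (\ref{lapD}); so the maximum lies on $\partial D$. Hopf's lemma, applied to the operator in (\ref{lapD}) (whose zero-order coefficient is nonpositive) on the $C^{2}$ piece $B(D)$, rules out a maximum at any smooth point of $B(D)$ because the outward normal derivative vanishes there. The segment $R(D)$ together with its endpoints $R(D)\cap F(D)$ and $R(D)\cap B(D)$ is excluded by $\psi_{1,1}|_{R(D)} = 0$. The only remaining candidate among the corners is $P$ itself, and this is excluded because $\partial \psi_{1,1}/\partial r\,(P) < 0$ implies $\psi_{1,1}(r,0) > \psi_{1,1}(P)$ for $r$ slightly less than $r_{0}$. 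Consequently the maximum of $\psi_{1,1}$ is attained in the interior of $F(D)$, finishing the proof.
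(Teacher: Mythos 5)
Your route is in substance the paper's route: everything hinges on the corner relation $\frac{\partial \psi_{1,1}}{\partial r}(P)=\nu_{1,1}\cot\alpha\,\psi_{1,1}(P)$ at the contact point, which is exactly the expansion the paper quotes from \cite{LBK1984} (formula (13.3)) and \cite{K1980}. You re-derive it from a claimed $C^{1,\gamma}$ corner regularity via Kondratiev--Maz'ya theory; this is correct in substance (the Steklov condition is a lower-order perturbation of the Neumann condition, the Neumann--Neumann wedge exponents are $k\pi/\alpha$ with $\pi/\alpha\in(1,2)$, and the Steklov datum generates precisely the linear particular solution whose coefficient is $\nu_{1,1}\cot\alpha$), but it is the technical heart of the argument and is asserted rather than proved or cited precisely, whereas the paper simply invokes the known asymptotics. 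Your localization of the maximum (interior points excluded by the sign of the zero-order coefficient in \eqref{lapD}, smooth points of $B(D)$ by Hopf's lemma, the axis and its endpoints by $\psi_{1,1}=0$ there) is a more detailed version of the paper's one-line remark that $\varphi$ attains its maximum on $\overline{F}$, and is fine.

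There is, however, one genuine gap: you assert $\psi_{1,1}(P)>0$ without justification. The hypothesis gives $\psi_{1,1}>0$ only on the open set $D$, hence $\psi_{1,1}(P)\ge 0$ by continuity, and strict positivity at the contact point is not obvious (Hopf-type lemmas are unavailable at the corner, and the Harnack inequality up to the boundary used elsewhere in the paper applies to Neumann pieces, not to the Steklov piece). If $\psi_{1,1}(P)=0$, your own computation yields $\nabla\psi_{1,1}(P)=0$, and the inequality $\frac{\partial \psi_{1,1}}{\partial r}(P)<0$, on which both conclusions of your proof rest, fails. The paper avoids this by splitting into the cases $\psi(R,0)=0$ and $\psi(R,0)>0$: in the first case the corner is trivially not a maximum point (the maximum is positive), and since $\psi(0,0)=0$ as well, $\psi(\cdot,0)$ must rise and then fall, so $\frac{\partial\psi}{\partial r}$ still changes sign. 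Your argument is easily repaired in the same way, but as written the case $\psi_{1,1}(P)=0$ is not covered. A smaller imprecision: positivity of $\psi_{1,1}$ only gives $h\ge 0$ in your expansion near the axis; $h>0$ requires Hopf's lemma for $\varphi$ across the symmetry plane $\{x=0\}$, or one can simply note that $\psi_{1,1}(0,y)=0<\psi_{1,1}(r,y)$ already forces $\frac{\partial \psi_{1,1}}{\partial r}>0$ somewhere, which is all you need.
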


One could also ask whether assumption about convexity of $W$ is necessary in Theorem \ref{th:mainconvex}. Indeed we will show monotonicity property of $\psi_{1,1}$ for slightly more general class of domains, see Definition \ref{classD} and Theorem \ref{th:oddmon}. However for this class of domains we were not able to prove that $\nu_1 = \nu_{1,1}$.  

Although numerical results strongly suggest that Theorem \ref{th:mainconvex} (i) should hold the proof of Theorem \ref{th:mainconvex} (i) is far from being trivial. The most difficult part of this proof is to show that $\nu_1$ is not $\nu_{0,1}$ which is the smallest eigenvalue corresponding to an axially symmetric eigenfunction. The proof of Theorem \ref{th:mainconvex} (i) is based on results by Troesch \cite{T1960} obtained by inverse methods.

\subsection{Organization of the paper}

In Section 2 we prove Theorem \ref{th:mainconvex} (i). The rest of the paper deals with monotonicity properties of fundamental eigenfunctions. We use methods from \cite{JN2000}, which may be briefly described as deformations of domains. In Section 3 we first define a new class of domains $\calW$ and formulate monotonicity properties of $\psi_{1,1}$ for this class (see Theorem \ref{th:oddmon}). Then we prove continuos dependence of $\nu_{1,1}$ and $\psi_{1,1}$ under certain variations of the domain. In Section 4 we prove monotonicity properties of $\psi_{1,1}$ for a special class of piecewise smooth domains. In Section 5 we pass to the limit to obtain the same result for class $\calW$. As a conclusion we obtain Theorem \ref{th:mainconvex} (ii) and (iii). 

Throughout the article, except Definition \ref{classA}, Lemma \ref{covering} and Lemma \ref{H32}, we only study axisymmetric water domains $W = W(D)$, with free surface $F$ being a disk. We switch freely between Cartesian and cylindrical coordinate systems.  By scaling, it will be often sufficient to consider the case when $F$ is the unit disk $\{ (x, y, z) : x^2 + y^2 < 1, \, y = 0 \}$. By a standard argument, $\psi_{1,1}$ does not change sign in $D$, and $\nu_{1,1} < \nu_{1,2}$. With no loss of generality, we assume that $\psi_{1,1}$ is positive on $D$. We frequently use the continuity of $\psi_{1,1}$ on $\overline{D}$ and smoothness of $\psi_{1,1}$ in $D$.

\section{Fundamental eigenvalue}
In this section we prove Theorem \ref{th:mainconvex} (i). The most difficult element of the proof of this theorem is to exclude the possibility that $\nu_1$ equals $\nu_{0,1}$.

At first we need to introduce an auxiliary Dirichlet-Steklov problem (see \cite{BKPS2010} for formal introduction of this problem). 
\begin{align}
\label{eq:lap2}
 & \Delta \vp = 0 && \text{in $W$,} \\
\label{eq:nu2}
 & \pd[\vp]{y} = \tilde{\nu} \vp && \text{on $F$,} \\
\label{eq:nc2}
 & \vp = 0 && \text{on $B$.}
\end{align}
Here we assume that $W = W(D)$ is an axisymmetric liquid domain. The problem~\eqref{eq:lap2}--\eqref{eq:nc2} has a variational formulation similar to the one described in the Introduction for the sloshing problem (see~\eqref{rq}), with the only difference in the class of admissible functions for the Rayleigh quotient, which is now the space of $H^1(W)$ functions which vanish continuously at $B$ (see \cite{BKPS2010} for more details). Since $W$ is axisymmetric it is possible to use the same ansatz (\ref{rep}) for Dirichlet-Steklov problem as for the sloshing problem. We denote the eigenvalues of~\eqref{eq:lap2}--\eqref{eq:nc2} by $\tilde{\nu}_{m,k}(W)$ in a similar manner as for the sloshing problem~\eqref{lap}--\eqref{ort}. A standard argument shows that the first eigenvalue of~\eqref{eq:lap2}--\eqref{eq:nc2} is simple, it equals $\tilde{\nu}_{0,1}(W)$, the corresponding eigenfunction has constant sign, and it is the only eigenfunction with this property. 

Using standard arguments (see e.g. \cite[Section 3]{BKPS2010}, \cite{KK2009}, \cite{M}) one obtains the following domain monotonicity results for eigenvalues for both the Dirichlet-Steklov problem \eqref{eq:lap2}--\eqref{eq:nc2} and the sloshing problem \eqref{lap}--\eqref{ort}. We omit the proofs of these results because they are very similar to the proofs of Propositions 3.1.1 and 3.2.1 in \cite{BKPS2010}.

\begin{lemma}
\label{monotonicity1}
Let $W_1$, $W_2$ be axisymmetric liquid domains. If $W_1 \subset W_2$ and $F_1 \subset F_2$ then $\tilde{\nu}_{0,1}(W_1) \ge \tilde{\nu}_{0,1}(W_2)$.
\end{lemma}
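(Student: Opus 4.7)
The plan is the standard variational argument for domain monotonicity of eigenvalues with a Dirichlet-type boundary condition on the bottom. Because $\tilde{\nu}_{0,1}(W)$ is the overall first eigenvalue of the Dirichlet--Steklov problem \eqref{eq:lap2}--\eqref{eq:nc2}, it admits the Rayleigh quotient characterization
\[
\tilde{\nu}_{0,1}(W) = \inf \left\{ \frac{\int_W |\nabla v|^2}{\int_F v^2} : v \in H^1(W),\ v|_B = 0,\ v \not\equiv 0 \text{ on } F \right\}.
\]
The goal is therefore to build, from an admissible trial function on the smaller domain $W_1$, an admissible trial function on $W_2$ that realizes the same value of the Rayleigh quotient.

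Given $v \in H^1(W_1)$ with $v|_{B_1} = 0$, I would define $\tilde{v} = v$ on $W_1$ and $\tilde{v} = 0$ on $W_2 \setminus W_1$. Two things need to be checked. First, the hypothesis $F_1 \subset F_2 \subset \partial W_2$ yields $F_1 \cap W_2 = \emptyset$, so the interface where the two pieces of the extension meet is contained in $\partial W_1 \cap W_2 \subset B_1$; since $v$ has vanishing trace on $B_1$, the extension lies in $H^1(W_2)$. Second, every point of $B_2$ either lies outside $\overline{W_1}$ (where $\tilde{v} \equiv 0$), or on $B_1 \cap B_2$ (where the limits from the $W_1$ side and the $W_2 \setminus W_1$ side both equal zero), or on the one-dimensional curve $B_2 \cap \overline{F_1}$ of negligible two-dimensional surface measure; in all cases $\tilde{v}|_{B_2} = 0$ in the trace sense, so $\tilde{v}$ is admissible for $W_2$.

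A direct computation then yields $\int_{W_2} |\nabla \tilde{v}|^2 = \int_{W_1} |\nabla v|^2$ and $\int_{F_2} \tilde{v}^2 = \int_{F_1} v^2$, the latter because $\tilde{v}$ has vanishing trace on $F_2 \setminus F_1$. Choosing $v$ to be the eigenfunction realizing $\tilde{\nu}_{0,1}(W_1)$, we obtain $\tilde{\nu}_{0,1}(W_2) \le \tilde{\nu}_{0,1}(W_1)$, as desired.

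The only technical point is the admissibility check --- in particular, making sure that the zero extension does not pick up spurious boundary contributions near the waterline $\partial F_1$, where $F_1$, $B_1$ and $B_2$ may all meet. As the authors remark, this is precisely the bookkeeping already carried out in the proof of Proposition 3.1.1 of \cite{BKPS2010}, so it presents no substantial obstacle.
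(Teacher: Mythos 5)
Your argument is correct and is exactly the intended one: the paper omits the proof of Lemma \ref{monotonicity1}, referring to Proposition 3.1.1 of \cite{BKPS2010}, which is precisely this zero-extension trial-function argument in the Rayleigh quotient for the Dirichlet--Steklov problem (the hypothesis $F_1 \subset F_2$ guaranteeing admissibility, as you note). No discrepancy with the paper's approach.
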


\begin{lemma}
\label{monotonicity2}
Let $W_1$, $W_2$ be axisymmetric liquid domains. If $W_1 \subset W_2$ and $F_1 = F_2$ then ${\nu}_{0,1}(W_1) \le {\nu}_{0,1}(W_2)$ and ${\nu}_{1,1}(W_1) \le {\nu}_{1,1}(W_2)$.
\end{lemma}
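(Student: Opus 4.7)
The plan is to use the variational characterisation of the eigenvalues via the Rayleigh quotient and to feed the $W_2$-eigenfunction, restricted to $W_1$, back into the quotient on $W_1$. The key observation is that since $F_1 = F_2$, the denominator of the quotient is unchanged under restriction, while the Dirichlet numerator can only decrease when the domain of integration shrinks.

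Concretely, for $m \in \{0, 1\}$, let $\psi_2 = \psi_{m,1}^{(W_2)}$ realise $\nu_{m,1}(W_2)$, and let $\varphi_2$ denote the corresponding three-dimensional function produced via the ansatz~\eqref{rep} (taking the $\cos\theta$ branch when $m=1$). I would first recall that $\nu_{m,1}(W_i)$ is the minimum of
$$\int_{W_i}|\nabla\varphi|^2 \,\Big/\, \int_{F_i}\varphi^2$$
over the subspace of $H^1(W_i)$ consisting of functions which are axisymmetric and satisfy~\eqref{ort} (when $m=0$), or which have the form $\psi(r,y)\cos\theta$ (when $m=1$). This is the same Rayleigh quotient as in~\eqref{rq}, restricted to the relevant symmetry class; standard arguments show that the minimum is attained and equals $\nu_{m,1}(W_i)$.

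The next step is to verify that $\varphi_2 \big|_{W_1}$ lies in the admissible class for the $W_1$ problem. This is immediate: restriction preserves membership in $H^1$, it preserves the angular dependence (axisymmetric, respectively $\cos\theta$), and when $m=0$ the orthogonality condition $\int_{F_1}\varphi_2 = \int_{F_2}\varphi_2 = 0$ is inherited automatically from $F_1 = F_2$. Substituting this trial function into the variational inequality yields
$$\nu_{m,1}(W_1) \;\le\; \frac{\int_{W_1}|\nabla\varphi_2|^2}{\int_{F_1}\varphi_2^2} \;\le\; \frac{\int_{W_2}|\nabla\varphi_2|^2}{\int_{F_2}\varphi_2^2} \;=\; \nu_{m,1}(W_2),$$
where the second inequality uses $W_1 \subset W_2$ together with $F_1 = F_2$.

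I do not anticipate any serious obstacle here: the argument is the standard ``restriction of the minimiser'' trick, and the only bookkeeping consists of checking that the restricted function remains in the correct symmetry-and-orthogonality class, which is automatic from $F_1 = F_2$. This is presumably also why the authors defer to the analogous computations carried out in detail in~\cite{BKPS2010}. Note that Lemma~\ref{monotonicity1}, being a Dirichlet-type comparison, is handled by the opposite manoeuvre (extension by zero rather than restriction), which explains the reversed direction of the inequality there.
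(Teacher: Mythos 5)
Your argument is correct and is exactly the standard restriction-of-the-minimiser proof that the paper has in mind when it omits the details and refers to the analogous Propositions 3.1.1 and 3.2.1 in \cite{BKPS2010}: restrict the $W_2$-eigenfunction in the appropriate symmetry class to $W_1$, note that $F_1=F_2$ keeps the denominator (and, for $m=0$, the orthogonality condition) unchanged while the Dirichlet integral can only decrease. Your closing remark about Lemma~\ref{monotonicity1} being handled by extension by zero, which reverses the inequality, is also the right explanation.
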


In the rest of this section $W$ denotes a liquid domain satisfying assumptions of Theorem \ref{th:mainconvex} and such that $F$ is the unit disk. By scaling it is sufficient to consider only such case.

One of the important tools in the proof of Theorem \ref{th:mainconvex} (i) is the \emph{Stokes stream function} $\Psi$ corresponding to $\vp$ (see \cite[p.~125--127]{L1932}). Suppose that $\varphi$ is an axisymmetric eigenfunction of the sloshing problem \eqref{lap}--\eqref{ort}. Then $\Psi$ is the axisymmetric continuous function defined on $\overline{W}$ by
\begin{equation}
\label{eq:ssf}
  \frac{\partial \vp}{\partial r} = - \frac{1}{r} \frac{\partial \ssf}{\partial y} , \qquad \frac{\partial \vp}{\partial y} = \frac{1}{r} \frac{\partial \ssf}{\partial r}.
\end{equation}
Here we use cylindrical coordinates (\ref{cylindrical}). Since $\vp$ satisfies Neumann boundary condition on $B$, $\Psi$ is constant on $B$. Note that formula~\eqref{eq:ssf} defines $\ssf$ uniquely up to a constant. Hence we may assume that $\ssf = 0$ on $B$. Furthermore, $\ssf = 0$ when $r = 0$, and $\ssf$ satisfies in $W$ the relation
\begin{align*}
  \frac{\partial^2 \ssf}{\partial r^2} - \frac{1}{r} \frac{\partial \ssf}{\partial r} + \frac{\partial^2 \ssf}{\partial y^2} & = 0 , && r \ne 0 .
\end{align*}
In particular, $\ssf$ attains its maximum and minimum on $\{ r = 0 \} \cup \partial W$. From the boundary conditions it follows that the extreme values of $\ssf$ are attained on $\overline{F}$.

In this section we will need the following result 
which follows by a classical Courant-Hilbert argument (we omit the standard proof).

\begin{lemma}
Any axisymmetric eigenfunction of the problem \eqref{lap}--\eqref{ort} corresponding to $\nu_{0,1}(W)$ has two nodal domains. 
\end{lemma}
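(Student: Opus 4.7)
My plan is to follow the classical Courant--Hilbert nodal domain argument, combined with the orthogonality condition \eqref{ort} that removes the trivial zero eigenvalue.

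For the lower bound on the number of nodal domains, I would observe that any eigenfunction $\vp$ corresponding to $\nu_{0,1}(W)$ lies in the admissible class for the variational principle \eqref{rq}, and in particular satisfies $\int_F \vp = 0$ by \eqref{ort}. Since $\vp$ is continuous on $\overline W$ and not identically zero, it must change sign on $F$, so it possesses at least two nodal domains in $W$.

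For the upper bound the key observation is that $\nu_{0,1}(W)$ is really the \emph{second} eigenvalue of the full axisymmetric sloshing problem (the $m = 0$ reduction \eqref{lapD}--\eqref{ncD}) once one drops the orthogonality condition \eqref{ortD}. Indeed, without that condition the constant function is an eigenfunction with eigenvalue $0$, and the next eigenvalue in the axisymmetric sequence is precisely $\nu_{0,1}(W)$. Courant's nodal domain theorem then yields at most two nodal domains. To execute the standard argument I would suppose for contradiction that $\vp$ has nodal domains $\Omega_1,\Omega_2,\Omega_3,\ldots$ with at least three of them, and set $\vp_i = \vp\,\I_{\Omega_i}$ for $i = 1,2$. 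Each $\vp_i$ lies in $H^1(W)$, and an integration by parts using $\Delta \vp = 0$ in $\Omega_i$, the Neumann condition \eqref{nc} on $\partial\Omega_i \cap B$, the Steklov condition \eqref{nu} on $\partial\Omega_i \cap F$, and $\vp = 0$ on the remainder of $\partial\Omega_i$, shows that $\vp_i$ realizes the Rayleigh quotient \eqref{rq} with value exactly $\nu_{0,1}(W)$. I would then pick a nontrivial axisymmetric combination $u = c_1 \vp_1 + c_2 \vp_2$ with $\int_F u = 0$; such a pair $(c_1,c_2)$ exists by elementary linear algebra. This $u$ is admissible in the variational principle defining $\nu_{0,1}(W)$ and attains its value, so $u$ is itself an eigenfunction for $\nu_{0,1}(W)$. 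But $u \equiv 0$ on $\Omega_3$, contradicting unique continuation for harmonic functions.

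The only delicate point will be to confirm that the Rayleigh-quotient computation and the eigenfunction characterization transfer cleanly to the mixed Steklov setting and stay within the axisymmetric class. This is classical within the variational framework already used in \cite{KK2001} and \cite{BKPS2010}, and interior unique continuation for harmonic functions is a standard tool, so I do not expect any genuine obstacle.
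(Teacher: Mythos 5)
Your argument is correct and is precisely the classical Courant--Hilbert argument that the paper itself invokes for this lemma (the paper omits the standard proof): the orthogonality condition \eqref{ort} forces a sign change on $F$, giving at least two nodal domains, while viewing $\nu_{0,1}(W)$ as the second eigenvalue of the axisymmetric problem with constants admitted yields at most two via the restriction-and-recombination argument plus unique continuation for harmonic functions. The remaining details you flag (that $\vp$ does not vanish identically on $F$, that nodal domains of an axisymmetric function are axisymmetric so $u$ stays in the admissible class, and that $\vp\,\I_{\Omega_i}\in H^1(W)$ with $\int_F \vp_i^2>0$) are standard parts of that classical argument and pose no real obstacle.
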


\begin{lemma}
\label{lem:ssf}
Suppose that  $\nu_{0,1}(W) < \tilde{\nu}_{0,1}(W)$. Then the Stokes stream function $\ssf$ corresponding to an axisymmetric eigenfunction $\vp$ with the eigenvalue $\nu_{0,1}(W)$ has constant sign on $W$.
\end{lemma}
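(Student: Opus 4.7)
I argue by contradiction: assuming $\ssf$ takes both positive and negative values in $D$, I will produce a proper subdomain $W_0 \subsetneq W$ on which the restriction $\vp|_{W_0}$ is a first Dirichlet--Steklov eigenfunction with eigenvalue $\nu_{0,1}(W)$, and then invoke the domain monotonicity of Lemma \ref{monotonicity1} to contradict the hypothesis.

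The first step is to transfer the interior sign change to a sign change on the free surface. Since $\ssf$ satisfies $L\ssf = 0$ in $D \subset \{r > 0\}$ and vanishes on $B(D) \cup R(D)$, and since the smoothness of $\vp$ together with the relations $\ssf_r = r\vp_y$, $\ssf_y = -r\vp_r$ gives $\ssf(r, y) = O(r^2)$ as $r \to 0^+$, a standard maximum principle for $L$ applied in $D \cap \{r > \varepsilon\}$ and then letting $\varepsilon \to 0^+$ shows that the extreme values of $\ssf$ are attained on $\overline{F(D)}$. In particular $\ssf$ changes sign on $F(D)$. Using the explicit formula $\ssf(r, 0) = \nu_{0,1}\,U(r)$ with $U(r) := \int_0^r s\vp(s, 0)\,ds$, together with $U(0) = 0$ and $U(1) = 0$ (the latter from the orthogonality $\int_F \vp = 0$), the function $U$ has an interior zero $r_0 \in (0, 1)$. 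Applying Rolle's theorem on $[0, r_0]$ and $[r_0, 1]$ (and using real-analytic unique continuation to rule out $U \equiv 0$ on either subinterval) produces two points $c_1 < c_2$ in $(0, 1)$ with $\vp(c_i, 0) = 0$. Consequently $\vp|_F$ has at least three alternating-sign annular regions.

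Next I exploit the previous lemma, which asserts that $\vp$ has exactly two nodal domains in $W$. A topological analysis of the nodal curve of $\vp$ in the cross-section $D$ then shows that the nodal curve emerging from $(c_1, 0)$ must close back to $(c_2, 0)$ inside $D$, bounding a region $D_0 \subsetneq D$ whose closure is disjoint from $B(D) \cup R(D)$. Indeed, if any branch of the nodal curve instead reached $B(D) \cup R(D)$, the cross-section $D$ would split into at least three connected sign components, each contributing a distinct axisymmetric nodal domain of $\vp$ in $W$, contradicting the two-nodal-domain property. Rotating $D_0$ around the $y$-axis produces a toroidal subdomain $W_0 \subsetneq W$ with boundary $\partial W_0 = F_0 \cup \Sigma_0$, where $F_0 := F \cap \partial W_0 \subsetneq F$ is an annular free-surface piece and $\Sigma_0 \subset W$ is the interior nodal surface on which $\vp = 0$.

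On $W_0$, $\vp$ has constant sign and satisfies the Dirichlet--Steklov problem $\Delta \vp = 0$ in $W_0$, $\vp_y = \nu_{0,1}\vp$ on $F_0$, $\vp = 0$ on $\Sigma_0$ with eigenvalue $\nu_{0,1}(W)$. As any first Dirichlet--Steklov eigenfunction is characterized by constant sign, this forces $\tilde\nu_{0,1}(W_0) = \nu_{0,1}(W)$. Lemma \ref{monotonicity1} applied to $W_0 \subsetneq W$ with $F_0 \subsetneq F$ yields $\tilde\nu_{0,1}(W_0) \geq \tilde\nu_{0,1}(W)$, and therefore $\nu_{0,1}(W) \geq \tilde\nu_{0,1}(W)$, contradicting the hypothesis $\nu_{0,1}(W) < \tilde\nu_{0,1}(W)$. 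Thus $\ssf$ has constant sign. The main obstacle I expect is making the topological step rigorous: one must analyze all possible configurations of the nodal curve of $\vp$ in $D$ (including branching at critical points, and Hopf-type considerations near $B(D)$), and verify in each case that either the enclosed region $D_0$ exists as described or a contradiction with the two-nodal-domain property arises.
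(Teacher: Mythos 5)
Your proposal follows essentially the same route as the paper's proof: use the maximum principle to push the sign change of $\ssf$ to the free surface, use the boundary condition $\frac{1}{r}\ssf_r = \nu_{0,1}\vp$ on $F$ together with Rolle's theorem to produce two zeros of $\vp$ on $F(D)$, invoke the two-nodal-domain lemma to isolate a nodal domain $W_0$ whose boundary meets $\partial W$ only inside $F$, and conclude via $\nu_{0,1}(W) = \tilde\nu_{0,1}(W_0) \ge \tilde\nu_{0,1}(W)$ from Lemma \ref{monotonicity1}. The only cosmetic differences are that you argue in the cross-section $D$ rather than directly in $W$, and that you spell out the topological step (which the paper compresses into the sentence ``Since $W_2$ is connected, $\partial W_1$ does not intersect $B$'') a bit more explicitly while flagging it as the delicate point; the paper is comparably brief there, so this is the same argument at the same level of rigour.
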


\begin{proof}
Suppose, contrary to the hypothesis, that $\ssf$ does not have constant sign. By the maximum principle, $\ssf$ must change sign on $F$. Since in cylindrical coordinates $(r, \theta, y)$ we also have $\ssf(0,\theta,0) = \ssf(1,\theta,0) = 0$, it follows that there exist $0 < r_1 < r_2 < 1$ such that $\pd[\ssf]{r}(r_1, \theta, 0) = \pd[\ssf]{r}(r_2, \theta, 0) = 0$. But on $F$ we have $\nu_{0,1}(W) \vp = \pd[\vp]{y} = \frac{1}{r} \pd[\ssf]{r}$. It follows that the set $\{\vp = 0\}$ intersects $F$ along at least two circles $r = r_1$ and $r = r_2$. Since $\vp$ has only two nodal domains $W_1$, $W_2$, one of them, say $W_1$, must touch $F$ at the annulus $r_1 < r < r_2$. Since $W_2$ is connected, we conclude that $\partial W_1$ does not intersect $B$. Hence, $\vp$ restricted to $W_1$ is the first eigenfunction of the spectral problem~\eqref{eq:lap2}--\eqref{eq:nc2} in $W_1$ with $F$ and $B$ replaced by $\relint (F \cap \partial W_1)$ and $(\partial W_1) \setminus {F}$. Hence $\nu_{0,1}(W) = \tilde{\nu}_{0,1}(W_1)$. Here $\relint (F \cap \partial W_1)$ denotes the relative interior of the set $F \cap \partial W_1$ in the plane $y = 0$.

By domain monotonicity, we have $\nu_{0,1}(W) = \tilde{\nu}_{0,1}(W_1) \ge \tilde{\nu}_{0,1}(W)$, a contradiction with the assumption $\nu_{0,1}(W) < \tilde{\nu}_{0,1}(W)$.
\end{proof}


\begin{lemma}
\label{lem:nu01}
Suppose that  $\nu_{0,1}(W) < \tilde{\nu}_{0,1}(W)$. If the stream function $\ssf$ corresponding to an axisymmetric eigenfunction $\vp$ satisfies $\pd{r}(\frac{1}{r} \pd[\ssf]{r}) \le 0$ on $F$, then $\vp$ corresponds to the first axisymmetric eigenvalue $\nu_{0,1}(W)$.
\end{lemma}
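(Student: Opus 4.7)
The plan is to argue by contradiction: assume the eigenvalue $\nu$ corresponding to $\vp$ is different from $\nu_{0,1}(W)$ and show $\vp \equiv 0$. Let $\vp_1$ be a nonzero axisymmetric eigenfunction with eigenvalue $\nu_{0,1}(W)$ and let $\ssf_1$ be its Stokes stream function. By Lemma~\ref{lem:ssf}, which applies because of the standing hypothesis $\nu_{0,1}(W) < \tilde{\nu}_{0,1}(W)$, $\ssf_1$ has constant sign; after replacing $\vp_1$ by $-\vp_1$ if needed I may assume $\ssf_1 \ge 0$ in $W$. Green's second identity applied to the pair of harmonic functions $\vp, \vp_1$, using the Neumann conditions on $B$ and the Steklov boundary conditions with parameters $\nu$ and $\nu_{0,1}$ on $F$, yields $(\nu - \nu_{0,1})\int_F \vp\,\vp_1\, dS = 0$, hence the $L^2(F)$-orthogonality $\int_F \vp\,\vp_1 \, dS = 0$.

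The next step is to rewrite this orthogonality in terms of the stream functions. On $F$ one has $\nu \vp = \tfrac{1}{r}\partial_r \ssf$ and $\nu_{0,1}\vp_1 = \tfrac{1}{r}\partial_r \ssf_1$; expanding the surface integral in cylindrical coordinates and cancelling the constant prefactor $2\pi/(\nu\nu_{0,1})$, the orthogonality becomes $\int_0^1 \tfrac{1}{r}\,\partial_r\ssf(r,0)\,\partial_r\ssf_1(r,0)\, dr = 0$. Integration by parts, with $\ssf_1$ taken as the factor to be differentiated, eliminates the boundary contributions: at $r=0$ the point lies on the symmetry axis where $\ssf_1 = 0$, and at $r=1$ the point lies on $B$ where again $\ssf_1 = 0$. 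What remains is
\[
\int_0^1 \ssf_1(r,0)\, \partial_r\!\left(\tfrac{1}{r}\partial_r\ssf\right)\!(r,0)\, dr \;=\; 0.
\]
By the hypothesis $\partial_r(\tfrac{1}{r}\partial_r\ssf) \le 0$ on $F$, and by construction $\ssf_1 \ge 0$ on $F$, so the integrand has constant sign and must vanish identically on $F$.

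The last step is to upgrade the pointwise identity $\ssf_1 \cdot \partial_r(\tfrac{1}{r}\partial_r\ssf) \equiv 0$ to $\partial_r(\tfrac{1}{r}\partial_r\ssf) \equiv 0$ on $F$. Otherwise $\partial_r(\tfrac{1}{r}\partial_r\ssf)$ is strictly negative on some open arc of $F$, forcing $\ssf_1 \equiv 0$ there. But $\ssf_1$ satisfies the elliptic equation $\partial_{rr}\ssf_1 - \tfrac{1}{r}\partial_r\ssf_1 + \partial_{yy}\ssf_1 = 0$ in $W$, together with the boundary relation $\partial_{yy}\ssf_1 = \nu_{0,1}\,\partial_y\ssf_1$ on $F$, obtained by differentiating the Steklov condition $\partial_y\vp_1 = \nu_{0,1}\vp_1$ in $y$ and using harmonicity of $\vp_1$. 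Vanishing of $\ssf_1$ on an open arc of $F$ makes all tangential derivatives vanish there, the equation then forces $\partial_{yy}\ssf_1 = 0$, and the boundary relation gives $\partial_y\ssf_1 = 0$; Holmgren uniqueness for the elliptic operator $\partial_{rr} - \tfrac{1}{r}\partial_r + \partial_{yy}$, which has real-analytic coefficients for $r > 0$, then produces $\ssf_1 \equiv 0$ in $W$, a contradiction. Hence $\tfrac{1}{r}\partial_r\ssf = \nu\vp$ is constant in $r$ on $F$, and the orthogonality condition~\eqref{ort} forces this constant to be zero, so $\vp \equiv 0$ and $\partial_y\vp = \nu\vp \equiv 0$ on $F$; Holmgren's theorem applied to the harmonic function $\vp$ with zero Cauchy data on the two-dimensional surface $F$ yields $\vp \equiv 0$ in $W$, contradicting that $\vp$ is an eigenfunction. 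The hardest step I anticipate is the unique continuation argument for $\ssf_1$: it requires enough regularity of $\ssf_1$ up to $F$ to differentiate twice there, and a clean derivation of the second-order boundary relation linking $\partial_{yy}\ssf_1$ and $\partial_y\ssf_1$; the orthogonality and integration-by-parts manipulations are routine.
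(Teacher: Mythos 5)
Your proof is logically sound in outline and shares the same core manipulation as the paper (rewriting $\int_F \vp\vp_1$ via stream functions, integrating by parts, and using the endpoint vanishing $\ssf_1(0,0)=\ssf_1(1,0)=0$), but it runs the argument as a contradiction rather than directly, and that choice forces you into two rounds of unique continuation that the paper avoids entirely. The paper's trick is to dispose of the degenerate case \emph{before} touching $\ssf_1$: if $\partial_r(\tfrac{1}{r}\partial_r\ssf) \equiv 0$ on $F$, then $\ssf(r,0)=C_1+C_2 r^2$, and since $\ssf$ vanishes at $r=0$ (axis) and at $r=1$ (where $F$ meets $B$), one gets $\ssf\equiv 0$ on $F$; the maximum principle for the stream-function operator then forces $\ssf\equiv 0$ in $W$, hence $\nabla\vp\equiv 0$ and $\vp\equiv 0$, a contradiction. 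Having ruled this out, $\partial_r(\tfrac{1}{r}\partial_r\ssf)<0$ on some open subinterval, so the integrated-by-parts expression is strictly negative (using $\ssf_1\geq 0$, $\ssf_1\not\equiv 0$) and $\int_F\vp\vp_{0,1}\neq 0$ directly. Notice that you actually rediscover this elementary route yourself at the very end---``$\tfrac{1}{r}\partial_r\ssf=\nu\vp$ constant in $r$ on $F$ plus \eqref{ort} forces $\vp\equiv 0$ on $F$''---but only after the detour; applying it up front makes both Holmgren invocations unnecessary.

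There is also a genuine flaw in one step as written: you claim the boundary identity $\partial_{yy}\ssf_1=\nu_{0,1}\partial_y\ssf_1$ on $F$ is ``obtained by differentiating the Steklov condition $\partial_y\vp_1=\nu_{0,1}\vp_1$ in $y$.'' A boundary condition on $\{y=0\}$ can only be differentiated tangentially (i.e., in $r$), not in the normal direction $y$, so that derivation is not legitimate. The identity happens to be true, but the correct derivation is: in $W$ one has $\partial_y\ssf_1=-r\partial_r\vp_1$, hence $\partial_{yy}\ssf_1=-r\,\partial_r\partial_y\vp_1$; on $F$, tangential differentiation of the Steklov condition gives $\partial_r\partial_y\vp_1=\nu_{0,1}\partial_r\vp_1$, which yields the claim. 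For the purpose you need it (showing $\partial_y\ssf_1=0$ on the arc where $\ssf_1\equiv 0$), an even shorter chain avoids the issue altogether: on such an arc $\partial_r\ssf_1=0$, so $\vp_1=\tfrac{1}{\nu_{0,1}r}\partial_r\ssf_1=0$, hence $\partial_r\vp_1=0$ tangentially, hence $\partial_y\ssf_1=-r\partial_r\vp_1=0$. If you insist on the unique-continuation route, you should also say explicitly that the operator $\partial_{rr}-\tfrac{1}{r}\partial_r+\partial_{yy}$ has analytic coefficients for $r>0$ and that $F$ is non-characteristic, and that unique continuation on a connected domain then follows from interior analyticity of solutions --- points you implicitly rely on but do not address.
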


\begin{proof}
Note that $\pd{r}(\frac{1}{r} \pd[\ssf]{r}) < 0$ at some point on $F$. Indeed, if $\pd{r}(\frac{1}{r} \pd[\ssf]{r})$ was identically zero on $F$, we would have $\ssf = C_1 + C_2 r^2$ on $F$, which contradicts $\ssf = 0$ for both $r = 0$ and $r = 1$.

Let $\nu$ be the eigenvalue corresponding to $\vp$. Let $\vp_{0,1}$ be an arbitrary axisymmetric eigenfunction corresponding to $\nu_{0,1}(W)$ and $\Psi_{0,1}$ be its stream function.
We have
\begin{equation*}
\int_F \vp \vp_{0,1} = \frac{1}{\nu \nu_{0,1}} \int_F \frac{\partial \vp}{\partial y} \frac{\partial \vp_{0,1}}{\partial y} = \frac{1}{\nu \nu_{0,1}} \int_F \frac{1}{r^2} \frac{\partial \ssf}{\partial r} \frac{\partial \ssf_{0,1}}{\partial r} .
\end{equation*}
Integration in polar coordinates and then integration by parts yield that
\begin{equation*}
\int_F \vp \vp_{0,1} = \frac{2 \pi}{\nu \nu_{0,1}} \int_0^1 \frac{1}{r} \frac{\partial \ssf}{\partial r} \frac{\partial \ssf_{0,1}}{\partial r}  \D r 
= -\frac{2 \pi}{\nu \nu_{0,1}} \int_0^1 \ssf_{0,1} \frac{\partial}{\partial r} \left(\frac{1}{r} \frac{\partial \ssf}{\partial r}\right)  \D r.
\end{equation*}
Since $\pd{r}(\frac{1}{r} \pd[\ssf]{r}) \le 0$ and it is not identically zero on $F$, and since $\ssf_{0,1}$ has constant sign on $F$ (by Lemma~\ref{lem:ssf}), we obtain that $\int_F \vp \vp_{0,1} \ne 0$. Hence $\vp$ corresponds to $\nu_{0,1}$.
\end{proof}

Now we come to the key element of the proof of Theorem \ref{th:mainconvex} (i), the application of the inverse method from Troesch paper \cite{T1960}.
In \cite[p.~283--284]{T1960} an eigenvalue $\nu$ corresponding to an axisymmetric eigenfunction for the sloshing problem is computed for a family of domains. With the notation of \cite{T1960}, we take $a_2 = 4$, so that the free surface is a unit disc. When $\lambda \in (0, 2]$, then for the domain
\begin{equation*}
 W_\lambda = \{ (x, y, z) : x^2 + z^2 < 4 y^2 + 8 y/\lambda + 1 , \, y < 0 \}
\end{equation*}
we simply have $\nu = \lambda$; the corresponding eigenfunction is the polynomial $\vp = 1 + \lambda y + 4 y^2 - 2 r^2 + (4/3) \lambda y^3 - 2 \lambda r^2 y$, where $r^2 = x^2 + z^2$, and the stream function corresponding to $\vp$ is $\ssf = (\lambda/2) (r^2 - r^4) + 4 r^2 y + 2 \lambda r^2 y^2$. Since $\pd{r}(\frac{1}{r} \pd[\ssf]{r}) \le 0$ on $F$, we may apply Lemma~\ref{lem:nu01} an we obtain the following corollary.

\begin{corollary}
\label{cor:Troesch}
For the set $W_\lambda$, either $\nu_{0,1}(W_\lambda) \ge \tilde{\nu}_{0,1}(W_\lambda)$ or the Troesch eigenvalue $\nu = \lambda$ is equal to $\nu_{0,1}(W_\lambda)$.
\end{corollary}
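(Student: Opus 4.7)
The statement will follow by a direct application of Lemma~\ref{lem:nu01} to the explicit axisymmetric eigenpair $(\lambda, \vp)$ supplied by Troesch's inverse construction. My plan is therefore to verify the hypotheses of that lemma on the given explicit formulas and then split into the two cases of the dichotomy.

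First I would confirm that $\vp = 1 + \lambda y + 4y^2 - 2r^2 + (4/3)\lambda y^3 - 2\lambda r^2 y$ really is an axisymmetric sloshing eigenfunction on $W_\lambda$ with eigenvalue $\lambda$. Harmonicity is immediate from $\pdd[\vp]{r} + (1/r)\pd[\vp]{r} = -8 - 8\lambda y = -\pdd[\vp]{y}$. On the free surface $F$ one has $\vp|_{y=0} = 1 - 2r^2$ and $\pd[\vp]{y}|_{y=0} = \lambda(1 - 2r^2)$, which is the Steklov condition with $\nu = \lambda$. For the Neumann condition on $B$, let $h(r,y) = r^2 - 4y^2 - 8y/\lambda - 1$ be the defining function of $\partial W_\lambda$; a direct expansion in the $(r,y)$ variables gives $\nabla\vp \cdot \nabla h = 8(1+\lambda y)\,h(r,y)$, which vanishes on $B = \{h = 0\}$ and hence implies $\partial\vp/\partial n = 0$ there. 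Next I would verify the stream-function formula: from $\ssf = (\lambda/2)(r^2 - r^4) + 4 r^2 y + 2\lambda r^2 y^2$ one computes $(1/r)\pd[\ssf]{y} = 4r + 4\lambda r y = -\pd[\vp]{r}$ and $(1/r)\pd[\ssf]{r} = \lambda - 2\lambda r^2 + 8y + 4\lambda y^2 = \pd[\vp]{y}$, so the defining relations~\eqref{eq:ssf} hold. The crucial sign check is then immediate: on $F$ one has $\pd{r}\bigl((1/r)\pd[\ssf]{r}\bigr) = -4\lambda r \le 0$ for $r \ge 0$ and $\lambda > 0$.

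With these verifications in hand the corollary follows by a one-line case split. If $\nu_{0,1}(W_\lambda) \ge \tilde{\nu}_{0,1}(W_\lambda)$ the first alternative holds and there is nothing to prove. Otherwise $\nu_{0,1}(W_\lambda) < \tilde{\nu}_{0,1}(W_\lambda)$; all hypotheses of Lemma~\ref{lem:nu01} are satisfied, and that lemma gives that $\vp$ corresponds to $\nu_{0,1}(W_\lambda)$. Since the eigenvalue of $\vp$ equals $\lambda$, this yields $\lambda = \nu_{0,1}(W_\lambda)$, the second alternative. There is no substantive obstacle -- the whole argument reduces to a routine polynomial computation on the Troesch family combined with the already-established Lemma~\ref{lem:nu01}; the only conceptual point worth flagging is that the $\le 0$ sign of $\pd{r}((1/r)\pd[\ssf]{r})$ on $F$ is essential, and it holds precisely because the construction fixes the free-surface radius to $1$, so that the leading $r$-dependence of $\ssf$ on $F$ is the concave quartic $(\lambda/2)(r^2 - r^4)$.
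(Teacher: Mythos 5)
Your proposal is correct and follows essentially the same route as the paper: the paper likewise takes the Troesch eigenpair and its stream function, notes that $\frac{\partial}{\partial r}\bigl(\frac{1}{r}\frac{\partial \Psi}{\partial r}\bigr)\le 0$ on $F$, and applies Lemma~\ref{lem:nu01}, with the dichotomy coming from that lemma's hypothesis $\nu_{0,1}(W_\lambda)<\tilde{\nu}_{0,1}(W_\lambda)$. Your explicit verifications (harmonicity, the Steklov and Neumann conditions, and the relations \eqref{eq:ssf}) merely fill in computations the paper imports from \cite{T1960}, and they check out.
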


It is a natural conjecture that in fact always $\nu = \nu_{0,1}(W)$. However, we were not able to prove it. For our purposes the above corollary is enough.

In the proof of Theorem \ref{th:mainconvex} (i) we need some knowledge about sloshing eigenvalues for cylinders. These eigenvalues are well known (see e.g. \cite[example 2.1, p.~24]{BKPS2010}). We collect some results about these eigenvalues which we need in this section in the following lemma.

\begin{lemma}
\label{lem:eigencyl}
Let $h > 0$, $U_h  = \{(x,y,z): \, x^2 + z^2 < 1, \, -h < y < 0\}$, $F = \{(x,y,z): \, x^2 + z^2 < 1, \, y = 0\}$ and $B = \partial U_h \setminus {F}$. Let us consider the sloshing problem \eqref{lap}--\eqref{ort} in the cylinder $U_h$. Then we have
\begin{equation*}
\nu_{1,1}(U_h) = j'_{1,1} \tanh(j'_{1,1} h), \quad \quad 
\tilde{\nu}_{0,1}(U_h) = j_{0,1} \coth(j_{0,1} h),
\end{equation*}
where $j'_{1,1} \approx 1.8412$ is the first positive zero of $J'_{1}$ and $j_{0,1} \approx 2.4048$ is the first positive zero of $J_0$. Here $J_0$ and $J_1$ are Bessel functions of the first kind.

For any $h > 0$ we have $\nu_{1,1}(U_h) < j'_{1,1}$ and $\tilde{\nu}_{0,1}(U_h) > j_{0,1}$.
\end{lemma}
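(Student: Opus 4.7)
The plan is to compute the eigenfunctions and eigenvalues by separation of variables in cylindrical coordinates, and then to derive the inequalities from elementary properties of $\tanh$ and $\coth$.

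First I would apply the ansatz $\vp(r,\theta,y) = R(r) Y(y) \cos(m\theta)$ (or with $\sin$) to the Laplace equation in $U_h$. Separating variables gives the Bessel equation $R'' + r^{-1} R' + (k^2 - m^2/r^2) R = 0$ and $Y'' = k^2 Y$, where $k > 0$ is the separation constant. Boundedness near $r=0$ forces $R(r) = J_m(kr)$. On the lateral wall $r=1$ the appropriate boundary condition determines the admissible values of $k$: for the sloshing problem, the Neumann condition $\partial_r \psi = 0$ gives $J'_m(k) = 0$, so $k$ is a positive zero of $J'_m$; for the Dirichlet--Steklov problem, $\psi=0$ forces $k$ to be a positive zero of $J_m$.

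Next I would handle the $y$-dependence. The condition on the bottom $y=-h$ fixes $Y$ up to a constant: for sloshing, $Y'(-h)=0$ yields $Y(y) = \cosh(k(y+h))$, whereas for Dirichlet--Steklov, $Y(-h)=0$ yields $Y(y) = \sinh(k(y+h))$. Plugging these into the Steklov condition $Y'(0) = \nu Y(0)$ gives
\begin{equation*}
  \nu_{m,n}(U_h) = k \tanh(k h), \quad k=j'_{m,n}, \qquad \widetilde{\nu}_{m,n}(U_h) = k \coth(k h), \quad k=j_{m,n}.
\end{equation*}
To identify the fundamental eigenvalues, I would invoke that every eigenfunction of either problem admits an expansion in the Fourier modes $\cos(m\theta)$, $\sin(m\theta)$, and that for each fixed $m$ the $y$-separated eigenfunctions above form a complete system (this reduces to completeness of the eigenfunctions of a regular Sturm--Liouville problem on $(-h,0)$, after expansion of the radial part in the basis $\{J_m(j'_{m,n}r)\}$ or $\{J_m(j_{m,n}r)\}$). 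Taking the minimum over admissible $k$ for $m=1$ (sloshing) and $m=0$ (Dirichlet--Steklov) gives the stated formulas. For the sloshing problem at $m=0$ the orthogonality condition \eqref{ort} removes the trivial $k=0$ mode, so the smallest positive $k$ remains $j'_{0,1}$; this is needed only for completeness and does not affect the $m=1$ formula.

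Finally the inequalities follow immediately: since $0 < \tanh(x) < 1$ for all $x>0$, we get $\nu_{1,1}(U_h) = j'_{1,1}\tanh(j'_{1,1}h) < j'_{1,1}$, and since $\coth(x)>1$ for all $x>0$, we get $\widetilde{\nu}_{0,1}(U_h) = j_{0,1}\coth(j_{0,1}h) > j_{0,1}$. The main (and essentially the only) obstacle is a careful verification of completeness of the separated eigenfunctions, so that no spurious smaller eigenvalue exists; this is entirely standard and handled, e.g., by the cited reference, so the presentation can be kept brief.
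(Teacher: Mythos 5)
Your proposal is correct and is exactly the standard separation-of-variables computation. The paper itself gives no proof of this lemma; it simply cites a reference (\cite[example~2.1, p.~24]{BKPS2010}) as these cylinder eigenvalues are classical, so your argument fills in what the paper delegates. Two small things you might tighten: first, to identify $\nu_{1,1}$ and $\tilde{\nu}_{0,1}$ as the \emph{smallest} eigenvalues within the $m=1$ and $m=0$ families respectively, you need that $k \mapsto k\tanh(kh)$ and $k \mapsto k\coth(kh)$ are strictly increasing on $(0,\infty)$; this is an easy derivative computation (e.g.\ for the second one it reduces to $\tfrac12\sinh(2u) > u$), but it is what lets you conclude the minimum is attained at $k = j'_{1,1}$ and $k = j_{0,1}$, rather than at some larger root. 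Second, the completeness step you flag is indeed the only nontrivial point: the separated solutions restricted to $F$ give the family $\{J_m(j'_{m,n} r)\cos(m\theta),\ J_m(j'_{m,n} r)\sin(m\theta)\}$ (resp.\ with $j_{m,n}$), which is a complete orthogonal system in $L^2(F)$, so these exhaust the eigenfunctions and no spurious eigenvalue can be missed; you correctly identify this as standard.
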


\begin{proof}[Proof of Theorem \ref{th:mainconvex} (i)]
Let us recall that we assume that $W$ is a liquid domain satisfying assumptions of Theorem 1.1 and $F = \{(x,y,z): x^2 + z^2 < 1, \, y = 0\}$.

First note that any eigenfunction of \eqref{lap}--\eqref{ort} corresponding to $\nu_1(W)$ must have 2 nodal domains. Eigenfunctions of \eqref{lap}--\eqref{ort} corresponding to $\nu_{m,k}(W)$, $m \ge 2$, of the shape $\psi_{m,k}(r,y) \cos(m \theta)$, $\psi_{m,k}(r,y) \sin(m \theta)$, have at least 4 nodal domains, so $\nu_{m,k}(W)$ cannot be equal to $\nu_1(W)$ for $m \ge 2$. Recall that $\nu_{1,2}(W) > \nu_{1,1}(W)$. Hence, in order to show that $\nu_1(W) = \nu_{1,1}(W)$, we only need to prove that $\nu_{0,1}(W) > \nu_{1,1}(W)$. 

Note that for $\lambda \in (0,2]$ we have $4 y^2 + 8 y/\lambda + 1 \le (1 + 4 y / \lambda)^2$, so that $W_\lambda$ is contained in the circular cone
\begin{equation*}
 V_\lambda = \{ (x, y, z) : x^2 + z^2 < (1 + 4 y / \lambda)^2 , \, y < 0 \} .
\end{equation*}
For $\lambda = 2$, in fact, $W_\lambda = V_\lambda$. The height of $V_\lambda$ is equal to $\lambda / 4$. Hence, $U_{\lambda/4}$ is the smallest vertical cylinder containing $V_\lambda$. By Lemma \ref{lem:eigencyl} $\nu_{1,1}(U_{\lambda/4}) = j'_{1,1} \tanh(j'_{1,1} \lambda / 4)$ and $\tilde{\nu}_{0,1}(U_{\lambda/4}) = j_{0,1} \coth(j_{0,1} \lambda / 4)$.

Let $h$ be the height of $W$. When $h \le 1/2$, then for $\lambda = 4 h$ we have $W_\lambda \subseteq V_\lambda \subseteq W \subseteq  U_{\lambda/4}$. There are two possibilities.
\begin{enumerate}
\item[(a)]
If $\nu_{0,1}(W_\lambda) \ge \tilde{\nu}_{0,1}(W_\lambda)$, then, by domain monotonicity and Lemma \ref{lem:eigencyl} we have $\nu_{0,1}(W_\lambda) \ge \tilde{\nu}_{0,1}(U_{\lambda/4}) > \nu_{1,1}(U_{\lambda/4})$.
\item[(b)]
Otherwise, by Corollary \ref{cor:Troesch} we have $\nu_{0,1}(W_\lambda) = \lambda$, and since $\lambda > (j'_{1,1})^2 \lambda / 4 > j'_{1,1} \tanh(j'_{1,1} \lambda/4) = \nu_{1,1}(U_{\lambda/4})$, we also have $\nu_{0,1}(W_\lambda) > \nu_{1,1}(U_{\lambda/4})$.
\end{enumerate}
Hence, in both cases, we have $\nu_{0,1}(W) \ge \nu_{0,1}(W_\lambda) > \nu_{1,1}(U_{\lambda/4}) \ge \nu_{1,1}(W)$, as desired.

When the height $h$ of $W$ is larger then $1/2$, then we simply have $W_2 = V_2 \subseteq W \subseteq U_h$. Again, there are two possibilities.
\begin{enumerate}
\item[(a)]
If $\nu_{0,1}(W_2) \ge \tilde{\nu}_{0,1}(W_2)$, then by domain monotonicity and  Lemma \ref{lem:eigencyl} we have $\nu_{0,1}(W_2) \ge \tilde{\nu}_{0,1}(W_2) \ge \tilde{\nu}_{0,1}(U_2) > j_{0,1} > j'_{1,1}$.
\item[(b)]
Otherwise, we have $\nu_{0,1}(W_2) = 2 > j'_{1,1}$.
\end{enumerate}
As before, in both cases we have, by domain monotonicity and Lemma \ref{lem:eigencyl}, $\nu_{0,1}(W) \ge \nu_{0,1}(W_2) > j'_{1,1} \ge \nu_{1,1}(U_h) \ge \nu_{1,1}(W)$, which completes the proof of the inequality $\nu_{0,1}(W) > \nu_{1,1}(W)$.

Hence $\nu_1(W) = \nu_{1,1}(W)$. There are exactly 2 linearly independent eigenfunctions corresponding to $\nu_1(W) = \nu_{1,1}(W)$: $\psi_{1,1}(r,y) \cos(\theta)$, $\psi_{1,1}(r,y) \sin(\theta)$. Hence $\nu_1(W)$ has multiplicity 2.
\end{proof}

\section{Continuous dependence under variation of the domain}

In this section we first define a new class of domains $\calW$ and formulate monotonicity properties of $\psi_{1,1}$ for this class. Then we prove continuos dependence of $\nu_{1,1}$ and $\psi_{1,1}$ under certain variations of the domain. Ideas used in this section are similar to the ideas from Section 2 in \cite{JN2000}.

Let us first describe the class $\calW$ in an informal way. The class $\calW$ consists of axially symmetric domains $W$ with horizontal cross-sections being circles or radius decreasing with depth $-y$. A similar condition for two-dimensional domains was assumed in Theorem~1.1 in \cite{JN2000}. For technical reasons we will assume certain regularity near the free surface and the vertical axis. More formally, we first describe the class $\calD$ of cross-sections.

\begin{definition}
\label{classD}
A domain $D \subset \{(r,y) : r > 0, \, y < 0\}$ belongs to the class of domains $\calD$ iff its boundary consists of the following 3 parts (see Figure~\ref{fig:2}):
\begin{enumerate}
\item[(i)] the horizontal interval $F(D) = \{(r,y): \, r \in [0,r_0), \, y = 0\}$, where $r_0 > 0$;
\item[(ii)] the vertical interval $R(D) = \{(r,y): \, r = 0, \, y \in (y_0,0]\}$, where $y_0 < 0$;
\item[(iii)] $B(D)$, parametrized by a simple continuous curve $(r(t), y(t))$, $t \in [0,T]$, satisfying the following conditions:
\begin{enumerate}
\item $(r(0), y(0)) = (0,y_0)$, $(r(T), y(T)) = (r_0,0)$, and $r(t) > 0$ and $y(t) < 0$ are nondecreasing for $t \in [0,T]$,
\item there exist $\varepsilon > 0$ ($\varepsilon \le T/2$) and $M \ge 1$ such that for $t \in [0,\varepsilon]$, $r(t) = t$ and $y(t)$ is a Lipschitz function with Lipschitz constant $M$, and for $t \in [T-\varepsilon,T]$, $y(t) = t - T$ and $r(t)$ is a Lipschitz function on $[T -\varepsilon,T]$ with Lipschitz constant $M$.
\end{enumerate}
\end{enumerate}
For fixed $\varepsilon > 0$, $M \ge 1$, $H > \ve$, $r_0 > \ve$, we write $D \in \calD(\varepsilon,M,H,r_0)$ when the above relations hold with the prescribed $\ve$, $M$ and $r_0$, and for some $y_0 \in (-H,-\ve)$. 
\end{definition}

\begin{figure}
\centering
\includegraphics[scale=0.8]{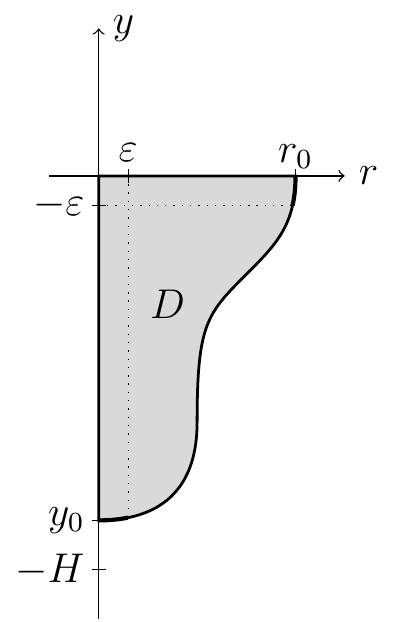}
\caption{An example of a domain $D$ belonging to the class $\calD$.}
\label{fig:2}
\end{figure}

\begin{definition}
The domain $W \subset \{(x,y,z): \, x,z \in \R, y < 0\}$ belongs to {\it{the class of domains $\calW$}} iff 
\[
W = W(D) = \{(x,y,z) \in \R^3: \, (\sqrt{x^2 + z^2},y) \in D \cup R(D)\}
\]
for some $D \in \calD$.

For $W = W(D) \in \calW$ we will always assume that its boundary $\partial{W}$ consists of 2 nonempty parts $F$, $B$ with $B = \partial{W} \setminus F$ and $F =  \{(x,y,z): x^2 + z^2 < r_0^2, \, \, y = 0\}$ where $r_0$ is the number appearing in the definition of $D$. 

By $\calW(\varepsilon,M,H,r_0)$ ($\varepsilon > 0$, $M \ge 1$, $H > \ve$, $r_0 > \ve$) we denote these domains $W = W(D)$ from $\calW$ for which $D \in \calD(\varepsilon,M,H,r_0)$.
\end{definition}

Note that all bounded, convex, axisymmetric domains satisfying the John condition belong to $\calW$. We are able to prove the monotonicity of $\psi_{1,1}$ stated in Theorem~\ref{th:mainconvex} for all $W \in \calW$.

\begin{theorem}
\label{th:oddmon}
Let $D \in \calD$, $W = W(D) \in \calW$. Let $\psi_{1,1}$ be the eigenfunction of \eqref{lapD}--\eqref{ortD} corresponding to the eigenvalue $\nu_{1,1}$ for the domain $W = W(D)$. After multiplication by $\pm 1$ we may assume that $\psi_{1,1} > 0$ on $D$. Then we have
\begin{align}
\label{eqmain1}
& \frac{\partial \psi_{1,1}}{\partial r} > 0 && \text{in} \  D, \\
\label{eqmain2}
& \frac{\partial \psi_{1,1}}{\partial y} > 0 && \text{in} \  D.
\end{align}
\end{theorem}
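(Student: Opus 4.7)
The plan is to adapt the Jerison–Nadirashvili deformation method from \cite{JN2000} in two stages: first prove \eqref{eqmain1}--\eqref{eqmain2} for a dense subclass of piecewise smooth domains in $\calD$ (the content of Section 4), then pass to the limit using the continuous dependence of $\nu_{1,1}$ and $\psi_{1,1}$ that this section will establish (Section 5). It is convenient to work in Cartesian coordinates via $\vp(x,y,z)=\psi_{1,1}(r,y)\cos\theta$, since
\[
\frac{\partial\vp}{\partial x}=\cos^2\theta\,\frac{\partial\psi_{1,1}}{\partial r}+\frac{\sin^2\theta}{r}\,\psi_{1,1},\qquad\frac{\partial\vp}{\partial y}=\cos\theta\,\frac{\partial\psi_{1,1}}{\partial y}.
\]
Given $\psi_{1,1}>0$ on $D$, \eqref{eqmain1} is equivalent to $\partial_x\vp>0$ on $W_+$ and \eqref{eqmain2} to $\partial_y\vp>0$ on $W_+$, so both $v:=\partial_x\vp$ and $w:=\partial_y\vp$ are harmonic functions in $W$ whose positivity on $W_+$ is to be established; this places the problem squarely in the hot-spots framework.

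For the deformation step on piecewise smooth $D$, I would construct a smooth family $\{D_s\}_{s\in[0,1]}$ lying in a fixed class $\calD(\ve,M,H,r_0)$ that joins $D_1=D$ to a reference truncated cylinder cross-section $D_0$, on which the explicit eigenfunction $\psi_{1,1}^{(0)}(r,y)=J_1(j'_{1,1}r/r_0)\cosh(j'_{1,1}(y+h)/r_0)$ makes both monotonicity claims evident. Define
\[
I=\bigl\{s\in[0,1]:\partial_r\psi_{1,1}^{(s)}>0\text{ and }\partial_y\psi_{1,1}^{(s)}>0\text{ on }D_s\bigr\}.
\]
Then $0\in I$, and openness of $I$ follows from strict positivity combined with the $C^1_{\mathrm{loc}}$ continuous dependence proved in this section, together with uniform behavior near $R(D_s)$, $F(D_s)$, and $B(D_s)$ provided by the parameters of $\calD(\ve,M,H,r_0)$. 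The argument then reduces to showing that $I$ is closed.

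Closedness is the main obstacle. At a candidate first failure parameter $s_*\in\partial I$, the harmonic functions $v$ and $w$ are nonnegative on $W_{s_*,+}$, and the task is to exclude a zero. An interior zero contradicts the strong maximum principle. A zero of $v$ on the symmetry plane $\{x=0\}\cap W_{s_*,+}$ is ruled out because $v$ is even in $x$, so such a point is a local minimum of a harmonic function. On the smooth part of $B\cap\overline{W_{s_*,+}}$, differentiating the Neumann condition $\partial_n\vp=0$ in $x$ yields an oblique-derivative condition $\partial_n v=-\nabla\vp\cdot\partial_xn$, whose sign is pinned down by the radius-monotonicity built into Definition \ref{classD}; the Hopf lemma then excludes a boundary zero. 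On $F\cap\overline{W_{s_*,+}}$ the Steklov condition differentiates to $\partial_y v=\nu_{1,1} v\geq 0$, again amenable to Hopf. The delicate cases are the corner $F(D)\cap B(D)$ and the axis $R(D)$, where local barriers built from the uniform Lipschitz constants of $\calD(\ve,M,H,r_0)$ are required; the analogous analysis handles $w$.

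Once $I=[0,1]$ is established for the piecewise smooth subclass, the general case is obtained by approximating an arbitrary $D\in\calD$ by piecewise smooth $D_n\in\calD(\ve,M,H,r_0)$ converging to $D$ in the Hausdorff sense, invoking the $C^1_{\mathrm{loc}}$ continuous dependence to pass the inequalities $\partial_r\psi_{1,1}^{(D_n)}>0$ and $\partial_y\psi_{1,1}^{(D_n)}>0$ to their non-strict versions on $D$, and finally upgrading these to strict inequalities by a last application of the strong maximum principle to the harmonic extensions $v$ and $w$ on $W(D)$. The principal difficulty throughout is the boundary analysis in the closedness step, where the decreasing-radius structure inherent in $\calD$ is essential --- which is why the class $\calW$ cannot be significantly enlarged, as the counterexample in Proposition \ref{notJohn} confirms.
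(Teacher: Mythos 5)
Your skeleton (deform a piecewise smooth domain to a cylinder by a continuity method, then approximate a general $D\in\calD$) is the same as the paper's, but the core of your continuity argument has a genuine gap. You define $I$ by \emph{strict} inequalities and claim openness from $C^1_{\mathrm{loc}}$ dependence plus ``uniform behavior'' near $R(D_s)$, $F(D_s)$, $B(D_s)$. This cannot work as stated: $\psi_{1,1}$ is the $m=1$ mode, so it vanishes identically on the axis $R(D_s)$, hence $\partial\psi_{1,1}/\partial y\equiv 0$ there and no positive lower bound for the derivatives is available near the axis or the corners; locally uniform $C^1$ closeness therefore cannot propagate strict positivity, and the class parameters of $\calD(\ve,M,H,r_0)$ do not supply the missing quantitative control. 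The paper is structured precisely to avoid this: Lemmas \ref{lem:B2y} and \ref{lem:B2x} reduce everything to the single condition $\partial\psi_{1,1}/\partial y\ge 0$ on the curved piece $B_2(D_s)$, which, for the specific deformation $g_s(y)=1-s+s\,g(y)$, stays inside a fixed rectangle $R$ away from the axis, the free surface and the corners; there the uniform-in-$s$ boundary estimates of Lemma \ref{lem:uniformnorm} (boundary straightening plus \cite[Lemma 6.29]{GT1977}) give genuine uniform convergence of $\partial\psi_s/\partial y$ on $\overline{D_s\cap D_q\cap R}$ (Lemma \ref{lem:limit}), and the infimum parameter $q$ is dispatched by sending failure points $p_n\in B_2(D_{s_n})$ to a limit $p_0\in\overline{B_2(D_q)}$ and contradicting the strict Hopf statement of Lemma \ref{lem:yderivative}. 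Your ``local barriers at the corner and the axis'' are exactly the steps that would need a proof; in the paper's setup they are never needed because the deformation keeps the only nontrivial boundary piece away from those sets, while on the flat and vertical pieces the relevant derivative signs are immediate.

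The second genuine gap is your treatment of $v=\partial\vp/\partial x$ on the boundary. Differentiating the Neumann condition on the curved bottom does not give an oblique-derivative condition for $v$ whose sign is ``pinned down by radius monotonicity'': the identity involves second derivatives of $\vp$ and the variation of the normal, and no pointwise sign is available; on $F$ the relation $\partial_y v=\nu v$ is useful only if one already knows the sign of $v$ on $F$, which is essentially the high-spots statement itself. This is why the paper, following \cite{JN2000}, does not argue for $\partial\vp/\partial x$ by Hopf at all: Lemma \ref{lem:B2x} takes a hypothetical component $V_+$ of $\{\partial\vp/\partial x<0\}$, reflects it oddly in $x$, and shows via Green's formula that the resulting test function would attain the Rayleigh quotient $\nu_W$, contradicting the strict variational inequality of Lemma \ref{lem:nablauWF}; without this (or a substitute) your closedness analysis for $v$ does not go through. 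Finally, for the passage to general $D\in\calD$, the available continuity (Lemma \ref{familyWt}) is only locally uniform convergence on compact subsets, and the paper passes the integrated monotonicity inequality \eqref{eq:in} to the limit through a chain of intermediate classes (Steps 2--6) that keeps all approximants in a fixed $\calD(\ve,M,H,1)$ with the flat-bottom/vertical-wall structure required by Section 4, upgrading to strict derivative inequalities only at the very end by harmonicity; your one-step Hausdorff approximation with claimed $C^1_{\mathrm{loc}}$ dependence glosses over these constraints.
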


In order to use methods from \cite{JN2000} we need to introduce another class of domains. This class may be briefly desribed as star-shaped Lipschitz domains (cf. definition of class $L_M$ in \cite[p.~744]{JN2000}).
\begin{definition}
\label{classA}
Let $p_0 \in \R^3$. By $\calA(p_0)$ we denote the class of domains $W \subset \R^3$ such that 
\[
W = \left\{ p \in \R^3: \, 0 < |p - p_0| < f \left( \frac{p - p_0}{|p - p_0|}\right) \right\} 
\cup \left\{ p_0 \right\}
\]
for some positive Lipschitz function $f$ on the unit sphere $S^2 \subset \R^3$. If $0 < r_1 < r_2$ and $\eta > 0$, then by $\calA(p_0,r_1,r_2,\eta)$ we denote the subclass of $\calA(p_0)$ for which $f$ is a Lipschitz function with Lipschitz constant $\eta$ such that $r_1 < f < r_2$.
\end{definition}

\begin{lemma}
\label{covering}
Let $p_0 \in \R^3$. If $W \in \calA(p_0)$, then $W$ is a Lipschitz domain; that is, there exist constants $\delta > 0$, $N \in \N$, $L > 0$ such that the boundary $\partial{W}$ may be covered by balls $B_i$, $i = 1, \ldots, N$ of radii $\delta$, and such that for each $i$, $B_i \cap \partial{W}$ is the graph of a Lipschitz function with Lipschitz constant $L$. When $0 < r_1 < r_2$, $\eta > 0$ and $W \in \calA(p_0,r_1,r_2,\eta)$, then $\delta$, $N$ and $L$ depend only on $r_1, r_2, \eta$. 
\end{lemma}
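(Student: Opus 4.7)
The plan is to use the radial representation $\partial W = \{p_0 + f(\omega)\omega : \omega \in S^2\}$ to express the boundary locally as a Lipschitz graph in coordinates aligned with the outward radial direction at the reference point. Without loss of generality, set $p_0 = 0$. For an arbitrary boundary point $q = f(\omega_0)\omega_0$, I fix orthonormal coordinates $(e_1, e_2, e_3)$ with $e_3 = \omega_0$ and parametrize nearby boundary points by spherical angles $(\alpha, \beta)$ so that $\omega = (\sin\alpha\cos\beta, \sin\alpha\sin\beta, \cos\alpha)$ and
\[
P(\alpha, \beta) = f(\omega)\omega = \bigl(f(\omega)\sin\alpha\cos\beta,\; f(\omega)\sin\alpha\sin\beta,\; f(\omega)\cos\alpha\bigr) = (X, Y, Z).
\]
The goal is to invert $(\alpha, \beta) \mapsto (X, Y)$ on some explicit neighborhood of $\omega_0$ and then show that $Z$ is a Lipschitz function of $(X, Y)$ with constants that depend only on $r_1, r_2, \eta$.

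The key quantitative step lies in inverting the radial piece. The angle $\beta$ is simply the argument of $X + iY$, so only $\rho(\alpha) := f(\omega(\alpha,\beta))\sin\alpha$ needs to be inverted in $\alpha$. For $\alpha_1 < \alpha_2$ both below a threshold $\alpha^*$ to be fixed, I would estimate
\[
\rho(\alpha_2) - \rho(\alpha_1) \ge r_1(\sin\alpha_2 - \sin\alpha_1) - \eta\,|\omega(\alpha_2,\beta) - \omega(\alpha_1,\beta)|\,\sin\alpha_1,
\]
and using $|\omega(\alpha_2,\beta) - \omega(\alpha_1,\beta)| \le \alpha_2 - \alpha_1$ together with elementary estimates for $\sin\alpha_2 - \sin\alpha_1$, I would choose $\alpha^*$ of order $r_1/\eta$ to obtain bi-Lipschitz bounds for $\alpha \mapsto \rho$ with constants depending only on $r_1, r_2, \eta$. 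Hence $\alpha$ is recoverable from $\rho = \sqrt{X^2+Y^2}$, and substituting into $Z = f(\omega)\cos\alpha$ and again invoking the Lipschitz property of $f$ gives a bound of the form
\[
|Z(\alpha_2,\beta_2) - Z(\alpha_1,\beta_1)| \le L\sqrt{(X_2-X_1)^2 + (Y_2-Y_1)^2}
\]
with $L$ depending only on $r_1, r_2, \eta$. Since $W$ is the radial region $\{r\omega : 0 \le r < f(\omega)\}$, the domain lies on the origin side of this graph, so the local description is exactly that of a Lipschitz domain.

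Finally, I would extract the finite cover by compactness. Each $q \in \partial W$ admits a ball $B(q,\delta)$ of uniform radius $\delta > 0$ (depending only on $\alpha^*$, hence on $r_1, r_2, \eta$) on which the graph representation above holds with the uniform Lipschitz constant $L$. Since $\partial W \subset \{r_1 \le |p - p_0| \le r_2\}$ is compact, a standard Vitali-type extraction produces a finite subcover of cardinality $N$ bounded by a function of $r_2/\delta$, which again depends only on $r_1, r_2, \eta$. The main obstacle in the argument is verifying that the threshold $\alpha^*$, and therefore $\delta$, can be made genuinely uniform over all choices of base point $\omega_0$ and all admissible radial functions $f$; this is precisely what the lower bound on $\rho(\alpha_2) - \rho(\alpha_1)$ above is designed to provide, and it is ultimately the reason one must impose both the lower bound $f > r_1$ and the global Lipschitz bound on $f$, not just local regularity.
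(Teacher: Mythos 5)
The paper offers no written proof to compare against; Lemma~\ref{covering} is stated with the remark that the argument is standard and omitted. Your proposal is the standard radial-graph argument: parametrize $\partial W$ as $f(\omega)\omega$, work near a base direction $\omega_0$ in coordinates aligned with $\omega_0$, and show that the map from the tangential projection $(X,Y)$ to the normal coordinate $Z$ is Lipschitz with constants governed by the lower bound $r_1$ on $f$ and the Lipschitz bound $\eta$ on $f$. The overall strategy is sound and the finite-cover step via a maximal $\delta/2$-separated net inside the annulus $\{r_1 \le |p-p_0| \le r_2\}$ correctly yields a bound on $N$ in terms of $r_1, r_2, \delta$.

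There is, however, a genuine imprecision in the central inversion step. You reduce to inverting $\alpha \mapsto \rho(\alpha) = f(\omega(\alpha,\beta))\sin\alpha$ and then claim ``$\alpha$ is recoverable from $\rho = \sqrt{X^2+Y^2}$.'' This is false as stated: since $f$ depends on the full direction $\omega(\alpha,\beta)$, the relation between $\rho$ and $\alpha$ depends on $\beta$, so $\alpha$ is a function of the pair $(\rho,\beta)$, not of $\rho$ alone. Your lower bound $\rho(\alpha_2)-\rho(\alpha_1) \ge r_1(\sin\alpha_2-\sin\alpha_1) - \eta\,|\omega_2-\omega_1|\sin\alpha_1$ is correct, but it is a fixed-$\beta$ estimate; it gives bi-Lipschitz control of $\alpha$ in terms of $\rho$ only along rays of constant $\beta$. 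To obtain the claimed two-variable bound $|Z(\alpha_2,\beta_2)-Z(\alpha_1,\beta_1)| \le L\,|(X_2,Y_2)-(X_1,Y_1)|$, one must also compare points with different $\beta$, which requires controlling the $\beta$-variation of $\rho$ (and hence of the recovered $\alpha$) against $|(X,Y)$-variation|. This can be done, for instance by splitting the difference into a fixed-$\beta$ step and a fixed-$\alpha$ step, or more cleanly by avoiding spherical coordinates altogether: write $\Phi(\omega)-\Phi(\omega') = f(\omega)(\omega-\omega') + (f(\omega)-f(\omega'))\omega'$, observe that for $\omega,\omega'$ in a cap of radius $\delta$ about $\omega_0$ the first term is nearly tangential and the second has a small tangential part, and deduce directly that the projection of $\Phi$ onto $\omega_0^\perp$ is bi-Lipschitz with constant depending only on $r_1,r_2,\eta$ once $\delta$ is small enough, while the $\omega_0$-component of $\Phi$ is Lipschitz. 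This sidesteps both the $\alpha=0$ coordinate singularity and the $\beta$-dependence issue. Your estimate already contains all the quantitative inputs needed for this cleaner route, so the gap is one of presentation rather than a missing idea, but as written the final Lipschitz bound does not follow from what precedes it.
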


The proof of this lemma is standard and is omitted. 

We will use the following notation: we will write $C(\alpha,\beta,\ldots)$ to indicate that $C$ is a constant depending only on $\alpha,\beta,\ldots$.

The following lemma is the crucial result which is taken from \cite{JN2000}.

\begin{lemma}
\label{H32}
Let $p_0 \in \R^3$ and let $W \in \calA(p_0)$ be a liquid domain contained in a ball $B(p_0, r_2)$. Let $\vp$ be a solution of the eigenvalue problem \eqref{lap}--\eqref{ort} such that $\int_F \vp^2 = 1$, and let $\nu$ be the corresponding eigenvalue. Then $\vp \in \Hs(W)$ and there is an extension $\tilde{\vp} \in \Hs(\R^3)$ of $\vp$ (that is, $\tilde{\vp} = \vp$ on $W$) such that $\supp \tilde{\vp} \subset B(x_0, r_2 + 1)$. In particular, $\nabla \tilde{\vp} \in L^3(\R^3)$.

Suppose that $0 < r_1 < r_2$, $\eta > 0$, $\nu_0 > 0$ and $a > 0$. If $W \in \calA(p_0, r_1, r_2, \eta)$, $\nu \le \nu_0$ and $\|\vp\|_{L^2(W)} \le a$, then $\|\vp\|_{\Hs(W)}$, $\|\tilde{\vp}\|_{\Hs(\R^3)}$ and $\| \nabla \tilde{\vp} \|_{L^3(\R^3)}$ are bounded above by constants depending only on $r_1$, $r_2$, $\eta$, $\nu_0$ and $a$.
\end{lemma}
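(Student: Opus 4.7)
The plan is to mirror the argument of Jerison--Nadirashvili \cite{JN2000} for the Neumann Laplacian, adapting it to the mixed Steklov--Neumann setting. The argument splits into four steps: an a priori $L^\infty$ bound on $\vp$; a Rellich identity with the vector field $X(p) = p - p_0$ giving $L^2$ control of $\nabla\vp$ on $\partial W$; the Dahlberg--Jerison--Kenig theory upgrading such boundary control to $\Hs(W)$ regularity; and a bounded extension plus Sobolev embedding. The Lipschitz character of $W$ enters throughout only through the parameters $r_1, r_2, \eta$ via Lemma~\ref{covering}.

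For the $L^\infty$ bound, first I would test $\Delta\vp = 0$ against $\vp$ and use \eqref{nu}--\eqref{nc} to obtain $\int_W |\nabla\vp|^2 = \nu \int_F \vp^2 \le \nu_0 a^2$, which gives uniform $H^1(W)$ control. Moser iteration for harmonic functions on Lipschitz domains with a bounded Steklov coefficient then yields $\|\vp\|_{L^\infty(W)} \le C(r_1, r_2, \eta, \nu_0, a)$. For the Rellich step, since $W \in \calA(p_0, r_1, r_2, \eta)$ one has $X \cdot n \ge c(r_1, r_2, \eta) > 0$ a.e.\ on $\partial W$; integrating the standard Rellich identity for $X(p) = p - p_0$ over $W$ and using $\Delta\vp = 0$ produces
\begin{equation*}
c \int_{\partial W} |\nabla\vp|^2 \, d\sigma \le \int_W |\nabla\vp|^2 + 2\int_{\partial W} (X \cdot \nabla\vp)(\partial_n\vp)\, d\sigma.
\end{equation*}
The boundary integral vanishes on $B$ by \eqref{nc}; on $F$, substituting $\partial_n\vp = \nu\vp$, decomposing $X$ into its tangential and vertical components on $F$, and integrating by parts tangentially on $F$ reduces it to an expression bounded in terms of $\nu_0$, $\|\vp\|_{L^\infty(W)}$ and the geometry of $F$. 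This produces a uniform $\|\nabla\vp\|_{L^2(\partial W)}$ bound.

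Given $L^2(\partial W)$ control of $\nabla\vp$, the standard theory of harmonic functions on Lipschitz domains delivers nontangential maximal function estimates for $\nabla\vp$ and hence $\vp \in \Hs(W)$ with controlled norm. A bounded extension operator $\Hs(W) \to \Hs(\R^3)$ (available for Lipschitz domains), composed with a smooth cutoff equal to $1$ on $\overline{W}$ and supported in $B(p_0, r_2 + 1)$, provides $\tilde\vp$; the $L^3$ bound follows from $\nabla \tilde\vp \in H^{1/2}(\R^3)$ and the Sobolev embedding $H^{1/2}(\R^3) \hookrightarrow L^3(\R^3)$. The main obstacle is the edge $\overline F \cap \overline B$ where the boundary conditions change type: star-shapedness with respect to $p_0$ is precisely what keeps $X \cdot n$ bounded below by a positive constant on $\partial W$, making the Rellich identity robust against the corner singularities there; without it, $\Hs$ with uniform constants would generally fail and the correct endpoint regularity would be strictly worse.
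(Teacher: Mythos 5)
Your outline leads to the same conclusion, but the route to the key boundary $L^{2}$ estimate for $\nabla\vp$ is genuinely different from the paper's. You propose to run a Rellich identity directly on $W$ with the radial vector field $X(p) = p - p_0$, using a preliminary $L^\infty$ bound (via Moser iteration) to control the line integral over $\partial F$ that appears after tangential integration by parts on the free surface. The paper instead regards $\vp$ as the solution of the inhomogeneous Neumann problem $\Delta\vp = 0$ in $W$, $\partial\vp/\partial n = g$ on $\partial W$ with $g = \nu\,1_F\vp$, and observes that the normalization $\int_F\vp^2 = 1$ gives $\|g\|_{L^2(\partial W)} = \nu \le \nu_0$ with no additional work; Theorem~2 of \cite{JK1981} (equivalently Theorem~4.1 of \cite{JK1982}) then yields $\|M(\nabla\vp)\|_{L^2(\partial W)} \le C\nu_0$ directly, with $C$ depending only on the Lipschitz character supplied by Lemma~\ref{covering}. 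This sidesteps the $L^\infty$ bound entirely. The back half of your argument --- passing from boundary control of $\nabla\vp$ to $\vp\in\Hs(W)$, extending to $\R^3$, and applying the embedding $H^{1/2}(\R^3)\hookrightarrow L^3(\R^3)$ --- is the same in outline; the paper gives the details via \cite{JK1995}, Stein's extension theorem, and real interpolation in order to track the constants explicitly.

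Two caveats about your Rellich step. First, on a Lipschitz domain the identity cannot be applied to $\vp$ as written: one does not know a priori that $\nabla\vp$ has an $L^2$ boundary trace, so you would have to run the identity on a smooth interior exhaustion of $W$, obtain uniform estimates, and pass to the limit --- which is precisely the content of the Jerison--Kenig theorem you would be re-proving rather than shortcutting. Relatedly, your phrase that ``the standard theory of harmonic functions on Lipschitz domains delivers nontangential maximal function estimates for $\nabla\vp$'' from an $L^2(\partial W)$ bound on $\nabla\vp$ itself is also, in effect, an appeal to the same package of results; the paper goes to the nontangential maximal function first, which is what the theory actually hands you. Second, a minor slip: by the normalization, $\int_W|\nabla\vp|^2 = \nu\int_F\vp^2 = \nu \le \nu_0$, not $\le \nu_0 a^2$; the constant $a$ enters only via $\|\vp\|_{L^2(W)}$ when one assembles the full $\Hs(W)$ norm at the end. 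Neither point is fatal, but they make your route longer than the paper's without buying additional generality.
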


\begin{proof}
The proof is based on the arguments used in the proof of Lemma 2.5 in \cite{JN2000}. For the convenience of the reader we write here much more details, but the main idea is exactly the same. As it will be seen below the fact that $\varphi \in H^{3/2}(W)$ follows easily from \cite{JK1982} and \cite{JK1995}. What is more difficult to justify is the fact that the norm $\|\varphi\|_{H^{3/2}(W)}$ depends only on $r_1$, $r_2$, $\eta$, $\nu_0$ and $a$, and the properties of the extension. In order to justify these facts we repeat some of the arguments from \cite{JK1995}.

Suppose that $W \in \calA(p_0, r_1, r_2, \eta)$. Consider $\vp$ as the solution of the Neumann problem
\[
\Delta \vp = 0 \quad \text{on} \quad W, \quad \frac{\partial \vp}{\partial n} = \nu 1_F \vp \quad \text{on} \quad \partial{W}.
\]
By $M(u)$ we denote the nontangential maximal function of the function $u: W \to \R$ defined for $q \in \partial W$ by
\[
M(u)(q) = \sup\{|u(p)|: \, p \in W, \, |p - q| < \kappa \dist(p,\partial W)\}
\]
(cf. \cite[p.~22]{JK1982}, or \cite{JK1981}). Here we take $\kappa = 2(L + 1)$ where $L$ is the constant from Lemma \ref{covering}. Of course $L$ and $\kappa$ depend only on $r_1$, $r_2$ and $\eta$. By $M(\nabla \vp)$ we mean $M(\frac{\partial \vp}{\partial x}) + M(\frac{\partial \vp}{\partial y}) + M(\frac{\partial \vp}{\partial z})$. Theorem 4.1 in \cite{JK1982} or Theorem 2 in \cite{JK1981} give that 
\begin{equation}
\label{eq:mnablavp}
\|M(\nabla \vp)\|_{L^2(\partial W)} \le C \|\nu 1_F \vp\|_{L^2(\partial W)} =
C \nu^2 \int_F \vp^2 = C \nu^2 \le C \nu_0^2 ,
\end{equation}
and Corollary 5.7  in  \cite{JK1995} (cf. also~\cite[Remark (b), p.~206]{JK1981}) gives that $\vp \in \Hs(W)$. 

Now our aim will be to show that the norm $\|\vp\|_{\Hs(W)}$ depends only on $r_1$, $r_2$, $\eta$, $\nu_0$, $a$. At first we want to justify that the norm $\|M(\nabla \vp)\|_{L^2(\partial W)}$ depends only on $r_1$, $r_2$, $\eta$ and $\nu_0$. This follows from the proof of Theorem 2  in  \cite{JK1981}. Indeed this theorem is proved first for star-shaped smooth domains $\Omega = \{(r,\theta): \, 0 \le r < \Psi(\theta)\}$, $\Psi \in C^{\infty}(S^{n - 1})$, see  \cite[p.~205, line 4]{JK1981}. For such domains the assertion of Theorem 2 holds with a constant depending only on the Lipschitz constant  of the function $\Psi$, $\|\Psi\|_{\infty}$, $\min_{\theta \in S^{n - 1}} |\Psi(\theta)|$  and $\kappa$,  see  \cite[p.~205, formula~(5)]{JK1981}. The rest of the proof of Theorem 2 is the approximation of general star-shaped domains by smooth star-shaped domains. It occurs that the constant $C$ in the formulation of Theorem 2 depends only on the Lipschitz constant  of the function $\Psi$ defing the star-shaped Lipschitz domain, $\|\Psi\|_{\infty}$, $\min_{\theta \in S^{n - 1}} |\Psi(\theta)|$  and $\kappa$. In our situation this gives that the constant $C$ appearing in (\ref{eq:mnablavp}) depends only on $r_1$, $r_2$ and $\eta$. Note that in \cite{JK1981} the nontangential maximal function is defined for $\kappa = 2$ but the proof for the constant $\kappa = 2(L + 1)$ is exactly the same.

Now let 
$$
\Gamma(q) = \{p \in W: \, |p - q| < \kappa \dist(p,\partial W)\}, \quad q \in \partial W
$$
and define the \emph{area integral} of a function $v: W \to \R$ by
\[
S(v)(q) = \left(\int_{\Gamma(q)} \dist(p,\partial W)^{-1} |\nabla v(p)|^2 \, dp \right)^{1/2}.
\]
By $S(\nabla v)$ we mean $S(\frac{\partial v}{\partial x}) + S(\frac{\partial v}{\partial y}) + S(\frac{\partial v}{\partial z})$. By Theorem 5.4 in \cite{JK1995} (cf. also  \cite[Corollary 5.7, (a)$\implies$(c)]{JK1995})  and~\eqref{eq:mnablavp} we obtain
\[
\|S(\nabla \vp)\|_{L^2(\partial W)} \le C \|M(\nabla \vp)\|_{L^2(\partial W)} \le C',
\]
where $C' = C'(r_1,r_2,\eta,\nu_0)$. We claim that (cf. \cite[Corollary~5.7]{JK1995})
\begin{equation}
\label{nabla22}
\int_W \dist(p,\partial W) |\nabla^2\vp(p)|^2 \, dp < C'',
\end{equation}
where $C'' = C''(r_1,r_2,\eta,\nu_0)$. Here $\nabla^2 \vp$ is the vector of all  second derivatives of $\vp$ (cf. \cite[p.~181, line~6]{JK1995}). The inequality for the integral over a neighborhood of $\partial W$ follows from the estimate of $\|S(\nabla \vp)\|_{L^2(\partial W)}$ by appling the argument used in the proof of the upper bound inequality in (6.1) of \cite{D1980} to each of $\frac{\partial \vp}{\partial x}$, $\frac{\partial \vp}{\partial y}$, $\frac{\partial \vp}{\partial z}$. The integral over  the remaining compact subset of $W$ is bounded using a simple gradient estimate for each of the harmonic functions $\frac{\partial \vp}{\partial x}$, $\frac{\partial \vp}{\partial y}$, $\frac{\partial \vp}{\partial z}$, and the inequality $\int_W |\nabla \vp|^2 = \nu \le \nu_0$. Our claim is proved. 

Now we will argue like in Corollary 5.7, (c)$\implies$(b) in  \cite{JK1995}. Note that $\nabla \vp$ is harmonic so one could use Corollary 5.5, (c)$\implies$(b) in  \cite{JK1995} for $v = \nabla \vp$. The implication  (c)$\implies$(b) in Corollary~5.5 in \cite{JK1995}  follows from the proof of Theorem 4.1, (b)$\implies$(a) in  \cite{JK1995} for $u = v = \nabla \vp$. Indeed we have 
$$
\|\delta^{1/2}(p) \nabla u(p)\|_{L^2(W)} + \|u\|_{L^2(W)} \le C(r,R,\eta,\nu_0)
$$
for $u = \nabla \vp$, $\delta(p) = \dist(p,\partial{W})$. From the proof of Theorem 4.1, (b)$\implies$(a) in  \cite{JK1995} it follows that
$$
u = \nabla \vp \in [L^2(W),H^1(W)]_{1/2,2}
$$
and $\|\nabla \vp\|_{[L^2(W),H^1(W)]_{1/2,2}} \le C(r_1,r_2,\eta,\nu_0)$. The last inequality follows from the proof of Theorem 4.1, (b)$\implies$(a) in  \cite{JK1995} and Lemma \ref{covering}.  Here  $[L^2(W),H^1(W)]_{1/2,2}$ is the interpolation space of power $1/2$ given by the real interpolation method  and  $\nabla \vp \in [L^2(W),H^1(W)]_{1/2,2}$ means that  $\frac{\partial \vp}{\partial x}, \frac{\partial \vp}{\partial y}, \frac{\partial \vp}{\partial z} \in [L^2(W),H^1(W)]_{1/2,2}$. 

Now we will argue in the similar way like in Proposition 2.17  in  \cite{JK1995} (or in Proposition 2.4  in  \cite{JK1995} with real interpolation instead of complex interpolation). By Theorem  VI.5  in \cite{S1970} for any bounded Lipschitz domain $D \subset \R^n$ there is an extension operator $E$ mapping functions on $D$ to functions on $\R^n$ such that $Ef(x) = f(x)$, $x \in D$, and such that for each $k \in \N$, $E$ is a bounded operator from $H^k(D)$ to $H^k(\R^n)$ (cf. also Theorem 2.3 in \cite{JK1995}) ($H^0 = L^2$). When $D$ is contained in a ball $B(p,R)$, $R > 0$, the operator $E$ may be chosen so that for any function $f$ on $D$ we have $\supp(Ef) \subset B(p,R + 1)$. Assume that there exist numbers $\delta > 0$, $N \in \N$, $L > 0$ such that the boundary $\partial D$ may be covered by balls $B_i$, $i = 1, \ldots, N$ of radii $\delta$ such that for each $i$, $B_i \cap \partial D$ is the graph of a Lipschitz function with Lipschitz constant $L$. It follows from Theorem  VI.5' in  \cite{S1970} and the proof of Theorem  VI.5 in  \cite{S1970} ( pages 190--192,  see in particular (31)  on  page 191 and inequalities on page 192) that the norm of $E: H^k(D) \to H^k(\R^n)$ depends only on $n$, $k$, $\delta$, $N$, $L$. What is more formula (1.19), page 174 from \cite{JK1995}, arguments after this formula and formula (31), page 191  in  \cite{S1970} give that 
\begin{equation}
\label{Enabla}
\frac{\partial}{\partial x_j}(E f) = 
\sum_{i = 1}^n Q_{j,i}(\frac{\partial}{\partial x_j} f) + S_j(f),
\quad j = 1, \ldots, n,
\end{equation}
where $Q_{j,i}$, $S_j$ are bounded linear operators from $H^k(D)$ to $H^k(\R^n)$ such that their norms depend only on $n$, $k$, $\delta$, $N$, $L$. It follows that if $f, \nabla f \in H^k(D)$ then $\nabla(E f) \in H^k(\R^n)$ and $\|\nabla(E f)\|_{H^k(\R^n)} \le C(n,k,\delta,N,L) (\|\nabla f\|_{H^k(D)} + \|f\|_{H^k(D)})$

Now let us come back to our situation. By Lemma \ref{covering} the norms of $E$, $Q_{j,i}$, $S_j$ as bounded linear operators from $H^k(W)$ to $H^k(\R^3)$ depend only on $k$, $r_1$, $r_2$, $\eta$. We know that $\vp \in H^1(W)$, $\nabla \vp \in [H^0(W),H^1(W)]_{1/2,2}$ and $\|\vp\|_{H^1(W)} \le C(r_1,r_2,\eta,\nu_0,a)$, $\|\nabla \vp\|_{[H^0(W),H^1(W)]_{1/2,2}} \le C(r_1,r_2,\eta,\nu_0)$. By the real interpolation method and (\ref{Enabla}) (cf. the proof of Proposition 2.4  in  \cite{JK1995}) it follows that $E \vp \in H^1(\R^3)$, $\nabla(E \vp) \in [H^0(\R^3),H^1(\R^3)]_{1/2,2}$ and also $\|E \vp\|_{H^1(\R^3)} \le C(r_1,r_2,\eta,\nu_0,a)$, $\|\nabla(E \vp)\|_{[H^0(\R^3),H^1(\R^3)]_{1/2,2}} \le C(r_1,r_2,\eta,\nu_0,a)$.

We have
$$
[H^0(\R^3),H^1(\R^3)]_{1/2,2} = B_{2,2}^{1/2}(\R^3) \subset H^{1/2}(\R^3) \subset L^3(\R^3),
$$
where $B_{2,2}^{1/2}(\R^3)$ is the Besov space and both inclusions are continuous embeddings. The equality follows from Theorem 6.2.4 \cite{BL1976}, the first inclusion follows from Theorem 6.4.4  in  \cite{BL1976} and the second inclusion follows from Theorem 6.5.1  in  \cite{BL1976}. 

Recall also that $\supp(E \vp), \supp(\nabla(E \vp)) \subset B(p_0,r_2 + 1)$. It follows that $\nabla(E \vp) \in L^3(B(p_0,r_2 + 1))$ and $\|\nabla(E \vp)\|_{L^3(B(p_0,r_2 + 1))} \le C(r_1,r_2,\eta,\nu_0,a)$. We also have $E \vp \in H^1(B(p_0,r_2 + 1))$, $\nabla(E \vp) \in H^{1/2}(B(p_0,r_2 + 1))$,  and the norms  $\|E \vp\|_{H^1(B(p_0,r_2 + 1))}$, $\|\nabla(E \vp)\|_{H^{1/2}(B(p_0,r_2 + 1))}$  are bounded by  $C(r_1,r_2,\eta,\nu_0,a)$. It follows that ( see e.g. \cite[formula~(38)]{ES1992},  cf. also \cite[Proposition 2.18(a)]{JK1995}) $E \vp \in H^{3/2}(B(p_0,r_2 + 1))$ and $\|E \vp\|_{H^{3/2}(B(p_0,r_2 + 1))} \le C(r_1,r_2,\eta,\nu_0,a)$.
\end{proof}

Recall that by scaling it is sufficient to consider the domains $W$ such that $F$ is a unit disk. For this reason we only consider $W \in \calW(\varepsilon,M,H,1)$ (that is we fix $r_0 = 1$).

\begin{figure}
\centering
\includegraphics[scale=0.8]{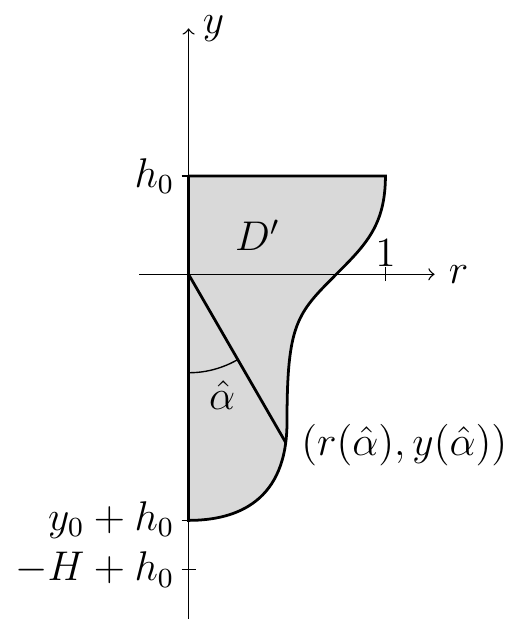}
\caption{An auxiliary picture for Lemma \ref{angle}.}
\label{fig:3}
\end{figure}

\begin{lemma}
\label{angle}
Fix $\ve \in (0, 1)$, $M \ge 1$, $H > \ve$. Assume that $D \in \calD(\ve,M,H,1)$. Let $h_0 = \ve/(2 M)$ and let $D'$ be the translation of $D$ by the vector $(0, h_0)$ (see Figure \ref{fig:3}). Let $(\hat{r}, \hat{\alpha})$ be the polar coordinate system in the $(r, y)$ plane, with $y = \hat{r} \cos \hat{\alpha}$ and $r = \hat{r} \sin \hat{\alpha}$. Then there is a Lipschitz function $f$ on $[0, \pi]$ such that $D' = \{ 0 < \hat{r} < f(\hat{\alpha}) , \, 0 < \hat{\alpha} < \pi \}$. Furthermore, the Lipschitz constant of $f$, the infimum of $f$ and the supremum of $f$ depend only on $\ve, M, H$.
\end{lemma}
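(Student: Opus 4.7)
The plan is to show that the translated domain $D'$ is star-shaped with respect to the origin, which lies in the relative interior of $R(D')$ (since $y_0 + h_0 < 0 < h_0$, using $h_0 = \varepsilon/(2M) < \varepsilon < |y_0|$), and then to represent the outer boundary $\overline{F(D') \cup B(D')}$ as a Lipschitz graph $\hat{r} = f(\hat{\alpha})$. The key step is strict monotonicity of the angle $\hat{\alpha}$ along this outer boundary, traversed from the top corner $(0, h_0)$ (where $\hat{\alpha} = 0$) along $F(D')$ to $(1, h_0)$ and then along $B(D')$ in reverse to the bottom corner $(0, y_0 + h_0)$ (where $\hat{\alpha} = \pi$).

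To verify monotonicity along $B(D')$, parametrize by $(r(t), \tilde{y}(t)) = (r(t), y(t) + h_0)$ and compute
\begin{equation*}
\frac{d\hat{\alpha}}{dt} = \frac{r'(t)\,\tilde{y}(t) - r(t)\,\tilde{y}'(t)}{\hat{r}(t)^2}.
\end{equation*}
The essential observation is that the upper region $\tilde{y} > 0$ is confined to $t \in (T - h_0, T]$: monotonicity of $y$ combined with $y(T - \varepsilon) = -\varepsilon$ and $\varepsilon \le T/2$ force $y(t) \le -\varepsilon < -h_0$ for $t \le T - \varepsilon$, and on $[T - \varepsilon, T]$ the explicit form $y(t) = t - T$ places the zero of $\tilde{y}$ at $t = T - h_0$. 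On $(T - h_0, T]$ we have $\tilde{y}' = 1$ and, from the Lipschitz condition on $r$, $r(t) \ge 1 - M h_0 = 1 - \varepsilon/2$ and $r'(t)\tilde{y}(t) \le M h_0 = \varepsilon/2$, whence $r'\tilde{y} - r\tilde{y}' \le \varepsilon/2 - (1 - \varepsilon/2) = \varepsilon - 1 < 0$. In the lower region $\tilde{y} \le 0$, the nondecreasing parametrization gives $r'\tilde{y} \le 0$ and $r\tilde{y}' \ge 0$, so $d\hat{\alpha}/dt \le 0$; simplicity of the curve excludes $r' = \tilde{y}' = 0$ on any subinterval, so the inequality is strict away from a discrete set. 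Combined with the explicit monotonicity of $\hat{\alpha} = \arctan(r/h_0)$ along $F(D')$, this yields a strictly monotone angular parametrization and the representation $D' = \{0 < \hat{r} < f(\hat{\alpha}),\, 0 < \hat{\alpha} < \pi\}$ with $f(0) = h_0$ and $f(\pi) = -y_0 - h_0$.

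For the Lipschitz bound I use the identity $|df/d\hat{\alpha}| = f\,|\cot\theta|$, where $\theta$ is the angle between the radius and the tangent, so that $|\sin\theta| = |r\tilde{y}' - r'\tilde{y}|/(\hat{r}\sqrt{r'^2 + \tilde{y}'^2})$. On $F(D')$ the explicit formula $f(\hat{\alpha}) = h_0/\cos\hat{\alpha}$ is smooth on $[0, \arctan(1/h_0)]$. Along $B(D')$ I estimate $|\sin\theta|$ from below in four regimes covering $[0, T]$: on $[0, \varepsilon]$, $r(t) = t$ and $\tilde{y} \le -\varepsilon/2$ give numerator $|ty'(t) - (y(t) + h_0)| \ge \varepsilon/2$ and denominator bounded by $\sqrt{1 + H^2}\sqrt{1 + M^2}$; on $[\varepsilon, T - \varepsilon]$, $r \ge \varepsilon$ and $|\tilde{y}| \ge \varepsilon/2$ give $r\tilde{y}' + |\tilde{y}|r' \ge (\varepsilon/2)(r' + \tilde{y}') \ge (\varepsilon/2)\sqrt{r'^2 + \tilde{y}'^2}$, yielding $|\sin\theta| \ge (\varepsilon/2)/\sqrt{1 + H^2}$ independently of the parametrization speed; on $[T - \varepsilon, T - h_0]$, $\tilde{y}' = 1$ and $r \ge r(\varepsilon) = \varepsilon$ give $r - r'\tilde{y} \ge \varepsilon$; and on $[T - h_0, T]$ the bound $r - r'\tilde{y} \ge 1 - \varepsilon$ from the previous paragraph applies. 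Together these give $|\sin\theta| \ge c(\varepsilon, M, H) > 0$ uniformly, and hence $|df/d\hat{\alpha}| \le \sqrt{1 + H^2}/c$.

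Finally, the pointwise bounds are $\sup f \le \sqrt{1 + H^2}$ from $D' \subset [0, 1] \times [-H, h_0]$, and $\inf f \ge h_0$ from the distance $h_0$ of the origin to $F(D')$ together with the lower bound $\min(\varepsilon/2, 1 - \varepsilon/2) \ge h_0$ on the distance from the origin to $B(D')$, obtained from the same case analysis applied to the positions. The main obstacle, in my view, is the strict inequality $r'\tilde{y} - r\tilde{y}' < 0$ on the upper region in paragraph~2: this does not follow from monotonicity of $r$ and $y$ alone, and relies crucially on the quantitative choice $h_0 = \varepsilon/(2M)$, which ensures $M h_0 = \varepsilon/2 < 1 - \varepsilon/2 \le r(t)$ on $t \in (T - h_0, T]$, so that the tangent to $B(D')$ is never radial in that regime.
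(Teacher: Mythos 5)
Your proof is correct and follows essentially the same strategy as the paper: parametrize the translated boundary curve, compute the angular derivative $\hat{\alpha}'(t)$, show it does not change sign (via the crucial bound $M h_0 = \ve/2 < r$ near the corner, and the sign argument where $r' \ge 0$, $y' \ge 0$, $\tilde y < 0$), then estimate the Lipschitz constant by lower-bounding the radial-tangent angle over the same three or four parameter regimes. The only cosmetic differences are that you phrase the Lipschitz estimate through $|df/d\hat{\alpha}| = f\,|\cot\theta|$ rather than directly as the ratio $|\hat r'|/\hat\alpha'$ (the two are algebraically identical), that you split $[T-\ve,T]$ into two sub-regimes where the paper handles it in one, and that the paper explicitly reparametrizes so that $r,y$ are absolutely continuous with $r'+y'>0$ a.e.\ before differentiating — a technical step you invoke only implicitly when dismissing the degenerate case $r'=\tilde y'=0$.
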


\begin{proof}
The proof of this lemma is standard. Let $(r(t), y(t))$, $t \in [0, T]$, be the curve from the definition of $\calD$. Then the boundary of $D'$ consists of a part of the $y$-axis, a horizontal interval, and the curve $(r(t), y(t) + h_0)$, $t \in [0, T]$. In the polar coordinates, this curve is given by
\[
 \hat{r}(t) = \sqrt{(r(t))^2 + (y(t) + h_0)^2} , \quad \hat{\alpha}(t) = \frac{\pi}{2} + \arctan \frac{y(t) + h_0}{r(t)} \, .
\]
Our goal is to prove that $\hat{\alpha}(t)$ is increasing, and that $f = r \circ \hat{\alpha}^{-1}$ is a Lipschitz function.

By an appropriate reparametrization, with no loss of generality we assume that $r(t)$ and $y(t)$ are absolutely continuous functions of $t$, and $r'(t) + y'(t) > 0$ for a.e. $t \in [0, T]$. Since $r(t) \ge r(\ve) = \ve$ for $t \in [\ve, T]$, and $y(t) \le y(T - \ve) = -\ve$ for $t \in [0, T - \ve]$, we have $\hat{r}(t) \ge \ve - h_0$ for all $t$. Furthermore, $r'(t) = 1$ and $0 \le y'(t) \le M$ for a.e. $t \in [0, \ve]$, and similarly $0 \le r'(t) \le M$ and $y'(t) = 1$ for a.e. $t \in [T-\ve, T]$. We find that $\hat{r}(t)$ and $\hat{\alpha(t)}$ are absolutely continuous, and for a.e. $t \in [0,T]$,
\begin{align*}
 \hat{r}'(t) & = \frac{r(t) r'(t) + (y(t) + h_0) y'(t)}{\hat{r}(t)} \, , &
 \hat{\alpha}'(t) & = \frac{r(t) y'(t) - (y(t) + h_0) r'(t)}{(\hat{r}(t))^2} \, .
\end{align*}
For a.e. $t \in [\ve, T-\ve]$, we have
\[
 \hat{\alpha}'(t) \ge \frac{\ve y'(t) + (\ve - h_0) r'(t)}{(\hat{r}(t))^2} \ge \frac{\ve}{2} \, \frac{y'(t) + r'(t)}{(\hat{r}(t))^2} \, . 
\]
For a.e. $t \in [0, \ve]$,
\[
 \hat{\alpha}'(t) = \frac{t y'(t) - (y(t) + h_0)}{(\hat{r}(t))^2} \ge \frac{\ve - h_0}{(\hat{r}(t))^2} \ge \frac{\ve}{2(1 + M)} \frac{y'(t) + r'(t)}{(\hat{r}(t))^2} \, .
\]
Finally, for a.e. $t \in [T-\ve, T]$,
\[
 \hat{\alpha}'(t) = \frac{r(t) - (t - T + h_0) r'(t)}{(\hat{r}(t))^2} \ge \frac{\ve - h_0 M}{(\hat{r}(t))^2} \ge \frac{\ve}{2} \frac{1}{(\hat{r}(t))^2} \ge \frac{\ve}{2(1 + M)} \frac{y'(t) + r'(t)}{(\hat{r}(t))^2} \, . 
\]
In particular, $\hat{\alpha}(t)$ is strictly increasing, with $\hat{\alpha}(0) = 0$ and $\hat{\alpha}(T) = \pi/2 + \arctan(h_0)$. Furthermore, in a similar manner, we have for a.e. $t \in [0, T]$,
\[
 |\hat{r}'(t)| \le \frac{r(t) r'(t) + |y(t) + h_0| y'(t)}{\hat{r}(t)} \le (1 + H + \ve) \, \frac{r'(t) + y'(t)}{\hat{r}(t)} \, . 
\]
Hence, for a.e. $t \in [0, T]$,
\[
 \frac{|\hat{r}'(t)|}{\hat{\alpha}'(t)} \le (1 + H + \ve) \, \frac{2 (1 + M)}{\ve} \, \hat{r}(t) \le \frac{2 (1 + M) (1 + H + \ve)^2}{\ve} \, .
\]
It follows that $D' = \{ 0 < \hat{r} < f(\hat{\alpha}) , \, 0 < \hat{\alpha} < \pi \}$ with $f(\hat{\alpha})$ given by $f(\hat{\alpha}(t)) = \hat{r}(t)$, $t \in [0, T]$ (that is, for $\hat{\alpha} \in [0, \hat{\alpha}(T)]$), and by $f(\hat{\alpha}) = -h_0 / \cos \hat{\alpha}$ for $\hat{\alpha} \in [\hat{\alpha}(T), \pi]$. Since $\hat{r}(t) \ge \ve - h_0 \ge \ve/2$, $f$ is bounded below by $\ve/2$. Also, $f(\hat{\alpha}) \le 1 + H + \ve$. Finally, $f$ is absolutely continuous, for a.e. $t \in [0, T]$ we have
\[
 |f'(\hat{\alpha}(t))| = \frac{|\hat{r}'(t)|}{\hat{\alpha}'(t)} \le \frac{2 (1 + M) (1 + H + \ve)^2}{\ve} \/ ,
\]
and for $\hat{\alpha} \ge \hat{\alpha}(T)$,
\[
 |f'(\hat{\alpha})| = \frac{h_0 \sin \hat{\alpha}}{(\cos \hat{\alpha})^2} \le \frac{h_0}{(\sin(\arctan(h_0)))^2} = \frac{h_0^2 + 1}{h_0} \/ ,
\]
so that $f$ is a Lipschitz function with Lipschitz constant depending on $\ve, M, H$.
\end{proof}

As an immediate conclusion of this lemma we obtain the following result.

\begin{corollary}
\label{inclusion}
Let $\ve \in (0, 1)$, $M \ge 1$, $H > \ve$ and assume that $W \in \calW(\ve,M,H,1)$. Then there exist $p_0 = (0,- \eps/(2M),0)$ and $0 < r_1 < r_2 $, $\eta \ge 1$ depending only on $\ve$, $M$, $H$ such that $W \in \calA(p_0,r_1,r_2,\eta)$.
\end{corollary}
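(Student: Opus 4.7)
The plan is to translate $W$ by $(0, h_0, 0)$ with $h_0 = \ve/(2M)$, apply Lemma \ref{angle} to the translated meridian cross-section, and then promote the resulting two-dimensional polar description to a three-dimensional spherical description by means of the axial symmetry of $W$.

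Set $p_0 = (0, -h_0, 0)$. By Lemma \ref{angle} applied to $D \in \calD(\ve, M, H, 1)$, the translated cross-section $D' = D + (0, h_0)$ admits the polar description
\[
D' = \{ 0 < \hat{r} < f(\hat{\alpha}) , \, 0 < \hat{\alpha} < \pi \},
\]
where $f$ is Lipschitz on $[0, \pi]$ with Lipschitz constant $L_f$, and $\inf f$, $\sup f$ depending only on $\ve, M, H$. For any $p = (x, y, z) \neq p_0$, I write $p - p_0 = \rho \omega$ with $\rho = |p - p_0|$ and $\omega = (\omega_1, \omega_2, \omega_3) \in S^2$. Then the distance from $p$ to the $y$-axis is $\sqrt{x^2 + z^2} = \rho \sqrt{\omega_1^2 + \omega_3^2}$ and $y + h_0 = \rho \omega_2$, so that the point $(\sqrt{x^2 + z^2}, y + h_0)$ in the meridian plane has polar coordinates $(\rho, \arccos \omega_2)$. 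By the axial symmetry of $W$ and the definition of $W = W(D)$, one has $p \in W$ iff $(\sqrt{x^2+z^2}, y + h_0) \in D' \cup R(D')$, which, modulo the single point $p_0$ corresponding to $\rho = 0$, is equivalent to $0 < \rho < f(\arccos \omega_2)$. Defining $F : S^2 \to (0, \infty)$ by $F(\omega) = f(\arccos \omega_2)$, I obtain
\[
W = \left\{ p \in \R^3 : 0 < |p - p_0| < F\!\left(\frac{p - p_0}{|p - p_0|}\right) \right\} \cup \{p_0\},
\]
which is the form required by Definition \ref{classA}.

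It remains to verify that $F$ is Lipschitz on $S^2$ with bounds depending only on $\ve, M, H$. The bounds $\inf F = \inf f$ and $\sup F = \sup f$ are inherited directly from $f$, so suitable $r_1$ and $r_2$ exist. For the Lipschitz estimate, observe that $\arccos \omega_2$ equals the geodesic distance $d_{S^2}(\omega, \vec{e}_y)$ on $S^2$ from $\omega$ to the pole $\vec{e}_y = (0, 1, 0)$, and therefore this map is $1$-Lipschitz with respect to the geodesic metric $d_{S^2}$ by the reverse triangle inequality. Since $d_{S^2}(\omega, \omega') = 2 \arcsin(|\omega - \omega'|/2) \le (\pi/2) |\omega - \omega'|$ for all $\omega, \omega' \in S^2$, it follows that $F$ is Lipschitz on $S^2$ with constant at most $(\pi/2) L_f$. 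Taking $\eta = \max(1, (\pi/2) L_f)$ yields $W \in \calA(p_0, r_1, r_2, \eta)$, completing the proof.

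No step in this argument is genuinely difficult once Lemma \ref{angle} is in place. The only subtlety worth flagging is that the formula $F(\omega) = f(\arccos \omega_2)$ involves $\arccos$, which is not Lipschitz on $[-1, 1]$; the Lipschitz continuity of $F$ on $S^2$ is nevertheless preserved because $\omega \mapsto \arccos \omega_2$ is the geodesic distance to a fixed pole, and the geodesic metric on $S^2$ is Lipschitz-equivalent to the Euclidean one.
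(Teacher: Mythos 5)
Your proof is correct and makes explicit the two-dimensional-to-three-dimensional lifting that the paper treats as immediate after Lemma~\ref{angle}, using the standard facts that $\omega\mapsto\arccos\omega_2$ is $1$-Lipschitz for the geodesic metric and that the geodesic metric on $S^2$ is Lipschitz-equivalent to the Euclidean one. One small remark: your formula $F(\omega)=f(\arccos\omega_2)$ follows the convention $y=\hat r\cos\hat\alpha$ as literally stated in Lemma~\ref{angle}, but the \emph{proof} of that lemma in fact uses the opposite orientation (there $\hat\alpha(0)=0$ corresponds to the bottom of the axis, which has $y+h_0<0$), so the consistent formula is $F(\omega)=f(\arccos(-\omega_2))$; this sign does not affect the Lipschitz constant, $\inf F$, or $\sup F$, so your conclusion stands unchanged.
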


As in \cite{JN2000} we define the distance between domains (cf. page 745 in \cite{JN2000}). 

\begin{definition}
\label{distance} Fix $\eps \in (0, 1)$, $M \ge 1$, $H > \eps$. Put $p_0 = (0,- \eps/(2M),0)$. Let $W_1, W_2 \in \calW(\eps,M,H,1)$. Corollary \ref{inclusion} gives that  $W_i \in \calA(p_0)$, so
$$
W_i = \left\{ p \in \R^3: \, 0 < |p - p_0|  < f_i \left( \frac{p - p_0}{|p - p_0|}\right) \right\} 
\cup \left\{ p_0 \right\} , \quad i = 1, 2 ,
$$
for some Lipschitz functions $f_i$ on the unit sphere $S^2$.
We define the distance between $W_1, W_2 \in \calW(\eps,M,H,1)$ by
$$
d_{\calW(\eps,M,H,1)}(W_1,W_2) = \|f_1 - f_2\|_{\infty}.
$$
When it is clear to which class $W_1, W_2$ belong we will abbreviate $d_{\calW(\eps,M,H,1)}(W_1,W_2)$ to $d(W_1,W_2)$.
\end{definition}
Roughly speaking, we measure the distance between $W_1$, $W_2 \in \calW(\eps,M,H,1)$ as the natural distance between star-shaped domains in the class $\calA(p_0)$ for a special choice of $p_0$ depending on $\eps$, $M$.

\begin{lemma}
\label{lem:uniformapprox1}
Assume that $D_s \in \calD(\eps,M,H,1)$ for $s$ in a neighborhood of $0$, for some fixed $\eps \in (0, 1)$, $M \ge 1$, $H > \eps$. Let $r_s(t)$, $y_s(t)$ be functions defining $B(D_s)$ in the sense of Definition~\ref{classD}. Assume that all functions $r_s$, $y_s$ are defined on $[0,T]$, and that $\|r_s - r_0\|_{\infty} \to 0$ and $\|y_s - y_0\|_{\infty} \to 0$ as $s \to 0$. Then
$$
 d(W_s,W_0) = d_{\calW(\eps,M,H,1)}(W_s,W_0) \to 0, \quad \text{as} \quad s \to 0,
$$
where $W_s = W(D_s)$.
\end{lemma}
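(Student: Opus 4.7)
The plan is to reduce the spherical distance $d(W_s,W_0)$ to an $L^\infty$-distance between radial functions on $[0,\pi]$ via Lemma~\ref{angle}, and then use the uniform Lipschitz bound from that lemma to upgrade Hausdorff convergence of boundaries to uniform convergence of the radial representations.

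First, set $h_0=\ve/(2M)$ and let $D'_s$ denote the translation of $D_s$ by $(0,h_0)$. Lemma~\ref{angle} represents $D'_s=\{0<\hat r<f_s(\hat\alpha),\ 0<\hat\alpha<\pi\}$ in the polar coordinates $(\hat r,\hat\alpha)$ of the shifted $(r,y)$-plane, where $f_s$ is Lipschitz with constant $L$, and $\ve/2\le f_s\le 1+H+\ve$, all these constants depending only on $\ve,M,H$. By axisymmetry of $W_s$ and the choice $p_0=(0,-h_0,0)$, writing $\R^3$ in spherical coordinates centred at $p_0$ amounts to rotating the polar coordinates on the shifted $(r,y)$-plane about the $y$-axis. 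Hence the Lipschitz radial function on $S^2$ describing $\partial(W_s-p_0)$ is independent of the azimuth and coincides with $f_s(\hat\alpha)$. Therefore
\[
d(W_s,W_0)=\|f_s-f_0\|_{L^\infty([0,\pi])},
\]
and it suffices to prove the right-hand side tends to $0$.

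Next, I show that $\partial D'_s\to\partial D'_0$ in Hausdorff distance. The boundary $\partial D'_s$ is the union of three arcs: the horizontal segment $[0,1]\times\{h_0\}$, which is the same for every $s$; the vertical segment $\{0\}\times[y_s(0)+h_0,h_0]$, which converges to $\{0\}\times[y_0(0)+h_0,h_0]$ because $y_s(0)\to y_0(0)$; and the curve $\Gamma_s=\{(r_s(t),y_s(t)+h_0):t\in[0,T]\}$, which converges in Hausdorff distance to $\Gamma_0$ by the uniform convergence of $r_s\to r_0$ and $y_s\to y_0$ on $[0,T]$. Writing $\delta_s:=d_H(\partial D'_s,\partial D'_0)$, we get $\delta_s\to 0$ as $s\to 0$.

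Finally, I convert Hausdorff convergence into uniform convergence of the $f_s$. Fix $\hat\alpha\in[0,\pi]$ and let $P=f_0(\hat\alpha)\,e_{\hat\alpha}\in\partial D'_0$, where $e_{\hat\alpha}$ is the unit vector at polar angle $\hat\alpha$. By definition of $\delta_s$ there exists $P'=f_s(\hat\alpha')\,e_{\hat\alpha'}\in\partial D'_s$ with $|P-P'|\le\delta_s$. Expanding
\[
|P-P'|^2=\bigl(f_0(\hat\alpha)-f_s(\hat\alpha')\bigr)^2+4f_0(\hat\alpha)\,f_s(\hat\alpha')\sin^2\!\tfrac{\hat\alpha-\hat\alpha'}{2}
\]
and using $f_0,f_s\ge\ve/2$, I deduce both $|f_0(\hat\alpha)-f_s(\hat\alpha')|\le\delta_s$ and $|\hat\alpha-\hat\alpha'|\le c\,\delta_s$ for a constant $c=c(\ve)$. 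Combining this with the uniform Lipschitz bound $|f_s(\hat\alpha)-f_s(\hat\alpha')|\le L|\hat\alpha-\hat\alpha'|$ and applying the triangle inequality gives $|f_s(\hat\alpha)-f_0(\hat\alpha)|\le(1+cL)\,\delta_s$, uniformly in $\hat\alpha$. Letting $s\to 0$ finishes the proof. There is no serious obstacle; the step that needs genuine care is the last one, where the uniform Lipschitz constant and the uniform lower bound $\ve/2$ provided by Lemma~\ref{angle} are exactly what makes Hausdorff convergence of the three boundary arcs translate into uniform convergence of the polar radii.
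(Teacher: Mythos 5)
Your proof is correct in substance and takes a genuinely different route from the paper. The paper works directly with the parametrizations produced in Lemma~\ref{angle}: it proves $\hat r_s(t) \to \hat r_0(t)$ and $\hat\alpha_s(t) \to \hat\alpha_0(t)$ uniformly in $t$ (the latter requiring a short case analysis near $t=0$ where the curve meets the axis, and on $[\eps, T]$ where $r(t)\ge\eps$) and then applies the triangle inequality
$|f_s(\hat\alpha_0(t)) - f_0(\hat\alpha_0(t))| \le \eta\,|\hat\alpha_s(t)-\hat\alpha_0(t)| + |\hat r_s(t)-\hat r_0(t)|$.
You instead pass to Hausdorff convergence of the boundary curves and recover the angular deviation $|\hat\alpha-\hat\alpha'|$ from the law-of-cosines expansion together with the lower bound $f_s\ge\eps/2$. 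Both arguments are anchored in the same facts from Lemma~\ref{angle} (uniform Lipschitz constant and uniform positive lower bound on $f_s$); yours is slightly more coordinate-free and avoids the separate treatment of $t\in[0,\eps]$, while the paper's avoids the Hausdorff-distance detour and works parametrically throughout.

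One imprecision in your Step~3: you claim that by definition of $\delta_s = d_H(\partial D'_s, \partial D'_0)$ there exists $P' = f_s(\hat\alpha')\,e_{\hat\alpha'}\in\partial D'_s$ near $P$. The closest point $P'\in\partial D'_s$ need not lie on the graph $\{\hat r = f_s(\hat\alpha)\}$; it could lie on the vertical segment $R(D'_s)$, which is part of the topological boundary $\partial D'_s$ but is not of the form $f_s(\hat\alpha')\,e_{\hat\alpha'}$ (those rays sit \emph{inside} the star-shaped graph, at $\hat r < f_s(0)$ or $\hat r < f_s(\pi)$). The fix is to take the Hausdorff distance between the graph portions $\overline{F(D'_s)}\cup\overline{B(D'_s)}$ and $\overline{F(D'_0)}\cup\overline{B(D'_0)}$ only; these are exactly the curves $\{\hat r = f_s(\hat\alpha): \hat\alpha\in[0,\pi]\}$, and your Hausdorff-convergence argument applies to them verbatim (the $F$ part is common to all $s$, the $B$ part converges uniformly). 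With that replacement every nearby point $P'$ is of the claimed form and the rest of your computation goes through unchanged.
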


\begin{proof}
Let $\hat{r}_s(t)$, $\hat{\alpha}_s(t)$, $f_s(\hat{\alpha})$ be defined as in Lemma~\ref{angle} (see Figure~\ref{fig:3}), but for the domain $D_s$. Note that $f_s(\hat{\alpha}) = f_0(\hat{\alpha})$ for $\hat{\alpha}$ greater than $\hat{\alpha}_s(T) = \hat{\alpha}_0(T)$.

Clearly, $\hat{r}_s(t) \to \hat{r}_0(t)$ as $s \to 0$ uniformly in $t \in [0, T]$. In a similar manner, since $r_s(t) \ge \eps$ for $t \in [\eps, T]$, we have $\hat{\alpha}_s(t) \to \hat{\alpha}_0(t)$ uniformly in $t \in [\eps, T]$. Furthermore, for $t \in [0, \eps]$ we have $y_s(t) + h_0 \le -(\eps - h_0) \le -\eps/2$, and $\hat{\alpha}_s(t) = -\arctan(r_s(t) / (y_s(t) + h_0))$. It follows that $\hat{\alpha}_s(t) \to \hat{\alpha}_0(t)$ uniformly also in $t \in [0, \eps]$. Finally, since all of $f_s$ are Lipschitz functions with Lipschitz constant $\eta$, we have
\begin{align*}
 |f_s(\hat{\alpha}_0(t)) - f_0(\hat{\alpha}_0(t))| & \le |f_s(\hat{\alpha}_0(t)) - f_s(\hat{\alpha}_s(t))| + |f_s(\hat{\alpha}_s(t)) - f_0(\hat{\alpha}_0(t))| \\ & \le \eta |\hat{\alpha}_0(t) - \hat{\alpha}_s(t)| + |\hat{r}_s(t) - \hat{r}_0(t)| ,
\end{align*}
which converges to $0$ uniformly in $t \in [0, T]$. Therefore, $f_s(\hat{\alpha}) \to f_0(\hat{\alpha})$ uniformly in $\hat{\alpha}$, and the lemma follows.
\end{proof}

From now on we will be interested in eigenfunctions of \eqref{lap}--\eqref{ort} corresponding to $\nu_{1,1}$. In the rest of the paper we will use the following notation:
\begin{equation}
\label{psi11}
 \psi(r,y) = \psi_{1,1}(r,y),  \qquad \vp(r,\theta,y) = \psi_{1,1}(r,y) \, \cos \theta , \qquad \nu = \nu_{1,1} . 
\end{equation}
We use here cylindrical coordinates $(r,\theta,y)$ defined in~\eqref{cylindrical}, cf.~\eqref{rep}. By $\varphi(x,y,z)$  we denote the function $\varphi(r,\theta,y)$ written in Cartesian coordinates $(x,y,z)$. Recall that $\varphi(x,y,z)$ is one of the eigenfunctions of \eqref{lap}--\eqref{ort} corresponding to $\nu = \nu_{1,1}$, $\varphi(-x,y,z) = -\varphi(x,y,z)$, and $\psi > 0$ on $D$. We will always assume that $\varphi$ is normalized so that $\int_F \varphi^2 = 1$. Since in this section we discuss the problem~\eqref{lap}--\eqref{ort} for more than one liquid domain, we will indicate the dependence on $W$ in the subscript, as in $\psi_W$, $\varphi_W$, $\nu_W$.

\begin{lemma}
\label{uniform}
Let $W \in \calW(\varepsilon,M,H,1)$. There exist absolute constants $C_1$, $C_2$ such that
$$
\nu_W = \int_W |\nabla \vp_W|^2 \le C_1,
\quad \quad
\int_W \vp^2_W \le C_2.
$$
\end{lemma}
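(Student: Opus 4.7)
The plan is to bound $\nu_W$ by evaluating the Rayleigh quotient of the $m=1$ reduction on an explicit test function, and to deduce the $L^2$ bound on $\vp_W$ from a uniform Friedrichs-type inequality on the class $\calW(\eps,M,H,1)$.

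For the first bound, I would plug the test function $\vp_0(x,y,z)=x$ --- equivalently $\psi_0(r,y)=r$ in the reduction~\eqref{lapD}--\eqref{ortD} --- into the variational characterization of $\nu_{1,1}$. The function $\vp_0\in H^1(W)$ satisfies the orthogonality $\int_F\vp_0=0$ by symmetry, has $|\nabla\vp_0|^2\equiv 1$, and gives $\int_F\vp_0^2=\pi/4$. Hence
\[
\nu_W \;=\; \nu_{1,1}(W) \;\le\; \frac{\int_W|\nabla\vp_0|^2}{\int_F\vp_0^2} \;=\; \frac{4\,|W|}{\pi} \;\le\; 4H \;=:\; C_1,
\]
the last estimate using the inclusion $W\subset U_H=\{x^2+z^2<1,\,-H<y<0\}$ valid for every $W\in\calW(\eps,M,H,1)$. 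For the second bound, I would appeal to a Friedrichs-type inequality
\[
\int_W u^2 \;\le\; C\!\left(\int_W|\nabla u|^2+\int_F u^2\right),\qquad u\in H^1(W),
\]
with $C=C(\eps,M,H)$ independent of $W$ in the class. Applying this to $u=\vp_W$ and using the normalization $\int_F\vp_W^2=1$ together with the first bound gives $\int_W\vp_W^2\le C(C_1+1)=:C_2$, as desired.

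The main obstacle is the uniformity of the constant $C$ in the Friedrichs inequality. I would secure it from Corollary~\ref{inclusion}, which places every $W\in\calW(\eps,M,H,1)$ inside the star-shaped Lipschitz class $\calA(p_0,r_1,r_2,\eta)$ with $r_1,r_2,\eta$ depending only on $\eps,M,H$; by Lemma~\ref{covering} such $W$ admit a uniformly bounded Lipschitz atlas of controlled geometry. Standard Poincar\'e--Wirtinger and trace estimates then hold with constants depending only on these parameters, and combining them (via $\int_W u^2\le 2\int_W(u-\bar u_F)^2+2|W|\,\bar u_F^2$, where $\bar u_F=|F|^{-1}\int_F u$, together with $|\bar u_F|^2\le|F|^{-1}\int_F u^2$ and the uniform volume bound $|W|\le\tfrac{4}{3}\pi r_2^3$) yields the required uniform Friedrichs inequality. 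Alternatively, one may invoke a Rellich-based compactness argument along sequences converging in the metric $d$ of Definition~\ref{distance}, using the uniform extension operator underlying the proof of Lemma~\ref{H32} to rule out blow-up of the optimal constant within the class.
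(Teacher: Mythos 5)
Your proposal does not prove the lemma as stated: the claim is that $C_1$, $C_2$ are \emph{absolute} constants, whereas both of your bounds degrade with the class parameters. Your test-function estimate gives $\nu_W\le 4H$, and your Friedrichs constant is $C(\eps,M,H)$, so your $C_2$ inherits all three parameters. (Also, the inequality $\nu_{1,1}(W)\le\int_W|\nabla\vp_0|^2/\int_F\vp_0^2$ is justified not by $\int_F\vp_0=0$, which only characterizes $\nu_1$, but by the fact that $\vp_0=r\cos\theta$ lies in the $m=1$ sector; you do mention the reduction, so this is a wording issue rather than an error.) The paper obtains genuinely absolute constants by a shorter route: since $W$ is contained in the cylinder $U_H$ with the same free surface, domain monotonicity (Lemma \ref{monotonicity2}) together with the explicit cylinder eigenvalues (Lemma \ref{lem:eigencyl}) gives $\nu_W\le\nu_{1,1}(U_H)=j'_{1,1}\tanh(j'_{1,1}H)<j'_{1,1}$; and for the $L^2$ bound it exploits the oddness of $\vp_W$ in $x$: every section $l(p)$ of $W$ by a line parallel to the $x$-axis is an interval symmetric about $\{x=0\}$ of length at most $2$, so the one-dimensional Poincar\'e inequality for odd functions gives $\int_{l(p)}\vp_W^2\le\frac{4}{\pi^2}\int_{l(p)}(\partial_x\vp_W)^2$, and integrating over the projection yields $\int_W\vp_W^2\le\frac{4}{\pi^2}\int_W|\nabla\vp_W|^2=\frac{4}{\pi^2}\nu_W$. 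The absolute character of these constants is used later verbatim (e.g.\ in Lemma \ref{lem:cylinder}, where the left-hand side is said to be ``bounded above by an absolute constant'').

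Beyond the constants, the core of your second bound --- the uniform Friedrichs inequality --- is asserted rather than proved, and the justification you sketch is insufficient as written: a uniformly bounded Lipschitz atlas (Lemma \ref{covering}) controls local trace estimates but does not by itself control the Poincar\'e--Wirtinger constant, which depends on global connectivity (domains with thin necks have fine local atlases and arbitrarily bad Poincar\'e constants). To make your chain of estimates uniform you would have to use the star-shapedness from Corollary \ref{inclusion} in an essential way (e.g.\ a bi-Lipschitz change of variables onto a fixed ball with constants controlled by $r_1,r_2,\eta$), and your alternative compactness route would additionally require precompactness of the class in the metric $d$ and suitable semicontinuity of the optimal constant, none of which is established. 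All of this machinery is avoidable: the oddness trick above closes the lemma in two lines and is the key idea your proposal misses.
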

\begin{proof}
Note that $W$ is a subset of a cylinder $\{(x,y,z): \, x^2 + z^2 < 1, \, -H < y < 0\}$. By Lemmas \ref{monotonicity2} and \ref{lem:eigencyl} we get $\nu_W < j'_{1,1}$. By the variational characterization of $\nu_W$ and $\int_F \vp_W^2 = 1$ we have $\nu_W = \int_W |\nabla \vp_W|^2$.

Let $W_x$ be the orthogonal projection of $W$ on the $yz$-plane. For any $p \in W_x$, let $l(p)$ be the cross-section of $W$ with the line parallel to the $x$-axis and passing through $p$. Then $l(p)$ is an interval symmetric with respect to the $yz$-plane, and $l(p)$ has length $|l(p)| \le 2$. Since $\vp_W(x,y,z)$ is odd with respect to the $x$-variable, for any $p \in W_x$ we have
$$
\int_{l(p)} \vp_W^2
\le \frac{|l(p)|^2}{\pi^2} \int_{l(p)} \left( \frac{\partial \vp_W}{\partial x} \right)^2
\le \frac{4}{\pi^2} \int_{l(p)} \left( \frac{\partial \vp_W}{\partial x} \right)^2
$$
Hence
\begin{align*}
\int_{W} \vp_W^2
& = \int_{W_x} \left(\int_{l(p)} \vp_W^2 \right) \D p
\le \frac{4}{\pi^2} \int_{W_x} \left( \int_{l(p)} \left( \frac{\partial \vp_W}{\partial x} \right)^2\right) \D p \\
& = \frac{4}{\pi^2} \int_{W} \left( \frac{\partial \vp_W}{\partial x} \right)^2
\le \frac{4}{\pi^2} \int_{W} |\nabla \vp_W|^2 . \qedhere
\end{align*}
\end{proof}

The following lemma is analogous to Lemma 2.5 in \cite{JN2000}.

\begin{lemma}
\label{disteigenvalues}
There is a constant $C(\varepsilon,M,H)$ such that if $W_1, W_2 \in \calW(\varepsilon,M,H,1)$ and $\nu_{W_1}$, $\nu_{W_2}$ are corresponding eigenvalues $\nu$ for domains $W_1$, $W_2$ then 
$$
|\nu_{W_1} - \nu_{W_2}| \le C(\varepsilon,M,H) d(W_1,W_2)^{1/3}.
$$
\end{lemma}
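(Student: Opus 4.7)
The argument will mimic Lemma~2.5 of \cite{JN2000}. The three main ingredients are: (a) the variational characterization of $\nu_{1,1}(W)$; (b) the $H^{3/2}$-extension with $L^3$-bounded gradient from Lemma~\ref{H32}, which applies because $W_1, W_2 \in \calW(\varepsilon, M, H, 1) \subset \calA(p_0, r_1, r_2, \eta)$ by Corollary~\ref{inclusion}; and (c) the polar representation $W_i = \{p : 0 < |p - p_0| < f_i(\omega)\} \cup \{p_0\}$ appearing in Definition~\ref{distance}. By symmetry in $W_1, W_2$ it suffices to show $\nu_{W_2} - \nu_{W_1} \le C(\varepsilon, M, H)\, d(W_1, W_2)^{1/3}$ assuming $\nu_{W_2} \ge \nu_{W_1}$.

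Since a function $\varphi = \psi(r,y)\cos\theta$ automatically satisfies the orthogonality condition \eqref{ort}, the 3D Rayleigh quotient yields the variational formula
\[
\nu_{1,1}(W) \;=\; \inf \frac{\int_W |\nabla \varphi|^2}{\int_F \varphi^2}
\]
over non-zero $\varphi \in H^1(W)$ of the form $\varphi(r,\theta,y) = \psi(r,y)\cos\theta$. I take $\varphi_{W_1}=\psi_{W_1}(r,y)\cos\theta$ normalized so that $\int_F \varphi_{W_1}^2 = 1$ (hence $\int_{W_1}|\nabla \varphi_{W_1}|^2 = \nu_{W_1}$), extend it to $\tilde{\varphi}\in H^{3/2}(\R^3)$ via Lemma~\ref{H32} (which gives $\|\nabla\tilde{\varphi}\|_{L^3(\R^3)} \le C(\varepsilon,M,H)$), and then restore the $\cos\theta$-symmetry by projecting:
\[
 g_0(r,y) \;=\; \frac{1}{\pi}\int_0^{2\pi}\tilde{\varphi}(r,\theta',y)\cos\theta'\,\D\theta', \qquad g(r,\theta,y) \;=\; g_0(r,y)\cos\theta.
\]
Since $\tilde{\varphi} = \varphi_{W_1} = \psi_{W_1}\cos\theta$ on $W_1$, a direct computation gives $g = \varphi_{W_1}$ on $W_1$, and in particular $g = \varphi_{W_1}$ on the common free surface $F = F_1 = F_2$ (the unit disk). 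Writing $|\nabla g|^2 = g_r^2 + g_y^2 + g_\theta^2/r^2$, differentiating under the integral for $g_r$, $g_y$, and using integration by parts in $\theta'$ to rewrite $g_0 = \pi^{-1}\int_0^{2\pi}\tilde{\varphi}_{\theta'}\sin\theta'\,\D\theta'$ for the delicate $g_\theta/r = -g_0\sin\theta/r$ component, Hölder's inequality then yields $\|\nabla g\|_{L^3(\R^3)} \le C\,\|\nabla\tilde{\varphi}\|_{L^3(\R^3)} \le C(\varepsilon,M,H)$.

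Using $g|_{W_2}$ as a test function for $\nu_{W_2}$ and noting $\int_F g^2 = \int_F \varphi_{W_1}^2 = 1$, we obtain $\nu_{W_2} \le \int_{W_2}|\nabla g|^2$. Splitting $W_2 \subset W_1 \cup (W_1\triangle W_2)$,
\[
\nu_{W_2} \;\le\; \int_{W_1}|\nabla g|^2 + \int_{W_1\triangle W_2}|\nabla g|^2 \;=\; \nu_{W_1} + \int_{W_1\triangle W_2}|\nabla g|^2,
\]
and by Hölder's inequality with exponents $3$ and $3/2$,
\[
\int_{W_1\triangle W_2}|\nabla g|^2 \;\le\; \|\nabla g\|_{L^3(\R^3)}^2\, |W_1\triangle W_2|^{1/3} \;\le\; C(\varepsilon,M,H)\,|W_1\triangle W_2|^{1/3}.
\]
Finally, the polar representation from Definition~\ref{distance} yields
\[
 |W_1\triangle W_2| \;=\; \int_{S^2}\frac{|f_1(\omega)^3 - f_2(\omega)^3|}{3}\,\D\omega \;\le\; C(r_2)\,\|f_1 - f_2\|_\infty \;=\; C(\varepsilon,M,H)\,d(W_1,W_2),
\]
which combined with the previous displays proves the lemma. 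I expect the principal technical obstacle to be verifying that the projection onto the $\cos\theta$-symmetric subspace preserves the $L^3$-gradient bound: the apparent singularity in $g_\theta/r = -g_0 \sin\theta/r$ is harmless precisely because of the integration-by-parts identity $g_0/r = \pi^{-1} r^{-1}\int\tilde{\varphi}_{\theta'}\sin\theta'\,\D\theta'$, which absorbs the $1/r$ into the angular component of $\nabla \tilde{\varphi}$.
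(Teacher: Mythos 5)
Your proposal is correct and shares the paper's skeleton: extend the eigenfunction via Lemma~\ref{H32} (through Corollary~\ref{inclusion}), insert the extension as a test function in the other domain, control the error on the symmetric difference by H\"older's inequality with exponents $3$ and $3/2$ together with the $L^3$ gradient bound, and estimate $|W_1\triangle W_2|\le C(\varepsilon,M,H)\,d(W_1,W_2)$ from the polar representation of Definition~\ref{distance}. The one genuine difference is how the extension is turned into an admissible test function for $\nu_{1,1}$. The paper simply replaces $\tilde\vp$ by its odd part in $x$ (which trivially preserves all the norms and agrees with $\vp$ on $W$) and then uses that $\nu_{1,1}(W)$ is the minimum of the Rayleigh quotient over odd-in-$x$ functions of $H^1(W)$ --- the fact recorded before \eqref{perpg}, which rests on $\nu_{m,1}$ being nondecreasing in $m$. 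You instead project onto the $\cos\theta$ Fourier mode: this makes the variational characterization immediate (it is exactly the $m=1$ reduced problem, no comparison with $m=3,5,\dots$ needed), but at the price of having to show that the projection preserves the $L^3$ bound on the gradient, in particular for the component $g_0\sin\theta/r$; your integration-by-parts identity in $\theta'$ (up to a sign, $g_0=-\pi^{-1}\int_0^{2\pi}\tilde\vp_{\theta'}\sin\theta'\,\D\theta'$) combined with Minkowski's integral inequality and $|\tilde\vp_{\theta'}|/r\le|\nabla\tilde\vp|$ does handle this, so the extra step is sound, just more technical than the paper's odd reflection. Two small points: to get constants depending only on $(\varepsilon,M,H)$ from Lemma~\ref{H32} you must also invoke the uniform bounds $\nu_W\le C_1$, $\int_W\vp_W^2\le C_2$ of Lemma~\ref{uniform}; and testing the extension of $\vp_{W_1}$ on $W_2$ rather than the reverse (as the paper does) is immaterial, since the roles are interchangeable within the class $\calW(\varepsilon,M,H,1)$.
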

\begin{proof}
By Corollary \ref{inclusion}, $W_1, W_2 \in \calA(p_0,r_1,r_2,\eta)$ for $p_0 = (0,- \eps/(2M), 0)\in \R^3$ and some $0 < r_1 < r_2$ and $\eta$ depending only on $\varepsilon$, $M$, $H$. Using this, Lemma \ref{uniform} and Lemma \ref{H32} we obtain that $\varphi_i\in \Hs(W_i)$ and $\|\vp_{W_i}\|_{\Hs(W_i)} \le C_1 = C_1(\varepsilon,M,H)$ for $i = 1, 2$. Also by Lemma~\ref{H32} there exist extensions $\tilde{\vp}_{W_i}$ ($i = 1, 2$)  of functions $\vp_{W_i}$ such that $\supp(\tilde{\vp}_{W_i}) \subset B(p_0,r_2 + 1)$ and $\|\tilde{\vp}_{W_i}\|_{\Hs(\R^3)} \le C_2 = C_2(\varepsilon,M,H)$. By a symmetrization argument, we may also assume that these extensions $\tilde{\vp}_{W_i}$ are again odd functions of $x$. Lemma \ref{H32} gives also that $\|\nabla\tilde{\vp}_{W_i}\|_{L^3(\R^3)} \le C_3 = C_3(\varepsilon,M,H)$.

We have 
\begin{equation}
\label{W1nabla}
\int_{W_1} |\nabla \tilde{\vp}_{W_2}|^2 =
\int_{W_2} |\nabla {\vp}_{W_2}|^2 +
\int_{W_1 \setminus W_2} |\nabla \tilde{\vp}_{W_2}|^2 -
\int_{W_2 \setminus W_1} |\nabla {\vp}_{W_2}|^2.
\end{equation}
By H{\"o}lder's inequality (for $p = 3$, $q = 3/2$) we get
\begin{align*}
\int_{W_1 \setminus W_2} & |\nabla \tilde{\vp}_{W_2}|^2
= \int_{\R^3} 1_{W_1 \setminus W_2} |\nabla \tilde{\vp}_{W_2}|^2 
\le \|1_{W_1 \setminus W_2}\|_{L^3(\R^3)} \| \, |\nabla \tilde{\vp}_{W_2}|^2 \|_{L^{3/2}(\R^3)} \\
& = |W_1 \setminus W_2|^{1/3} \|\nabla \tilde{\vp}_{W_2}\|_{L^3(\R^3)}^2
\le C_4 |W_1 \setminus W_2|^{1/3} 
\le C_5 d(W_1,W_2)^{1/3},
\end{align*}
where $C_4$, $C_5$ are constants depending only on $\varepsilon$, $M$ and $H$.

Note also that $\int_{W_2} |\nabla {\vp}_{W_2}|^2 = \nu_{W_2}$ and 
$
\int_F \tilde{\vp}^2_{W_2} = 
\int_F {\vp}^2_{W_2} = 1
$.
By the variational characterization of $\nu$ and (\ref{W1nabla}) it follows that 
$$
\nu_{W_1} \le \frac{\int_{W_1} |\nabla \tilde{\vp}_{W_2}|^2}{\int_F \tilde{\vp}^2_{W_2}(x,0,z) \, dx \, dz} 
\le \nu_{W_2} + C_5 d(W_1,W_2)^{1/3}.
$$
In the similar way we get $\nu_{W_2} \le \nu_{W_1} + C_5 d(W_1,W_2)^{1/3}$.
\end{proof}

Let $W \in \calW(\varepsilon,M,H,1)$. Recall that the eigenfunction $\vp_W$ has the lowest eigenvalue $\nu_W$ among all the eigenfunctions of the problem (\ref{lap} - \ref{ort}) on $W$ which are odd functions of $x$, and the next such eigenvalue is strictly greater. We  denote it by $\nu_W + \ve_W$, where $\ve_W > 0$ is a constant depending on $W$. Hence if $g \in H^1(W)$, $\int_{F} \vp_W g = 0$ and $g$ is odd with respect to $x$, then
\begin{equation}
\label{perpg}
\int_W |\nabla g|^2 \ge (\nu_W + \ve_W) \int_F g^2 .
\end{equation}

\begin{lemma}
\label{W1W2}
Let $W_1, W_2 \in \calW(\varepsilon,M,H,1)$. There is a constant $C= C(\ve,\ve_{W_1},M,H)$ such that
\begin{align}
\label{phi1phi2A}
& \int_F |\vp_{W_1} - \vp_{W_2}|^2
\le C d(W_1,W_2)^{1/3},
\\ 
\label{phi1phi2B}
& 1 \ge \int_F \vp_{W_1} \vp_{W_2} \ge
1 - \frac{C}{2} d(W_1,W_2)^{1/3}.
\end{align}
\end{lemma}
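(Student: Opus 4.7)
The plan is to use $u = \tilde{\vp}_{W_2}|_{W_1}$, where $\tilde{\vp}_{W_2}$ is the extension to $\R^3$ provided by Lemma \ref{H32}, as a test function on $W_1$ and exploit the spectral gap inequality \eqref{perpg}. By symmetrizing (replacing the extension by $\tfrac{1}{2}(\tilde{\vp}_{W_2}(x,y,z) - \tilde{\vp}_{W_2}(-x,y,z))$) we may assume that $\tilde{\vp}_{W_2}$ is odd in $x$; this preserves the $\Hs$-bound and the support property, and agrees with $\vp_{W_2}$ on $W_2$ because $W_2$ is axially symmetric and $\vp_{W_2}$ is odd. Since $F \subset \partial W_2$ and $\tilde{\vp}_{W_2} = \vp_{W_2}$ on $W_2$, the trace of $u$ on $F$ equals $\vp_{W_2}|_F$, so $\int_F u^2 = 1$. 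Writing $u = c\,\vp_{W_1} + g$ with $c := \int_F u\,\vp_{W_1}$ then gives $\int_F g\,\vp_{W_1} = 0$, $c^2 + \int_F g^2 = 1$, and $g$ odd in $x$.

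Two bounds on $\int_{W_1}|\nabla u|^2$ will produce the key estimate on $1 - c^2$. From above, the H\"older inequality argument from the proof of Lemma \ref{disteigenvalues} yields $\int_{W_1}|\nabla u|^2 \le \int_{W_2}|\nabla \vp_{W_2}|^2 + \int_{W_1\setminus W_2}|\nabla\tilde{\vp}_{W_2}|^2 \le \nu_{W_2} + C_1\, d(W_1,W_2)^{1/3}$ with $C_1 = C_1(\ve,M,H)$. From below, Green's identity (using $\Delta\vp_{W_1} = 0$, the boundary conditions \eqref{nu}--\eqref{nc}, and $\int_F \vp_{W_1} g = 0$) gives $\int_{W_1}\nabla\vp_{W_1}\cdot\nabla g = \nu_{W_1}\int_F \vp_{W_1} g = 0$, whence $\int_{W_1}|\nabla u|^2 = c^2\nu_{W_1} + \int_{W_1}|\nabla g|^2$. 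Since $g$ is odd in $x$ and $L^2(F)$-orthogonal to $\vp_{W_1}$, the spectral gap \eqref{perpg} yields $\int_{W_1}|\nabla g|^2 \ge (\nu_{W_1}+\ve_{W_1})(1-c^2)$. Combining the two displays and invoking Lemma \ref{disteigenvalues} to absorb $\nu_{W_2} - \nu_{W_1}$ gives $\ve_{W_1}(1-c^2) \le C_2\, d(W_1,W_2)^{1/3}$ for some $C_2 = C_2(\ve,M,H)$.

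To finish, the sign of $c$ must be pinned down. On $F$ we have $\vp_{W_i}(r,\theta,0) = \psi_{W_i}(r,0)\cos\theta$, and the sign convention $\psi_{W_i} > 0$ on $D_i$ together with continuity up to the boundary force $\psi_{W_i}(r,0) \ge 0$ for $r \in [0,1)$; hence $\vp_{W_1}\vp_{W_2} \ge 0$ on $F$ and $c = \int_F \vp_{W_2}\vp_{W_1} \ge 0$. Setting $K := C_2/\ve_{W_1}$, the bound $1-c^2 \le K\,d(W_1,W_2)^{1/3}$ with $c \in [0,1]$ implies $1 - c \le K\,d(W_1,W_2)^{1/3}$: this is trivial when $K\,d(W_1,W_2)^{1/3} \ge 1$ (since $1-c \le 1$), and otherwise follows from $c \ge \sqrt{1 - K\,d(W_1,W_2)^{1/3}} \ge 1 - K\,d(W_1,W_2)^{1/3}$. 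The identities $\int_F|\vp_{W_1} - \vp_{W_2}|^2 = 2(1-c)$ and $\int_F \vp_{W_1}\vp_{W_2} = c$, combined with Cauchy--Schwarz for the upper bound $c \le 1$, then deliver both inequalities with $C = 2K = C(\ve,\ve_{W_1},M,H)$.

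The main obstacle is precisely the sign-pinning of $c$: the energy inequality alone only forces $|c|$ close to $1$, and excluding the branch $c$ near $-1$ rests on the explicit cylindrical decomposition $\vp = \psi\cos\theta$ together with the nonnegativity of $\psi$ up to the free surface. Everything else is a routine implementation of the Jerison--Nadirashvili testing strategy enabled by the uniform $\Hs$-extension in Lemma \ref{H32} and the eigenvalue continuity in Lemma \ref{disteigenvalues}.
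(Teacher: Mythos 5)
Your proposal is correct and follows essentially the same route as the paper's proof: use the $\Hs$-extension of Lemma~\ref{H32} to make $\tilde\vp_{W_2}$ an admissible odd test function on $W_1$, decompose it on $F$ into its $\vp_{W_1}$-component plus an orthogonal remainder $g$, combine the H\"older-type upper bound on $\int_{W_1}|\nabla\tilde\vp_{W_2}|^2$ with the spectral-gap lower bound \eqref{perpg} and Green's identity, and finally pin the sign of $\int_F\vp_{W_1}\vp_{W_2}$ from the common sign pattern of $\vp_{W_i}$ on $F_\pm$. The only cosmetic difference is that the paper first reduces \eqref{phi1phi2B} to \eqref{phi1phi2A} via a polarization identity and works with $\alpha_1$ and $\alpha_2^2 = 1-\alpha_1^2$, while you argue directly from $1-c^2 \le Kd^{1/3}$; the substance is identical.
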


\begin{proof}
Recall that $\vp_{W_i}$ ($i = 1, 2$) are normalized so that $\int_F |\vp_{W_i}|^2 = 1$, $\vp_{W_i} > 0$ on $W_+$ and $\vp_{W_i} < 0$ on $W_-$, where $W_+ = \{(x,y,z) \in W: \, x > 0\}$, $W_- = \{(x,y,z) \in W: \, x < 0\}$. 

First note that (\ref{phi1phi2B}) follows easily from (\ref{phi1phi2A}) and the equality
$$
2 \int_F \vp_{W_1} \vp_{W_2} = 
- \int_F (\vp_{W_1} - \vp_{W_2})^2 + \int_F \vp_{W_1}^2 + \int_F \vp_{W_2}^2 
= 2 - \int_F (\vp_{W_1} - \vp_{W_2})^2.
$$
Hence, it is sufficient to show (\ref{phi1phi2A}). Put $\alpha_1 = \int_F \vp_{W_1} \vp_{W_2}$. By the normalization of $\vp_{W_1}$ and $\vp_{W_2}$ we have $0 < \alpha_1 \le 1$. Let $g = \tilde{\vp}_{W_2} - \alpha_1 \tilde{\vp}_{W_1}$, where $\tilde{\vp}_{W_1}$, $\tilde{\vp}_{W_2}$ are the extensions of ${\vp}_{W_1}$, ${\vp}_{W_2}$ defined in the proof of Lemma \ref{disteigenvalues}. On $W_1 \cap W_2$, $\tilde{\vp}_{W_i} = {\vp}_{W_i}$ are continuous ($i = 1, 2$), and so also $g$ is continuous on $W_1 \cap W_2$. In particular $g$ is continuous on $F$. We also have $g \in L^2(F)$, and by the definition of $\alpha_1$ we have 
\begin{equation}
\label{W1g}
\int_F \vp_{W_1} g = 0.
\end{equation}
Furthermore, $g \in \Hs(W_1) \subset H^1(W_1)$. By arguments as in the proof of Lemma \ref{disteigenvalues},
\begin{equation}
\label{nablaW1g}
\begin{aligned}
\nu_{W_2} & = \int_{W_2} |\nabla {\vp}_{W_2}|^2
= \int_{W_1} |\nabla \tilde{\vp}_{W_2}|^2 +
\int_{W_2 \setminus W_1} |\nabla \tilde{\vp}_{W_2}|^2 -
\int_{W_1 \setminus W_2} |\nabla \tilde{\vp}_{W_2}|^2 \\
& \ge \int_{W_1} |\nabla \tilde{\vp}_{W_2}|^2 - C_1 d(W_1,W_2)^{1/3} \\
& = \int_{W_1} |\nabla(\alpha_1 \vp_{W_1} + g)|^2 - C_1 d(W_1,W_2)^{1/3} \\
& = \alpha_1^2 \int_{W_1} |\nabla \tilde{\vp}_{W_1}|^2 
+ 2 \alpha_1 \int_{W_1} \nabla \tilde{\vp}_{W_1} \nabla g
+ \int_{W_1} |\nabla g|^2 - C_1 d(W_1,W_2)^{1/3},
\end{aligned}
\end{equation}
where $C_1 = C_1(\ve,M,H)$. We have $\int_{W_1} |\nabla \tilde{\vp}_{W_1}|^2 = \nu_{W_1}$, and by the Green's formula and (\ref{W1g}) we get
\begin{align*}
\int_{W_1} \nabla \tilde{\vp}_{W_1} \nabla g 
&= \int_{B_1} \left(\frac{\partial \vp_{W_1}}{\partial n}\right) g
+ \int_{F} \left(\frac{\partial \vp_{W_1}}{\partial y}\right) g 
- \int_{W_1} (\Delta \vp_{W_1}) g \\
&= \nu_{W_1} \int_F \vp_{W_1} g = 0.
\end{align*}
Since $g$ is odd with respect to $x$ (because $\tilde{\vp}_{W_1}$ and $\tilde{\vp}_{W_2}$ are odd) and  $g \in H^1(W_1)$, by (\ref{W1g}) and (\ref{perpg}) we get
$$
\int_{W_1} |\nabla g|^2 \ge (\nu_{W_1} + \ve_{W_1}) \int_F g^2 .
$$
Put $\alpha_2 = (\int_F g^2)^{1/2}$. By the definition of $\alpha_1$ and the normalization of $\vp_{W_2}$ we have $\alpha_1^2 + \alpha_2^2 = 1$. From (\ref{nablaW1g}) we get
\begin{align*}
\nu_{W_2} &\ge 
\alpha_1^2 \nu_{W_1} + 
\alpha_2^2 (\nu_{W_1} + \ve_{W_1})
 - C_1 d(W_1,W_2)^{1/3} \\
&= \nu_{W_1} + \alpha_2^2 \ve_{W_1} - C_1 d(W_1,W_2)^{1/3},
\end{align*}
where $C_1 = C_1(\ve,M,H)$. Using this and Lemma \ref{disteigenvalues} we obtain
$$
\alpha_2^2 \le \frac{\nu_{W_2} - \nu_{W_1} + C_1 d(W_1,W_2)^{1/3}}{\ve_{W_1}} 
\le \frac{C_2 d(W_1,W_2)^{1/3}}{\ve_{W_1}},
$$
where $C_2 = C_2(\ve,M,H)$. Finally we have
\[
 \int_F |\vp_{W_1} - \vp_{W_2}|^2 = 2 - 2 \alpha_1 \le 2 - 2 \alpha_1^2
= 2 \alpha_2^2 \le \frac{2 C_2 d(W_1,W_2)^{1/3}}{\ve_{W_1}}.
\]
Here we used the fact that $0 \le \alpha_1 \le 1$.
\end{proof}

\begin{lemma}
\label{lem:nablaW1W2}
Let $W_1, W_2 \in \calW(\ve,M,H,1)$. There is a constant $C = C(\ve,\ve_{W_1},M,H)$ such that
\begin{equation}
\label{eq:nablaW1W2}
\int_{W_1 \cap W_2} |\nabla(\vp_{W_1} - \vp_{W_2})|^2 \le 
C d(W_1,W_2)^{1/3}.
\end{equation}
\end{lemma}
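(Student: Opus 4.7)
The plan is to imitate the strategy that has been used throughout Section~3: expand the square, replace integrals over $W_1 \cap W_2$ by integrals over $W_1$ (or $W_2$) up to small error terms controlled by $|W_i \setminus W_j| \le C d(W_1,W_2)$, and use Green's formula to reduce a cross term to a boundary integral over $F$ that has already been estimated in Lemma~\ref{W1W2}.

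More concretely, first I would expand
\begin{equation*}
\int_{W_1 \cap W_2} |\nabla(\vp_{W_1} - \vp_{W_2})|^2 = \int_{W_1 \cap W_2} |\nabla \vp_{W_1}|^2 + \int_{W_1 \cap W_2} |\nabla \vp_{W_2}|^2 - 2 \int_{W_1 \cap W_2} \nabla \vp_{W_1} \cdot \nabla \vp_{W_2} .
\end{equation*}
The first two terms are immediately bounded above by $\nu_{W_1}$ and $\nu_{W_2}$, respectively. For the cross term, I would use the extensions $\tilde{\vp}_{W_1}, \tilde{\vp}_{W_2} \in H^{3/2}(\R^3)$ from Lemma~\ref{H32} and write
\begin{equation*}
\int_{W_1 \cap W_2} \nabla \vp_{W_1} \cdot \nabla \vp_{W_2} = \int_{W_1} \nabla \vp_{W_1} \cdot \nabla \tilde{\vp}_{W_2} - \int_{W_1 \setminus W_2} \nabla \vp_{W_1} \cdot \nabla \tilde{\vp}_{W_2} .
\end{equation*}
Green's formula applied to $\vp_{W_1}$ (which is harmonic in $W_1$, has zero normal derivative on $B_1$, and satisfies $\partial_y \vp_{W_1} = \nu_{W_1} \vp_{W_1}$ on $F$, while $\tilde{\vp}_{W_2} = \vp_{W_2}$ on $F$) turns the first term on the right into $\nu_{W_1} \int_F \vp_{W_1} \vp_{W_2}$.

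The second term is handled exactly as in the proof of Lemma~\ref{disteigenvalues}: by the Cauchy--Schwarz inequality followed by H\"older's inequality with exponents $3$ and $3/2$,
\begin{equation*}
\left|\int_{W_1 \setminus W_2} \nabla \vp_{W_1} \cdot \nabla \tilde{\vp}_{W_2}\right|^2 \le \left(\int_{W_1 \setminus W_2} |\nabla \tilde{\vp}_{W_1}|^2\right)\left(\int_{W_1 \setminus W_2} |\nabla \tilde{\vp}_{W_2}|^2\right) \le C |W_1 \setminus W_2|^{2/3} ,
\end{equation*}
with $C = C(\ve,M,H)$ coming from Lemma~\ref{H32}; since $|W_1 \setminus W_2| \le C' d(W_1,W_2)$, this contribution is $O(d(W_1,W_2)^{1/3})$.

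Collecting these estimates and inserting the lower bound $\int_F \vp_{W_1} \vp_{W_2} \ge 1 - (C/2) d(W_1,W_2)^{1/3}$ from Lemma~\ref{W1W2} gives
\begin{equation*}
\int_{W_1 \cap W_2} |\nabla(\vp_{W_1} - \vp_{W_2})|^2 \le (\nu_{W_2} - \nu_{W_1}) + \nu_{W_1}\, C\, d(W_1,W_2)^{1/3} + C'' d(W_1,W_2)^{1/3} .
\end{equation*}
Finally, $|\nu_{W_1} - \nu_{W_2}| \le C d(W_1,W_2)^{1/3}$ by Lemma~\ref{disteigenvalues}, and $\nu_{W_1}$ is uniformly bounded by Lemma~\ref{uniform}, so the whole right hand side is bounded by $C(\ve,\ve_{W_1},M,H)\, d(W_1,W_2)^{1/3}$. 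No step looks genuinely difficult: the main (and only) subtlety is keeping track of all constants so that they depend only on $\ve, \ve_{W_1}, M, H$, which is automatic because each of the auxiliary estimates used above has that dependence structure.
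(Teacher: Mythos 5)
Your proof is correct and follows essentially the same route as the paper's: expand the square, control the diagonal terms by $\nu_{W_1}$ and $\nu_{W_2}$, convert the cross term via Green's formula to $\nu_{W_1}\int_F\vp_{W_1}\vp_{W_2}$ (estimated by Lemma~\ref{W1W2}), bound the mismatch over $W_1\setminus W_2$ with H\"older and the $L^3$ gradient bound from Lemma~\ref{H32}, and finish with Lemmas~\ref{disteigenvalues} and~\ref{uniform}. The only difference is cosmetic bookkeeping: the paper first bounds $\int_{W_1\cap W_2}|\nabla(\vp_{W_1}-\tilde\vp_{W_2})|^2$ by $\int_{W_1}|\nabla(\vp_{W_1}-\tilde\vp_{W_2})|^2$ and then expands, whereas you expand over $W_1\cap W_2$ and adjust each term to an integral over $W_1$ or $W_2$; the estimates used are the same.
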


\begin{proof}
We have
$$
\int_{W_1 \cap W_2} |\nabla(\vp_{W_1} - \vp_{W_2})|^2
\le \int_{W_1} |\nabla \vp_{W_1}|^2 
+ \int_{W_1} |\nabla \tilde{\vp}_{W_2}|^2
- 2 \int_{W_1} \nabla \vp_{W_1} \nabla \tilde{\vp}_{W_2}.
$$
Note that $\int_{W_1} |\nabla \vp_{W_1}|^2 = \nu_{W_1}$ and, as in the proof of Lemma \ref{disteigenvalues},
\begin{align*}
\int_{W_1} |\nabla \tilde{\vp}_{W_2}|^2
&=
\int_{W_2} |\nabla \tilde{\vp}_{W_2}|^2 -
\int_{W_2 \setminus W_1} |\nabla \tilde{\vp}_{W_2}|^2 +
\int_{W_1 \setminus W_2} |\nabla \tilde{\vp}_{W_2}|^2 \\
&\le \nu_{W_2} + C_1 d(W_1,W_2)^{1/3} \\
&\le \nu_{W_1} + C_2 d(W_1,W_2)^{1/3}
\end{align*}
where $C_1 = C_1(\ve,M,H)$ and $C_2 = C_2(\ve,M,H)$. By the Green's formula and Lemma ~\ref{W1W2} we obtain
\begin{align*}
\int_{W_1} \nabla \vp_{W_1} \nabla \tilde{\vp}_{W_2}
&= \int_{B_1} \left(\frac{\partial \vp_{W_1}}{\partial n}\right) \tilde{\vp}_{W_2}
+ \int_{F} \left(\frac{\partial \vp_{W_1}}{\partial y}\right) \tilde{\vp}_{W_2} 
- \int_{W_1} (\Delta \vp_{W_1}) \tilde{\vp}_{W_2} \\
&= \nu_{W_1} \int_F \vp_{W_1} \vp_{W_2} \\
&\ge \nu_{W_1} - \frac{\nu_{W_1} C_3}{2} d(W_1,W_2)^{1/3} \\
&\ge \nu_{W_1} - C_4 d(W_1,W_2)^{1/3},
\end{align*}
where $C_3 = C_3(\ve,\ve_{W_1},M,H)$ and $C_4 = C_4(\ve,\ve_{W_1},M,H)$. The last inequality follows from Lemma \ref{uniform}. Finally, we obtain 
$$
\int_{W_1 \cap W_2} |\nabla(\vp_{W_1} - \vp_{W_2})|^2
\le 2 \nu_{W_1} + C_2 d(W_1,W_2)^{1/3} - 2 \nu_{W_1} + 2 C_4 d(W_1,W_2)^{1/3}
$$
which gives the assertion of the lemma.
\end{proof}

\begin{lemma}
\label{lemW1W2}
Let $W_1, W_2 \in \calW(\ve,M,H,1)$. There is a constant $C = C(\ve,\ve_{W_1},M,H)$ such that
\begin{equation}
\label{nablaW1W2}
\int_{W_1 \cap W_2} |\vp_{W_1} - \vp_{W_2}|^2 \le 
C d(W_1,W_2)^{1/3}.
\end{equation}
\end{lemma}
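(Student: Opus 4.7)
\medskip

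\noindent\textbf{Proof proposal.}
The plan is to upgrade the already-established $L^2$ bound on the free surface (Lemma~\ref{W1W2}) and the $H^1$-seminorm bound on the intersection (Lemma~\ref{lem:nablaW1W2}) to an $L^2$ bound on the intersection, by integrating along vertical lines. The key geometric observation is that every domain $W = W(D) \in \calW$ satisfies the John condition: in Definition~\ref{classD}(iii)(a) the parametrizing functions $r(t)$ and $y(t)$ are both nondecreasing, which means that the horizontal cross-section of $D$ at level $y$ has maximal $r$-coordinate nondecreasing as $y \nearrow 0$. Consequently, for any $W = W(D) \in \calW$, the vertical segment joining a point of $W$ to the corresponding point of $F$ is entirely contained in $W$. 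This property is preserved under intersection, so $W_1 \cap W_2$ also satisfies the John condition with free surface $F$ (recall that $F$ is the unit disk for both $W_1$ and $W_2$).

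Write $u = \vp_{W_1} - \vp_{W_2}$ and, for $(x,z) \in F$, let $L(x,z) \in [0,H]$ denote the length of the vertical segment $\{(x,y,z) : -L(x,z) < y < 0\}$ lying in $W_1 \cap W_2$. Since $u \in H^1(W_1 \cap W_2)$ and has a well-defined trace on $F$ (both $\vp_{W_i}$ are continuous up to $F$), for almost every $(x,z) \in F$ the one-variable function $y \mapsto u(x,y,z)$ is absolutely continuous and
\[
u(x,y,z) = u(x,0,z) - \int_y^0 \frac{\partial u}{\partial y}(x,s,z) \, \D s, \qquad -L(x,z) < y < 0.
\]
Squaring, applying $(a+b)^2 \le 2a^2 + 2b^2$ and the Cauchy--Schwarz inequality, then integrating in $y$ over $(-L(x,z),0)$ and using Fubini yields
\[
\int_{-L(x,z)}^{0} |u(x,y,z)|^2 \, \D y \le 2H \, |u(x,0,z)|^2 + H^2 \int_{-L(x,z)}^{0} \left|\frac{\partial u}{\partial y}(x,y,z)\right|^2 \D y.
\]
Integrating over $(x,z) \in F$ and using the John property of $W_1 \cap W_2$ to convert these line integrals into a volume integral gives
\[
\int_{W_1 \cap W_2} |u|^2 \le 2H \int_F |u(x,0,z)|^2 \, \D x \, \D z + H^2 \int_{W_1 \cap W_2} \left|\frac{\partial u}{\partial y}\right|^2.
\]
Finally, Lemma~\ref{W1W2} bounds the first term by $2H \cdot C(\ve,\ve_{W_1},M,H)\, d(W_1,W_2)^{1/3}$ and Lemma~\ref{lem:nablaW1W2} bounds the second term by $H^2 \cdot C(\ve,\ve_{W_1},M,H)\, d(W_1,W_2)^{1/3}$, which together give the desired estimate.

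The only real step to be careful about is the John-condition observation and the resulting fact that $W_1 \cap W_2$ is a union of vertical segments based on $F$; once this is in place the Poincaré-type argument along vertical lines is routine. There is no substantial analytic obstacle, as the two hard bounds ($L^2$ on $F$ and $L^2$ of the gradient on the intersection) have already been proved in the preceding two lemmas.
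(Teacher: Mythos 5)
Your proof is correct, but it takes a genuinely different route from the paper. The paper reuses the oddness trick from Lemma~\ref{uniform}: the difference $\vp_{W_1} - \vp_{W_2}$ is odd in $x$, the domain $W_1 \cap W_2$ is symmetric about the $yz$-plane with interval cross-sections along lines parallel to the $x$-axis, so the one-dimensional Poincar\'e inequality for odd functions on a symmetric interval gives
\[
\int_{W_1 \cap W_2} |\vp_{W_1} - \vp_{W_2}|^2 \le \frac{4}{\pi^2} \int_{W_1 \cap W_2} |\nabla(\vp_{W_1} - \vp_{W_2})|^2,
\]
and Lemma~\ref{lem:nablaW1W2} finishes the proof in one line, without invoking Lemma~\ref{W1W2} at all. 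You instead integrate along vertical lines, using the John property of $\calW$ domains (which you correctly identify and correctly observe is stable under intersecting two domains with the same free surface) to anchor the fundamental theorem of calculus at the trace on $F$, and then need both Lemma~\ref{W1W2} (to control the trace term) and Lemma~\ref{lem:nablaW1W2} (to control the gradient term). This is sound; the only wrinkle is a harmless constant mismatch in your intermediate display (with the split $(a+b)^2 \le 2a^2 + 2b^2$ the coefficient on the gradient term comes out as $2H^2$, not $H^2$; alternatively a weighted Young split makes any pair of constants of the form $(1+1/\delta, 1+\delta)$ available). The paper's approach is more economical and exploits the symmetry structure that is already being tracked throughout; yours is more robust in the sense that it would work even if the difference of eigenfunctions were not odd, at the cost of an extra input lemma.
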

\begin{proof}
Note that $W_1 \cap W_2$ is symmetric with respect to the $yz$-plane, and its intersections with lines parallel to the $x$-axis are intervals. Since $\vp_{W_1}$, $\vp_{W_2}$ are odd with respect to the $x$-variable, as in the proof of Lemma \ref{uniform} we get
$$
\int_{W_1 \cap W_2} |\vp_{W_1} - \vp_{W_2}|^2 
\le \frac{4}{\pi^2} \int_{W_1 \cap W_2} |\nabla(\vp_{W_1} - \vp_{W_2})|^2.
$$
Now the assertion of the lemma follows from Lemma \ref{lem:nablaW1W2}.
\end{proof}

The next lemma is analogous to Lemma 2.6 in \cite{JN2000}.

\begin{lemma}
\label{familyWt}
Let $W_t \in \calW(\ve,M,H,1)$ for $t$ in a neighborhood of $0$, and suppose that $d(W_t, W_0) \to 0$ as $t \to 0$. Let $K$ be a compact subset of $W_0$. Then there exists $\delta > 0$ such that for all $|t| < \delta$ we have
\begin{align*}
 & K \subset W_t && \text{and} && \|(\vp_{W_t} - \vp_{W_0}) 1_K\|_{\infty} \to 0 \quad \text{as} \quad t \to 0.
\end{align*}
\end{lemma}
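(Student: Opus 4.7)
The plan is to combine the $L^2$ convergence already established in Lemma \ref{lemW1W2} with interior regularity for harmonic functions.

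First I would handle the inclusion $K \subset W_t$ for small $|t|$. Since $d(W_t, W_0) \to 0$ means that the defining Lipschitz functions $f_t$ on $S^2$ (from Corollary \ref{inclusion} and Definition \ref{distance}) converge uniformly to $f_0$, and since $K$ is a compact subset of the open set $W_0$, there exists $\delta_0 > 0$ such that for every $p \in K$,
\[
 |p - p_0| \le f_0\Bigl(\tfrac{p - p_0}{|p - p_0|}\Bigr) - \delta_0.
\]
Once $\|f_t - f_0\|_\infty < \delta_0/2$, the same inequality holds with $f_t$ in place of $f_0$ and $\delta_0/2$ in place of $\delta_0$, so $p \in W_t$. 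The same argument applied to a compact ``fattening'' $K' := \{p : \dist(p, K) \le \rho\}$ of $K$ (for some $\rho > 0$ small enough that $K' \subset W_0$) gives $K' \subset W_t$ for all sufficiently small $|t|$.

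Second, both $\vp_{W_0}$ and $\vp_{W_t}$ are harmonic on $W_0$ and $W_t$ respectively, hence on $W_0 \cap W_t$, so $u_t := \vp_{W_t} - \vp_{W_0}$ is harmonic on an open neighborhood of $K'$. For each $p \in K$ the ball $B(p, \rho)$ lies in $K' \subset W_0 \cap W_t$, so by the mean value property and the Cauchy--Schwarz inequality,
\[
 |u_t(p)| = \Bigl|\,\frac{1}{|B(p,\rho)|} \int_{B(p,\rho)} u_t\,\Bigr| \le C \rho^{-3/2} \|u_t\|_{L^2(B(p,\rho))} \le C \rho^{-3/2} \|u_t\|_{L^2(W_0 \cap W_t)}.
\]
Taking the supremum over $p \in K$ and applying Lemma \ref{lemW1W2} we obtain
\[
 \|(\vp_{W_t} - \vp_{W_0}) 1_K\|_{\infty} \le C \rho^{-3/2} \bigl( C(\ve,\ve_{W_0},M,H)\, d(W_t, W_0)^{1/3}\bigr)^{1/2},
\]
which tends to $0$ as $t \to 0$, establishing the lemma.

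The main (and only) obstacle is setting up the fattening $K'$ so that the harmonic extension estimate has a radius $\rho$ that does not depend on $t$; this is handled cleanly using uniform convergence of the Lipschitz parametrizations $f_t$. Everything else is a standard interior estimate plus the $L^2$ bound already in place.
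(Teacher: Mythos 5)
Your proposal is correct and follows essentially the same route as the paper: the paper also fixes $r>0$ with $\dist(K,\partial W_0)=2r$, uses $d(W_t,W_0)\to 0$ to get $K\subset W_t$ and $\dist(K,\partial W_t)>r$ for small $|t|$, and then bounds $|\vp_{W_t}(p)-\vp_{W_0}(p)|$ on $K$ by the mean value property over $B(p,r)\subset W_t\cap W_0$, Cauchy--Schwarz, and Lemma \ref{lemW1W2}, yielding a bound of order $d(W_t,W_0)^{1/6}$.
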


\begin{proof}
Since $K$ is compact we have $\dist(K,\partial{W_0}) = 2 r$ for some $r > 0$. Since $d(W_t,W_0) \to 0$ there exists $\delta > 0$ such that for all $|t| < \delta$ we have $K \subset W_t$ and $\dist(K,\partial{W_t}) > r$. 

Let $|t| < \delta$ and $p \in K$. Then $B(p,r) \subset W_t \cap W_0$ and $\vp_{W_t}$, $\vp_{W_0}$ are harmonic in $B(p,r)$. Using this and Lemma \ref{lemW1W2} we get
\begin{align*}
|\vp_{W_t}(p) - \vp_{W_0}(p)| 
&\le \frac{1}{|B(p,r)|} \int_{B(p,r)} |\vp_{W_t} - \vp_{W_0}| \\
&\le \frac{1}{\sqrt{|B(p,r)|}} \left( \int_{B(p,r)} |\vp_{W_t} - \vp_{W_0}|^2 \right)^{1/2} \\
&\le \frac{1}{\sqrt{|B(p,r)|}} \left( \int_{W_t \cap W_0} |\vp_{W_t} - \vp_{W_0}|^2 \right)^{1/2} \\
&\le C(\ve,\ve_{W_0},M,H) r^{-3/2} d(W_t,W_0)^{1/6},
\end{align*}
which implies the assertion of the lemma.
\end{proof}

\section{Monotonicity of the odd eigenfunction for some class of piecewise smooth domains}

\begin{figure}
\centering
\includegraphics[scale=0.8]{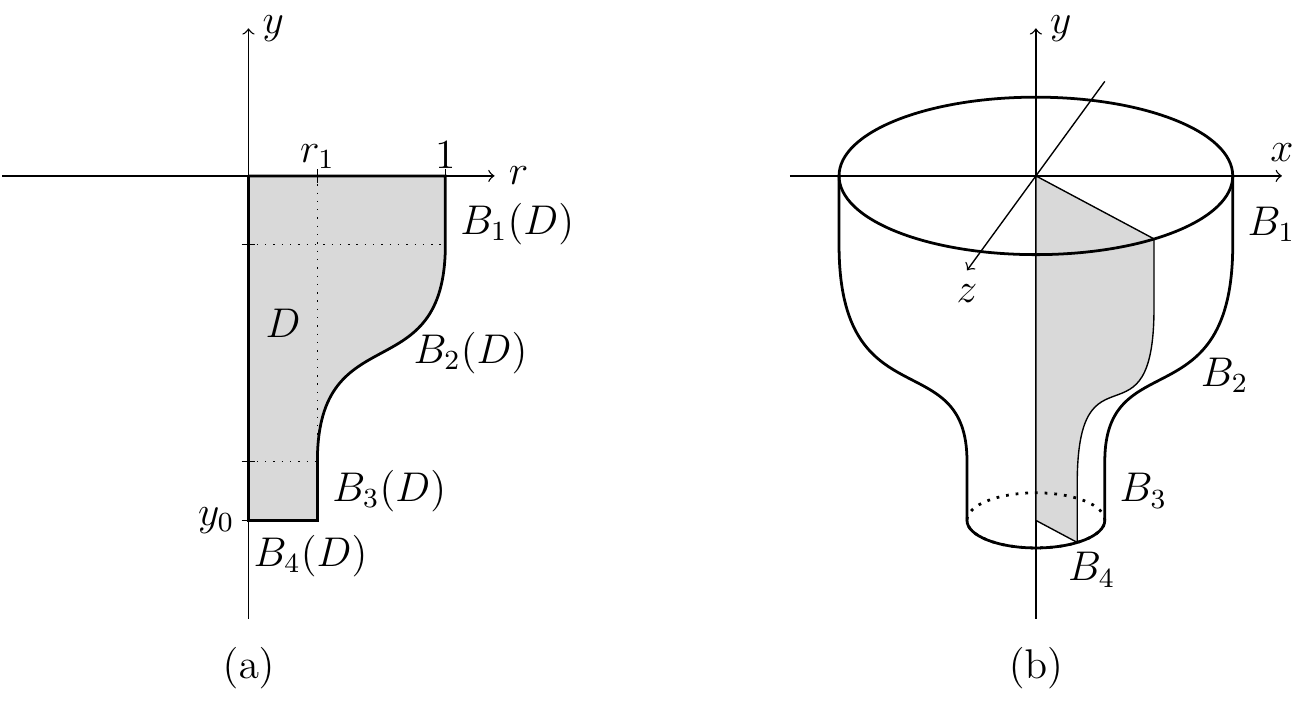}
\caption{(a) An example of a domain $D$ in the class $\calD_1$. (b) An example of a domain $W$ in the class $\calW_1$.}
\label{fig:4}
\end{figure}

This section is similar to Section 3 in \cite{JN2000}. The aim of this section is to show Theorem \ref{th:oddmon} for domains in some special subclass $\calW_1$ of the class $\calW$, defined below (see Figure~\ref{fig:4}(b)). First we need to define the subclass $\calD_1$ of the class $\calD$. In the whole section we use notation~\eqref{psi11}.

\begin{definition}
\label{classD1}
Let $y_0 < 0$, $r_1 \in (0,1)$, $T = r_1 - y_0$, $r_1 = T_1 < T_2 < T_3 < T$. The domain $D \subset \{(r,y): \, r > 0,\, y < 0\}$ belongs to the class of domains $\calD_1$ iff its boundary consists of the following 3 parts (see Figure~\ref{fig:4}(a)):
\begin{enumerate}
\item[(i)] the horizontal interval $F(D) = \{(r,y): \, r \in [0,1), \, y = 0\}$;
\item[(ii)] the vertical interval $R(D) = \{(r,y): \, r = 0, \, y \in (y_0,0]\}$;
\item[(iii)]  $B(D)$, parametrized by a simple continuous curve $(r(t), y(t))$, $t \in [0,T]$, satisfying the following conditions:
\begin{enumerate}
\item $(r(t), y(t)) = (t,y_0)$ for $t \in [0,T_1]$,
\item $(r(t), y(t)) = (r_1,y_0 + t - T_1)$ for $t \in [T_1,T_2]$,
\item $y(t) = y_0 + t - T_1$ for $t \in [T_2,T_3]$, $r(t)$ is a strictly increasing function on $[T_2,T_3]$,
\item $(r(t),y(t)) = (1,y_0 + t - T_1)$ for $t \in [T_3,T]$,
\item $r(t)$ is a $C^{\infty}$ function on $[T_1,T]$.
\end{enumerate}
\end{enumerate}
We denote by $B_1(D)$, $B_2(D)$, $B_3(D)$, $B_4(D)$, the parts of $B(D)$  corresponding to $t \in [T_3,T)$, $t \in (T_2,T_3)$, $t \in (T_1,T_2]$, $t \in (0,T_1]$  respectively.
\end{definition}

\begin{definition}
\label{def:classW1}
The domain $W$ belongs to the class of domains $\calW_1$ iff $W = W(D)$ for some $D \in \calD_1$. For $W = W(D) \in \calW_1$ we denote $B_i = \{(x,y,z) \in \R^3: \, (\sqrt{x^2 + z^2},y) \in B_i(D)\}$, $i = 1, 2, 3, 4$.
\end{definition}

Note that for $\eps = \min( r_1,T - T_3)$, $H = - y_0 + 1$, the domain $D \in \calD_1$ belongs to the class $\calD(\eps,1,H,1)$  and $W(D) \in \calW(\ve,1,H,1)$. 

First we need the following general lemma.
\begin{lemma}
\label{lem:newcoordinates}
Let $R > 0$, $( \hat{x},\hat{y})$ be a rectangular coordinate system in $\R^2$ and $B(0,R) = \{( \hat{x},\hat{y}): \,  \hat{x}^2 + \hat{y}^2 < R^2\}$. Let $f \in C^{2,1}(-R,R)$ be such that $f(0) = f'(0) = 0$ and $\Gamma(f) = \{(x,f(x)): x \in (-R,R)\} \cap B(0,R)$ be the  part of the graph of $f$ contained in $B(0,R)$. Assume that $g \in C^{2,1}(B(0,R))$ and $\nabla g(0,0) = 0$. 

If $\frac{\partial g}{\partial n}( \hat{x}, \hat{y}) = 0$ for all $( \hat{x}, \hat{y}) \in \Gamma(f)$, where $\frac{\partial}{\partial n}$ is the normal derivative to $\Gamma(f)$ at point $( \hat{x}, \hat{y}) \in \Gamma(f)$, then
\begin{equation}
\label{eq:mixder}
\frac{\partial^2 g}{\partial  \hat{x} \partial  \hat{y}}(0,0) = 0.
\end{equation}
\end{lemma}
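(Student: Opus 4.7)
The plan is to translate the pointwise Neumann condition on $\Gamma(f)$ into a scalar identity in one variable, and then differentiate it once at the origin.

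First I would write the Neumann condition explicitly. At a point $(\hat{x},\hat{y}) = (x,f(x))$ on $\Gamma(f)$, the tangent direction is $(1,f'(x))$, so an (unnormalized) normal is $(-f'(x),1)$. The hypothesis $\partial g/\partial n = 0$ on $\Gamma(f)$ is therefore equivalent to the identity
\begin{equation*}
g_{\hat{y}}(x,f(x)) \;=\; f'(x)\, g_{\hat{x}}(x,f(x)), \qquad x\in(-R,R)\cap\{|(x,f(x))|<R\}.
\end{equation*}
The regularity $f\in C^{2,1}$ and $g\in C^{2,1}$ makes both sides $C^{1,1}$ in $x$, so I can differentiate once.

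Differentiating both sides at $x=0$ and using the chain rule gives
\begin{equation*}
g_{\hat{x}\hat{y}}(0,0) + f'(0)\, g_{\hat{y}\hat{y}}(0,0) \;=\; f''(0)\, g_{\hat{x}}(0,0) + f'(0)\bigl[g_{\hat{x}\hat{x}}(0,0) + f'(0)\, g_{\hat{x}\hat{y}}(0,0)\bigr].
\end{equation*}
Now I would plug in the two hypotheses at the origin: $f'(0)=0$ kills the second term on the left and the entire bracketed term on the right, while $\nabla g(0,0)=0$ (in particular $g_{\hat{x}}(0,0)=0$) kills the $f''(0) g_{\hat{x}}(0,0)$ term. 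What survives is precisely $g_{\hat{x}\hat{y}}(0,0)=0$, which is \eqref{eq:mixder}.

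There is essentially no obstacle: the argument only uses one differentiation of the boundary condition, and the cancellations are forced by the two vanishing hypotheses $f'(0)=0$ and $\nabla g(0,0)=0$. The only thing to check is that $(0,0)\in\Gamma(f)$ (which holds because $f(0)=0$) and that the restriction $x\mapsto g_{\hat{y}}(x,f(x))-f'(x)g_{\hat{x}}(x,f(x))$ is defined on an open interval containing $0$, so that differentiation at $0$ is legitimate; both follow from $f(0)=0$ and the openness of $B(0,R)$. The $C^{2,1}$ hypothesis is used only to ensure $g_{\hat{x}\hat{y}}$, $g_{\hat{y}\hat{y}}$, and $f''$ exist at $0$ and that the composition $g_{\hat{y}}(x,f(x))$ is differentiable in $x$ with the usual chain rule.
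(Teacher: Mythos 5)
Your proof is correct and rests on the same underlying idea as the paper's: both start from the identity $g_{\hat y}(x,f(x))=f'(x)\,g_{\hat x}(x,f(x))$ along $\Gamma(f)$ and extract the mixed second derivative at the origin using $f'(0)=0$ and $\nabla g(0,0)=0$. The only cosmetic difference is that you differentiate the identity once at $x=0$ via the chain and product rules, whereas the paper reaches the same cancellation through big-$O$ estimates and a first-order Taylor expansion of $g_{\hat y}$; your version is, if anything, slightly cleaner, and the $C^{2,1}$ hypotheses amply justify the single differentiation you perform.
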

\begin{proof}
By assumptions on $f$ there exists $r > 0$ such that if $ \hat{x} \in (-r,r)$ then $( \hat{x},f( \hat{x})) \in B(0,R)$ so $( \hat{x},f( \hat{x})) \in \Gamma(f)$. In the whole proof we will assume that $ \hat{x} \in (-r,r)$. 

The unit (upper) normal derivative to $\Gamma(f)$ at $( \hat{x},f( \hat{x})) \in \Gamma(f)$ is equal to 
$$
\frac{\partial}{\partial n} = \frac{1}{\sqrt{1 + (f'( \hat{x}))^2}} \left(- f'( \hat{x}) \frac{\partial}{\partial  \hat{x}} + \frac{\partial}{\partial  \hat{y}}\right).
$$
The condition $\frac{\partial g}{\partial n}( \hat{x}, \hat{y}) = 0$ gives 
\begin{equation}
\label{eq:yder}
\frac{\partial g}{\partial  \hat{y}}( \hat{x},f( \hat{x})) = f'( \hat{x}) \frac{\partial g}{\partial  \hat{x}}( \hat{x},f( \hat{x})).
\end{equation}
By our assumptions on $f$ we get $f( \hat{x}) = O( \hat{x}^2)$ and $f'( \hat{x}) = O(| \hat{x}|)$. Similarly, assumptions on $g$ give $\frac{\partial g}{\partial  \hat{x}}( \hat{x}, \hat{y}) = O(\sqrt{ \hat{x}^2 +  \hat{y}^2})$ so $\frac{\partial g}{\partial  \hat{x}}( \hat{x},f( \hat{x})) = O(| \hat{x}|)$. By (\ref{eq:yder}) we get
\begin{equation}
\label{eq:yder0}
\frac{\partial g}{\partial  \hat{y}}( \hat{x},f( \hat{x})) = O( \hat{x}^2).
\end{equation}
By Taylor expansion for $\frac{\partial g}{\partial  \hat{y}}$ we get for $( \hat{x}, \hat{y}) \in B(0,R)$
$$
\frac{\partial g}{\partial  \hat{y}}( \hat{x}, \hat{y}) 
= \frac{\partial g}{\partial  \hat{y}}(0,0) 
+ \frac{\partial^2 g}{\partial  \hat{x} \partial  \hat{y}}(0,0)  \hat{x}
+ \frac{\partial^2 g}{\partial  \hat{y}^2}(0,0)  \hat{y}
+ o(\sqrt{ \hat{x}^2 +  \hat{y}^2}).
$$
It follows that 
$$
\frac{\partial g}{\partial  \hat{y}}( \hat{x},f( \hat{x})) 
= \frac{\partial^2 g}{\partial  \hat{x} \partial  \hat{y}}(0,0)  \hat{x}
+ \frac{\partial^2 g}{\partial  \hat{y}^2}(0,0) f( \hat{x})
+ o(| \hat{x}|).
$$
By (\ref{eq:yder0}) and $f( \hat{x}) = O( \hat{x}^2)$ we get
$$
\frac{\partial^2 g}{\partial  \hat{x} \partial  \hat{y}}(0,0)  \hat{x} = o(| \hat{x}|). 
$$
This implies (\ref{eq:mixder}).
\end{proof}

\begin{lemma}
\label{lem:smooth}
Let $D \in \calD_1$ and $W = W(D) \in \calW_1$. We have $\vp \in C^{2,1}(\overline{W})$.
\end{lemma}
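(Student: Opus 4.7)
The plan is to show $\vp\in C^\infty$ (hence $C^{2,1}$) locally on all of $\overline W$ and then glue by a partition of unity. The only non-smooth features of $\partial W$ are the two circular dihedral edges $E_1=\{r=1,\,y=0\}$ (where $F$ meets $B_1$) and $E_2=\{r=r_1,\,y=y_0\}$ (where $B_3$ meets $B_4$); both have interior angle $\pi/2$. The remainder of $\partial W$ is $C^\infty$: the pieces $F,B_1,B_2,B_3,B_4$ are each smooth, and the junctions $B_1$--$B_2$, $B_2$--$B_3$ are smooth because $r(t)\in C^\infty([T_1,T])$ by hypothesis.

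Interior and smooth-boundary regularity are standard. In $W$, $\vp$ is harmonic and hence real analytic. At any point of a smooth piece of $\partial W$ away from $E_1\cup E_2$, after local flattening $\vp$ satisfies Laplace's equation with either a homogeneous Neumann condition (on the $B_i$'s) or the Robin-type Steklov condition $\partial_y\vp=\nu\vp$ (on $F$); both cases are handled by classical Schauder-type elliptic regularity, giving $\vp\in C^\infty$ up to the boundary. In particular $\vp$ is smooth on the axis $\{r=0\}\cap W$, which lies in the interior of $W$ except for two endpoints that are themselves interior points of $F$ and $B_4$.

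At the Neumann--Neumann edge $E_2$ I reflect $\vp$ across the plane $\{y=y_0\}$: the function $\tilde\vp(x,y,z)=\vp(x,2y_0-y,z)$ agrees with $\vp$ on $B_4$, and the homogeneous Neumann condition makes the combined function $C^1$ and harmonic in a full neighborhood of any $p\in E_2$ intersected with $\{r\ge r_1\}$. In this enlarged configuration the only remaining boundary in a small neighborhood is the $C^\infty$ cylinder $\{r=r_1\}$ with homogeneous Neumann data inherited from $B_3$ and its reflection, so standard Neumann elliptic regularity gives $\vp\in C^\infty$ up to $E_2$. At the mixed Steklov--Neumann edge $E_1$ direct reflection fails because the Steklov condition is inhomogeneous. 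I would instead use the ansatz $\vp=\psi(r,y)\cos\theta$: since $r\ge 1/2$ near $(r,y)=(1,0)$, $\psi$ solves the uniformly elliptic equation~(\ref{lapD}) with $m=1$ and $C^\infty$ coefficients on a planar right-angle sector, with Neumann condition on $\{r=1\}$ and Robin condition on $\{y=0\}$. For such a right-angle Neumann--Robin problem, Kondratiev--Grisvard corner theory gives that the admissible singular exponents are integer multiples of $\pi/(\pi/2)=2$, so no fractional-exponent singularity appears and $\psi\in C^\infty$ up to $(1,0)$; multiplying by $\cos\theta$, which is smooth away from the axis, then gives $\vp\in C^\infty$ up to $E_1$.

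The main obstacle is the Steklov--Neumann corner $E_1$: the inhomogeneous Steklov condition blocks any elementary rigid-motion reflection, so one must invoke corner-regularity theory (or, equivalently, explicit separation of variables in the angular coordinate of the corner sector combined with the $H^{3/2}$ information of Lemma~\ref{H32}) to upgrade from the variational regularity to full $C^{2,1}$. The right-angle geometry is essential: at angle $\pi/2$ every Kondratiev exponent for Neumann--Robin data is an even integer, so no genuine corner singularity is produced, which is precisely why the lemma is available for every $W\in\calW_1$.
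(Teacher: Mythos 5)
The paper does not actually prove this lemma: its ``proof'' is a citation of the known asymptotic expansion of sloshing eigenfunctions at a right-angle contact line, namely \cite[formula~(13.14), p.~63]{LBK1984} (see also \cite{S2008}). You attempt an independent proof, and most of it is sound: the identification of the only non-smooth features as the two circular edges, the interior/smooth-boundary Schauder step, and the even reflection across $\{y=y_0\}$ at the Neumann--Neumann edge $E_2$ are all correct (modulo the slip that near $E_2$ the fluid lies in $\{r\le r_1\}$, not $\{r\ge r_1\}$). The genuine gap is at the Steklov--Neumann edge $E_1$, which is exactly the step the paper outsources to \cite{LBK1984}. Your argument there is ``the Kondratiev exponents are the integers $2k$, hence no singularity and $\psi\in C^\infty$ up to $(1,0)$''. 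This inference is not valid: when the opening angle $\omega$ satisfies $\pi/\omega\in\N$ one is in the resonant case, and the Kondratiev/Grisvard expansion generically contains logarithmic terms $\rho^{k}\log\rho$ produced by the lower-order perturbations --- here the Robin term $\nu\psi$ on $F$ and the variable coefficients $\tfrac1r\partial_r-\tfrac1{r^2}$ of \eqref{lapD}. A term $c\,\rho^{2}\log\rho$ would already destroy $C^{2}$, and $c\,\rho^{3}\log\rho$ would destroy $C^{2,1}$, so integer exponents alone prove nothing; one must actually compute (or quote) the first terms of the corner expansion and check that no logarithms occur through order $3$ --- which is precisely what formula (13.14) of \cite{LBK1984} supplies. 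Moreover your conclusion $C^\infty$ at the contact circle is stronger than the $C^{2,1}$ asserted in the lemma and is not what corner theory gives in general.

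Two further cautions if you want to close the gap yourself. First, the ``even reflection in the wall'' shortcut that works for the genuinely two-dimensional sloshing corner does not apply verbatim: after straightening the wall by $s=1-r$ and reflecting evenly in $\{s=0\}$, the reflected coefficients of \eqref{lapD} are only Lipschitz across $s=0$, and Schauder theory with $C^{0,\alpha}$ coefficients yields $C^{2,\alpha}$ for every $\alpha<1$ but not the endpoint $C^{2,1}$. Second, a cleaner route in the spirit of your reflections is to work with the auxiliary harmonic function $v=\partial_y\vp-\nu\vp$, which vanishes on $F$ and has vanishing normal derivative on the vertical wall, so that an odd reflection across $\{y=0\}$ reduces matters to a smooth Neumann problem on the cylinder near $E_1$; but then one still has to transfer the regularity of $v$ back to $\vp$ (and to justify $v\in H^1$ near the edge before reflecting), and this bookkeeping is exactly the content of the cited expansion. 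As written, the key step of your proposal is asserted rather than proved.
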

\begin{proof}
This is a standard result. The lemma follows from \cite[formula~(13.14)]{LBK1984} and \cite[p.~63, lines~1--3]{LBK1984}, see also~\cite{S2008}.
\end{proof}

As an immediate conlusion of this lemma we obtain:

\begin{corollary}
Let $D \in \calD_1$ and $W = W(D) \in \calW_1$. Then $\frac{\partial \vp}{\partial x}$, $\frac{\partial \vp}{\partial y}$ are bounded on $\overline{W}$. 
\end{corollary}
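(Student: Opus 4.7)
The corollary is an immediate consequence of Lemma \ref{lem:smooth}, so the ``plan'' is essentially a one-line observation rather than a real argument. My plan is as follows.

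First I would note that membership in $C^{2,1}(\overline{W})$ provided by Lemma \ref{lem:smooth} in particular implies membership in $C^1(\overline{W})$, so the partial derivatives $\frac{\partial \vp}{\partial x}$ and $\frac{\partial \vp}{\partial y}$ extend continuously to the closure $\overline{W}$. Next I would observe that $W = W(D)$ with $D \in \calD_1$ is bounded: by Definition \ref{classD1}, $D$ is contained in the rectangle $[0,1] \times [y_0,0]$, so $W$ is contained in the cylinder of radius $1$ and height $|y_0|$, and hence $\overline{W}$ is compact.

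Finally, the corollary follows because any continuous function on a compact set is bounded: applying this to the two continuous functions $\frac{\partial \vp}{\partial x}$ and $\frac{\partial \vp}{\partial y}$ on $\overline{W}$ yields the stated boundedness.

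Since every ingredient is either quoted directly from Lemma \ref{lem:smooth} or is a standard topological fact, there is no real obstacle here; the only thing worth checking is that $\overline{W}$ is genuinely compact, which is transparent from the explicit parametrization in Definition \ref{classD1}. The corollary is essentially a convenient restatement of the regularity already proved, isolated for later use in Section 4.
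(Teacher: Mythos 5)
Your argument is correct and matches the paper exactly: the paper presents this corollary as an immediate consequence of Lemma \ref{lem:smooth}, which is precisely your reasoning ($C^{2,1}(\overline{W}) \subset C^1(\overline{W})$, $\overline{W}$ compact since $W$ is bounded, continuous functions on a compact set are bounded). Nothing further is needed.
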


Recall our notation $W_+ = \{(x,y,z) \in W: \, x > 0\}$. Recall also that we may assume that $\psi > 0$ on $D$.

\begin{lemma}
\label{lem:global}
Let $W  = W(D)$ be an axisymmetric liquid domain. If $\frac{\partial \psi}{\partial r} \ge 0$ on $D$ and $\frac{\partial \psi}{\partial y} \ge 0$ on $D$ then $\frac{\partial \vp}{\partial y} \ge 0$ on $W_+$ and $\frac{\partial \vp}{\partial x} \ge 0$ on $W$.
\end{lemma}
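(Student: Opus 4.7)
The plan is to write $\vp$ explicitly in cylindrical coordinates and then convert the two derivatives $\partial_y \vp$ and $\partial_x \vp$ back to the Cartesian frame by a direct chain-rule computation. Since $\vp(r,\theta,y) = \psi(r,y)\cos\theta$, this is purely algebraic and requires no PDE machinery — the key observation is that the resulting expressions are visibly sums of non-negative terms under the hypotheses $\partial_r\psi \ge 0$, $\partial_y\psi \ge 0$, and $\psi > 0$ on $D$.

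First I would handle $\partial_y \vp$. In cylindrical coordinates $\partial_y \vp = (\partial_y \psi)(r,y)\cos\theta$. Since the points of $W_+$ are precisely those with $x > 0$, equivalently $r\cos\theta > 0$, on $W_+$ we have $\cos\theta > 0$. Combined with $\partial_y \psi \ge 0$ on $D$, this gives $\partial_y \vp \ge 0$ on $W_+$ as claimed (writing the argument almost everywhere and invoking continuity to handle the axis).

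Next I would treat $\partial_x \vp$. Using $r=\sqrt{x^2+z^2}$ and $\theta=\arctan(z/x)$, so that $\partial_x r = \cos\theta$ and $\partial_x \theta = -\sin\theta/r$, the chain rule yields
\begin{equation*}
\frac{\partial \vp}{\partial x} = \frac{\partial \psi}{\partial r}(r,y)\cos^2\theta + \frac{\psi(r,y)}{r}\sin^2\theta.
\end{equation*}
Both summands are non-negative on $W$: the first because $\partial_r \psi \ge 0$ on $D$, and the second because $\psi > 0$ on $D$ and $r > 0$ away from the axis. At points with $r=0$ the expression is to be understood via continuity of $\vp$ (which equals $0$ on the axis, so $\partial_x\vp$ there is controlled by the limit), and smoothness of $\vp$ established in Lemma \ref{lem:smooth} makes this rigorous.

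No serious obstacle is expected; this is a coordinate computation and the only subtlety is taking care of the points on the axis $r = 0$, where $\theta$ is undefined. There the Cartesian smoothness of $\vp$ guaranteed by the earlier regularity results (Lemma \ref{lem:smooth} in class $\calW_1$, or by standard interior regularity in general) provides the required continuous extension of the non-negativity to all of $W$ and $W_+$.
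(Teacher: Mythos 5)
Your proposal is correct and follows essentially the same route as the paper: the identity $\frac{\partial \vp}{\partial y} = \frac{\partial \psi}{\partial y}\cos\theta$ on $W_+$ and the chain-rule formula $\frac{\partial \vp}{\partial x} = \frac{\partial \psi}{\partial r}\cos^2\theta + \frac{\psi}{r}\sin^2\theta$ are exactly the computations in the paper's proof. Your explicit remark about extending the inequality to the axis $r=0$ by continuity (the paper only handles this implicitly, in a comment after the lemma) is a harmless and even welcome addition.
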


\begin{proof}
In the cylindrical coordinates $(r,\theta,y)$ (see \eqref{cylindrical}) we have $W_+ = \{ \theta \in (-\pi/2, \pi/2) \}$. Clearly, 
$$
\frac{\partial \vp}{\partial y} =
\frac{\partial \psi}{\partial y} (r,y) \cos \theta.
$$
Hence $\frac{\partial \psi}{\partial y} \ge 0$ on $D$ implies $\frac{\partial \vp}{\partial y} \ge 0$ on $W_+$.  Furthermore,
$$
\frac{\partial \vp}{\partial x} 
= \frac{\partial \vp}{\partial r} \cos \theta
+ \frac{\partial \vp}{\partial \theta} \left(\frac{-\sin \theta}{r} \right).
$$
Since $\vp = \psi(r,y) \cos \theta$, we obtain 
\begin{equation}
\label{eq:psixphix}
\frac{\partial \vp}{\partial x}
= \frac{\partial \psi}{\partial r}(r,y) \cos^2 \theta
+ \psi(r,y) \left(\frac{\sin^2 \theta}{r} \right).
\end{equation}
It follows that  $\psi \ge 0$ and $\frac{\partial \psi}{\partial r} \ge 0$ on $D$ implies $\frac{\partial \vp}{\partial x} \ge 0$ on $W$. 
\end{proof} 

Note that if $D \in \calD_1$ and $W = W(D) \in \calW_1$ then, in view of Lemma \ref{lem:smooth}, (\ref{eq:psixphix}) holds  on $\overline{W}$, given that $r = \sqrt{x^2 + z^2} \ne 0$.

\begin{lemma}
\label{lem:phipsi}
Let $D \in \calD_1$ and $W = W(D) \in \calW_1$. Assume that $(r,y) \in D$. For  $x = r$ and any $n,m \in \N$ we have
$$
\frac{\partial^{n + m} \vp}{\partial x^n \partial y^m}(x,y,0) 
= \frac{\partial^{n + m} \psi}{\partial r^n \partial y^m}(r,y). 
$$
\end{lemma}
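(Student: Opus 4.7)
The plan is to reduce the identity to the simple observation that, on the half-plane $\{z = 0,\, x > 0\}$, the Cartesian function $\vp$ agrees with the cylindrical profile $\psi$. Indeed, with $r = \sqrt{x^2 + z^2}$ and $\theta = \Arg(x + iz)$, we have $\vp(x,y,z) = \psi(r,y)\cos\theta$, and at any point of the form $(x,y,0)$ with $x > 0$ one has $r = x$, $\theta = 0$, hence
\[
 \vp(x, y, 0) = \psi(x, y) \qquad \text{for } x > 0.
\]

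First I would justify that the derivatives on the left-hand side make sense to arbitrary order. Since $(r,y) \in D$ means $r > 0$ and $y < 0$, the point $(r, y, 0)$ is an interior point of $W$; as $\vp$ is harmonic in $W$, it is real-analytic there, and in particular $\vp$ is $C^\infty$ in a neighborhood of $(r, y, 0)$. Hence all partial derivatives $\partial_x^n \partial_y^m \vp$ exist and are continuous near this point.

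Next I would argue by induction on $n+m$ that
\[
 \frac{\partial^{n+m}\vp}{\partial x^n \partial y^m}(x,y,0) \;=\; \frac{\partial^{n+m}\psi}{\partial r^n \partial y^m}(r,y), \qquad r = x > 0.
\]
The base case $n = m = 0$ is exactly the identity $\vp(x,y,0) = \psi(x,y)$ displayed above. For the inductive step, note that any pure $(x,y)$-partial derivative of $\vp$ at a point $(x, y, 0)$ depends only on the values of $\vp$ on the plane $\{z = 0\}$ in a neighborhood: concretely, $\partial_x$ and $\partial_y$ at $(x,y,0)$ are the usual partial derivatives, in $x$ and $y$ respectively, of the restriction $(x,y) \mapsto \vp(x,y,0)$. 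Applying the induction hypothesis to the restriction and then differentiating once more in $x$ or $y$ yields the claim, since differentiation of $\psi(x,y)$ in $x$ is exactly differentiation of $\psi(r,y)$ in $r$ evaluated at $r = x$.

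There is no real obstacle here; the only thing to be careful about is ensuring smoothness and the interiority of the point $(r,y,0)$ in $W$, both of which follow immediately from $D \subset \{r > 0,\, y < 0\}$ and the harmonicity of $\vp$ in the interior of $W$. (Lemma \ref{lem:smooth} is more than enough for the low-order case and is not needed for the interior statement, where analyticity is automatic.)
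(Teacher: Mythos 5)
Your proof is correct and follows essentially the same route as the paper, which simply observes that $\vp(x,y,0)=\psi(r,y)$ for $x=r$ and differentiates this identity; your version merely spells out the interior smoothness of $\vp$ and the fact that $x$- and $y$-derivatives are tangential to the plane $\{z=0\}$, so they commute with restriction.
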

\begin{proof}
This follows from the fact that for $x = r$ we have $\vp(x,y,0) = \psi(r,y)$.
\end{proof}

The proof of the following lemma uses some ideas from the proof of Proposition 3.2 in \cite{JN2000}.
\begin{lemma}
\label{lem:yderivative}
Let $D \in \calD_1$ and $W = W(D) \in \calW_1$. If $\frac{\partial \psi}{\partial r} \ge 0$ on $D$ and $\frac{\partial \psi}{\partial y} \ge 0$ on $D$ then $\frac{\partial \psi}{\partial y} > 0$ on $B_1(D) \cup B_2(D) \cup B_3(D)$ (see Figure~\ref{fig:4}(a)).
\end{lemma}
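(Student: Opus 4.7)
The plan is to apply the strong maximum principle and Hopf's boundary point lemma to $u := \pd[\psi]{y}$ and $w := \pd[\psi]{r}$, both non-negative on $D$ by hypothesis. Differentiating the equation in~\eqref{lapD} (with $m = 1$) with respect to $y$ gives an identical elliptic equation for $u$, while differentiating with respect to $r$ gives
\[
 \pdd[w]{r} + \frac{1}{r}\pd[w]{r} + \pdd[w]{y} - \frac{2w}{r^2} = -\frac{2\psi}{r^3} \le 0 .
\]
Both associated operators have non-positive zero-order coefficients, and neither $u$ nor $w$ is identically zero in $D$ (for instance, $u = \nu\psi$ on $F(D)$; and $w \equiv 0$ would force $\psi$ to depend only on $y$, contradicting the PDE). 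The strong maximum principle then gives $u > 0$ and $w > 0$ in the interior of $D$.

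For the vertical portions $B_1(D)$ and $B_3(D)$, where $r \equiv 1$ or $r \equiv r_1$, the $y$-direction is tangent to the boundary. Differentiating the Neumann condition $\pd[\psi]{r} = 0$ in $y$ gives $\pd[u]{r} \equiv 0$ on $B_1 \cup B_3$. Both segments are smooth, so the interior sphere condition holds; if $u(p_*) = 0$ at an interior point, Hopf's lemma for $u$ would force the outer normal derivative $\pd[u]{n}(p_*)$ to be strictly nonzero, contradicting $\pd[u]{r} \equiv 0$ there. Hence $u > 0$ on $B_1(D) \cup B_3(D)$.

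The main obstacle is the slanted part $B_2(D)$: there the $y$-direction is not tangent to the boundary, so the $y$-derivative of the Neumann condition does not produce a Neumann condition for $u$ and Hopf applied to $u$ alone is not contradicted by anything. The idea is to invoke the second non-negativity hypothesis by applying Hopf's lemma also to $w$. Suppose for contradiction that $u(p_*) = 0$ at $p_* = (r(t_*), y(t_*)) \in B_2(D)$. Since $y'(t) = 1$ on $B_2$, the Neumann condition reads $\pd[\psi]{r} = r'(t)\pd[\psi]{y}$, forcing $w(p_*) = 0$ as well, and hence $\nabla\psi(p_*) = 0$. With outer normal $(1, -r'(t_*))/\sqrt{1 + r'(t_*)^2}$ at $p_*$, Hopf's lemma applied in turn to $u$ and to $w$ yields the two strict inequalities
\[
 \pdpd[\psi]{r}{y}(p_*) - r'(t_*)\pdd[\psi]{y}(p_*) < 0 \quad \text{and} \quad \pdd[\psi]{r}(p_*) - r'(t_*)\pdpd[\psi]{r}{y}(p_*) < 0 .
\]

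The final step is to combine these with two identities at $p_*$: the equation in~\eqref{lapD}, using $\pd[\psi]{r}(p_*) = 0$, gives $\pdd[\psi]{r} + \pdd[\psi]{y} = \psi/r^2$; and differentiating the Neumann condition along the tangent $(r',1)$ to $B_2$, with the $r''\pd[\psi]{y}$ term dropping out since $\pd[\psi]{y}(p_*) = 0$, gives $r'(t_*)(\pdd[\psi]{r} - \pdd[\psi]{y}) = (r'(t_*)^2 - 1)\pdpd[\psi]{r}{y}$. Solving this $2 \times 2$ linear system for the pure second derivatives in terms of $\pdpd[\psi]{r}{y}$ and substituting into the two Hopf inequalities will, after a short computation, collapse them to two opposite strict bounds on $\pdpd[\psi]{r}{y}(p_*)$ around the single value $r'(t_*)\psi(p_*)/(r(t_*)^2(1 + r'(t_*)^2))$, producing a contradiction. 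The degenerate case $r'(t_*) = 0$ is even simpler: the differentiated Neumann condition reduces to $\pdpd[\psi]{r}{y}(p_*) = 0$, which directly contradicts the strict Hopf inequality for $u$.
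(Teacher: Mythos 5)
Your proof is correct, and it takes a genuinely different route from the paper's. The paper lifts to three dimensions: after noting $\nabla\psi(p_*)=0$, it invokes Lemma~\ref{lem:newcoordinates} (a Taylor-expansion argument in rotated tangent/normal coordinates) to get the vanishing of the tangential--normal mixed second derivative, then sets $u=\alpha\,\partial\varphi/\partial x+\beta\,\partial\varphi/\partial y$ with $(\alpha,\beta)$ the unit tangent, observes that $u$ is \emph{harmonic} in $W$, nonnegative on $W_+$ by Lemma~\ref{lem:global}, and vanishes at $p_*$ together with its normal derivative, contradicting Hopf applied once. Your argument stays entirely in the two-dimensional domain $D$: you apply the strong minimum principle and Hopf's lemma to the two derived equations for $\partial\psi/\partial y$ (which satisfies the same equation as $\psi$) and $\partial\psi/\partial r$ (which is a supersolution since $\psi\ge 0$), getting two strict Hopf inequalities at $p_*\in B_2(D)$; you then close the argument by solving the $2\times 2$ system consisting of the PDE at $p_*$ (using $\psi_r(p_*)=0$) and the tangential derivative of the Neumann condition, and substituting into the two Hopf bounds, which pinch $\partial^2\psi/\partial r\partial y(p_*)$ from both sides at the same value $r'(t_*)\psi(p_*)/\bigl(r(t_*)^2(1+r'(t_*)^2)\bigr)$. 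I checked the algebra and it does collapse as claimed, including the degenerate case $r'(t_*)=0$, where the tangential-differentiated Neumann condition already forces $\partial^2\psi/\partial r\partial y(p_*)=0$ against the strict Hopf inequality. The tradeoff: the paper's lift to $\R^3$ gives a harmonic function, so Hopf is applied once and cleanly, with no singular coefficients or case splitting; your version is more computational (two applications of Hopf plus a linear solve and a separate treatment of $r'=0$), but it is self-contained in two dimensions, avoids the 3D lifting lemmas \ref{lem:newcoordinates}, \ref{lem:global} and \ref{lem:phipsi}, and makes transparent exactly which boundary and PDE identities are being combined. One small point to be explicit about when writing this up: the strong minimum principle and Hopf's lemma are applied to operators whose coefficients involve $1/r$ and $1/r^2$; this is harmless because $r\ge r_1>0$ on a neighbourhood of $B_1\cup B_2\cup B_3$, and for the interior minimum principle in $D$ one can work on $\{r>\epsilon\}$ and propagate, but it is worth stating.
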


\begin{proof}
Since $\frac{\partial \psi}{\partial y} \ge 0$ on $D$ we have $\frac{\partial \psi}{\partial y} \ge 0$ on $B_1(D) \cup B_2(D) \cup B_3(D)$ so we only need to exclude the possibility that $\frac{\partial \psi}{\partial y} = 0$ at some point of $B_1(D) \cup B_2(D) \cup B_3(D)$. 

\begin{figure}
\centering
\includegraphics[scale=0.8]{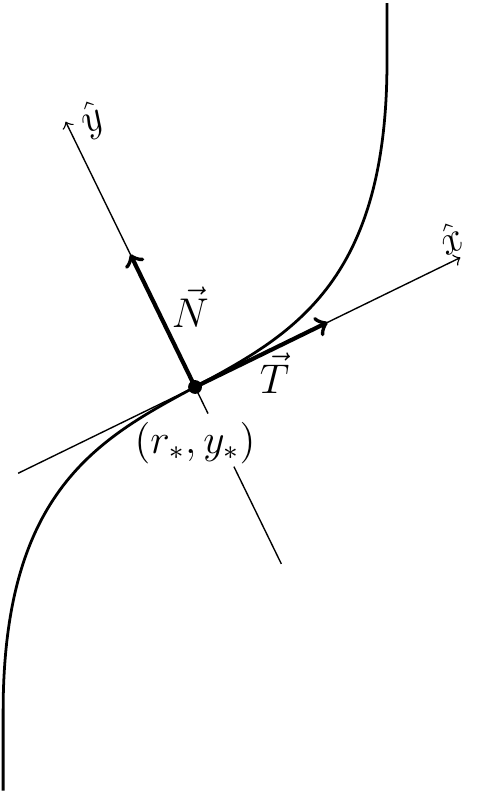}
\caption{New coordinate system $( \hat{x}, \hat{y})$.}
\label{fig:5}
\end{figure}

On the contrary assume that $\frac{\partial \psi}{\partial y}(r_*,y_*) = 0$ at some point $(r_*,y_*) \in B_1(D) \cup B_2(D) \cup B_3(D)$.  Since $\frac{\partial \psi}{\partial  n}(r_*,y_*) = 0$, $\frac{\partial \psi}{\partial y}(r_*,y_*) = 0$ and since  the normal vector is not parallel to  the $y$-axis, we have $\nabla \psi(r_*,y_*) = 0$. Let $\vec{T} = (\alpha,\beta) \in \R^2$, $\alpha \ge 0$, $\beta > 0$, $\alpha^2 + \beta^2 = 1$ be the tangent unit vector to $B(D)$ at point $(r_*,y_*)$ and $\vec{N} = (-\beta,\alpha) \in \R^2$ be the normal unit vector to  $B(D)$ at point $(r_*,y_*)$ (see Figure~5). 
Let us introduce a rectangular coordinate system $(\hat{x},\hat{y})$ with origin at $(r_*,y_*)$ so that $\hat{x}$-axis is in the direction $\vec{T}$  and $\hat{y}$-axis is in the direction $\vec{N}$.
Note that in these new coordinates $B(D)$ in some ball centered in the origin is the graph of  a $C^{2,1}$ (even $C^\infty$) function $f$ such that $f(0) = f'(0) = 0$.

By Lemma \ref{lem:smooth} $\psi \in C^{2,1}(\overline{D})$. Using this by \cite[Lemma 6.37]{GT1977} there is a  $C^{2,1}$ extension of the function $ \psi$ to some ball centered at $(r_*,y_*)$. We will denote this extension by the same letter $\psi$.
 
Then the assumptions of Lemma \ref{lem:newcoordinates} are satisfied which gives
\begin{equation}
\label{eq:mixder1}
 \frac{\partial^2  \psi}{\partial  \hat{x} \partial  \hat{y}}( r_*, y_*) = 0.
\end{equation}
Since $\nabla \psi(r_*,y_*) = 0$ we get $\frac{\partial \psi}{\partial  \hat{x}}(r_*,y_*) = 0$.

Note that $\pd{\hat{x}} = \alpha \pd{r} + \beta \pd{y}$ and define $u = \alpha \pd[\vp]{x} + \beta \pd[\vp]{y}$. By Lemma \ref{lem:phipsi}, $u(x, y, 0) = \pd[\psi]{\hat{x}}(r, y)$ (with $r = x$). Note that $u$ is harmonic in $W$ and by Lemma \ref{lem:global} and the fact that $\alpha \ge 0$, $\beta > 0$ we have $u \ge 0$ in $W_+$.  Finally, for $x_* = r_*$,
$$
 u(x_*, y_*, 0)
= \frac{\partial \psi}{\partial  \hat{x}}(r_*,y_*) = 0
$$
and
\begin{equation}
\label{eq:normal}
 \pd[u]{n}(x_*, y_*, 0)
= \frac{\partial^2 \psi}{\partial  \hat{x} \partial  \hat{y} }(r_*,y_*) = 0.
\end{equation}
At the point $(x_*,y_*,0) \in \partial W_+$ the domain $W_+$ satisfies the inner ball condition. Hence (\ref{eq:normal}) gives contradiction with the Hopf's lemma for the harmonic function $u$.
\end{proof}

\begin{lemma}
\label{lem:normal}
Let $u$ be a function which is harmonic in a bounded domain $\Omega \subset\textbf{} \R^n$ and continuous in $\overline{\Omega}$. Let $Q_1 \subset \partial \Omega$ be such that for each $p \in Q_1$ the boundary $\partial \Omega $ has a tangent plane at $p$,  the outer normal derivative $\frac{\partial u}{\partial n}(p)$ exists and $\frac{\partial u}{\partial n}(p) = 0$. Let $Q_2 = \partial \Omega \setminus Q_1$. If $ u \ge 0$ on $Q_2$ then $u \ge 0$ on $\overline{\Omega}$.
\end{lemma}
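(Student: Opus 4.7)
The natural plan is a contradiction argument combining the strong minimum principle for harmonic functions with Hopf's boundary-point lemma. Assume $u < 0$ somewhere in $\overline{\Omega}$; by compactness and continuity the value $m := \min_{\overline{\Omega}} u$ is attained at some $p^* \in \overline{\Omega}$, with $m < 0$.

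First I would locate the minimum on the boundary. By the strong minimum principle for harmonic functions, if $m$ is attained at an interior point then $u \equiv m$ on $\Omega$ (and hence on $\overline{\Omega}$ by continuity); the hypothesis $u \ge 0$ on $Q_2$ then forces $m \ge 0$, a contradiction (we tacitly assume $Q_2 \ne \emptyset$, otherwise the statement degenerates to a triviality or an additional assumption is required). Therefore $p^* \in \partial \Omega$ and $u(q) > m$ for every $q \in \Omega$. Since $u \ge 0$ on $Q_2$ but $u(p^*) = m < 0$, any such minimizing boundary point $p^*$ must lie in $Q_1$.

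At this stage Hopf's boundary-point lemma is the decisive tool. At $p^* \in Q_1$ the boundary has a tangent plane, the outer normal derivative $\partial u/\partial n(p^*)$ exists by hypothesis, and $u$ attains a strict minimum at $p^*$ relative to $\overline{\Omega}$; the Hopf lemma then yields $\partial u/\partial n(p^*) < 0$ strictly. This contradicts the hypothesis $\partial u/\partial n(p^*) = 0$ and completes the argument.

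The main obstacle is the applicability of Hopf's lemma itself. In its classical form the lemma requires an interior sphere condition at $p^*$, whereas the hypothesis provides only a tangent plane. In the intended applications of this lemma the set $Q_1$ will lie on smooth parts of the boundary (e.g.\ on the pieces $B_1$, $B_2$, $B_3$ for $W \in \calW_1$, which are $C^\infty$), so interior spheres are automatic and the classical Hopf lemma applies directly. For a fully general statement one would have to appeal to versions of Hopf's lemma valid under weaker regularity (Dini or $C^{1,\alpha}$ boundary at $p^*$), or else strengthen the geometric assumption defining $Q_1$. Apart from this regularity issue the proof is the standard mixed Dirichlet/Neumann maximum principle.
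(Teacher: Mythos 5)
Your argument is exactly the paper's proof: assume a negative value, use the minimum principle to place the minimum point in $Q_1$ (possible since $u \ge 0$ on $Q_2$, which indeed must be nonempty for the statement to be nontrivial), and then contradict $\frac{\partial u}{\partial n}=0$ there via a boundary-point lemma giving a strictly negative normal derivative. The regularity issue you flag is resolved in the paper not by restricting to smooth parts of the boundary but by invoking the normal derivative lemma in the form of Lemma 2.33 of \cite{ES1992}, which is stated precisely for a boundary point admitting a tangent plane at which the normal derivative exists, so no interior sphere condition is required.
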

\begin{proof}
On the contrary assume that there exists $p_0 \in \overline{\Omega}$ such that $u(p_0) < 0$. By the maximum principle for $u$ there exists $p_1 \in Q_1$ such that $0 > u(p_1) = \min_{p \in \overline{\Omega}} u(p)$. By the normal derivative lemma (see e.g. \cite{ES1992} Lemma 2.33) $\frac{\partial u}{\partial n}(p_1) < 0$ which gives a contradiction.
\end{proof}

Let us denote $\R^3_+ = \{(x,y,z): \, x > 0\}$, $B_+ = B \cap \R^3_+$, $F_+ = F \cap \R^3_+$, $B_{i+} = B_i \cap \R^3_+$, $i = 1, 2, 3, 4$.

\begin{lemma}
\label{lem:B2y}
Let $D \in \calD_1$ and $W = W(D) \in \calW_1$. If $\frac{\partial \psi}{\partial y} \ge 0$ on $B_2(D)$ then $\frac{\partial \psi}{\partial y} \ge 0$ on $D$ and $\frac{\partial \vp}{\partial y} \ge 0$ on $\overline{W_+}$.
\end{lemma}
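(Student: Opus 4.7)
The plan is to apply the boundary maximum principle (Lemma~\ref{lem:normal}) to the harmonic function $u = \partial\vp/\partial y$ on the half-domain $W_+$. Since $\vp$ is harmonic in $W$, so is $u$, and by Lemma~\ref{lem:smooth} we have $\vp\in C^{2,1}(\overline{W})$, so $u$ is continuous on $\overline{W_+}$ and its outward normal derivative is defined on every smooth part of $\partial W_+$. The choice of $W_+$ (rather than $W$) is essential because the sign information we have about $\vp$ and its $y$-derivative requires $\cos\theta\ge0$ to convert cleanly from $\psi$ to $\vp$.

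Next, I would check that on each piece of $\partial W_+$ either $u\ge0$ or $\partial u/\partial n=0$. On the free surface $F_+$, the Steklov condition gives $u=\nu\vp\ge0$ because $\vp = \psi\cos\theta>0$ on $W_+$. On the symmetry part $\overline{W}\cap\{x=0\}$, the factor $\cos\theta$ vanishes, so $\vp\equiv0$ and hence $u\equiv0$ (differentiating along the tangential $y$-direction). On the horizontal bottom $B_{4+}\subset\{y=y_0\}$ the outer normal is $-\hat{y}$, so the Neumann condition $\partial\vp/\partial n=0$ becomes $u=0$. On the slanted piece $B_{2+}$, the identity $u=(\partial\psi/\partial y)\cos\theta$ combined with the hypothesis $\partial\psi/\partial y\ge0$ on $B_2(D)$ and $\cos\theta\ge0$ on $\overline{W_+}$ delivers $u\ge0$. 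Finally, on the cylindrical pieces $B_{1+}$ (at $r=1$) and $B_{3+}$ (at $r=r_1$) the outer normal is radial, so differentiating the Neumann condition $\partial\vp/\partial r=0$ in $y$ yields $\partial u/\partial n=\partial^2\vp/(\partial r\,\partial y)=0$.

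With these ingredients I set $Q_1$ to be the smooth relatively open parts of $B_{1+}\cup B_{3+}$, where $\partial u/\partial n=0$, and let $Q_2=\partial W_+\setminus Q_1$; on $Q_2$ the inequality $u\ge0$ holds everywhere, by continuity at the edges where pieces meet. Lemma~\ref{lem:normal} then gives $u\ge0$ on $\overline{W_+}$, which is the second conclusion. Evaluating at $(x,y,0)$ with $x=r>0$ and invoking Lemma~\ref{lem:phipsi} yields $\partial\psi/\partial y(r,y)=u(x,y,0)\ge0$ for every $(r,y)\in D$, which is the first conclusion.

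The only real subtlety is the bookkeeping at the lower-dimensional sets where the pieces of $\partial W_+$ meet (the circles where $B_1,B_2,B_3,B_4$ abut each other and $F$, and the arcs where the symmetry plane $\{x=0\}$ meets these surfaces): at such points the outward normal, and hence $\partial u/\partial n$, is not well-defined. This is handled harmlessly by placing these measure-zero sets into $Q_2$, where Lemma~\ref{lem:normal} only requires the inequality $u\ge0$, which follows from continuity of $u$. A secondary point to justify carefully is that on $B_{1+}$ and $B_{3+}$ the Neumann condition $\partial\vp/\partial r=0$ can be differentiated in $y$ along the surface; this is legitimate because $y$ is a tangential coordinate on these vertical cylinders and $\vp\in C^{2,1}$ up to the boundary.
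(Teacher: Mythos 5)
Your proof is correct and follows essentially the same route as the paper: both set $u=\partial\vp/\partial y$ on $\Omega=W_+$, verify $u\ge 0$ on $F_+$, $B_{2+}$, $B_{4+}$ and the symmetry plane, verify $\partial u/\partial n=0$ on $B_{1+}\cup B_{3+}$, and then invoke Lemma~\ref{lem:normal} with $Q_1=B_{1+}\cup B_{3+}$. Your write-up is a bit more explicit about the edge sets and about why the Neumann condition can be differentiated tangentially on the cylindrical pieces, but the argument is the same.
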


\begin{proof}
We have
$$
\frac{\partial \vp}{\partial y}
= \frac{\partial \psi}{\partial y}(r,y) \cos \theta
$$
on $\overline{W}$, so by the assumption $\frac{\partial \psi}{\partial y} \ge 0$ on $B_2(D)$ we get $\frac{\partial \vp}{\partial y} \ge 0$ on $\overline{B_{2+}}$. 

Let us denote $u = \frac{\partial \vp}{\partial y}$.  Then $u$ is harmonic in $W_+$ and continuous in $\overline{W_+}$. We have $u \ge 0$ on $\overline{B_{2+}}$. We also have $u = 0$ on $\overline{B_{4+}}$. Since $\vp \equiv 0$ on $\{(x,y,z) \in \overline{W}: \, x = 0\}$ we get $u \equiv 0$ on $\{(x,y,z) \in \overline{W}: \, x = 0\}$. Recall that $\psi > 0$ on $D$ so $\vp \ge 0$ on $\overline{W_+}$. We get $u = \frac{\partial \vp}{\partial y} = \nu \vp \ge 0$ on $F_+$, so $u \ge 0$ on $\overline{F_{+}}$. Note also that $\frac{\partial u}{\partial n} = 0$ on $B_{1+} \cup B_{3+}$. Now the assertion of the lemma follows from Lemma \ref{lem:normal} for $\Omega = W_+$ and $Q_1 = B_{1+} \cup B_{3+}$.
\end{proof}

\begin{lemma}
\label{lem:nablauWF}
Let $D \in \calD(\varepsilon,M,H,1)$ and $W = W(D) \in \calW(\varepsilon,M,H,1)$. Assume that $u \in H^1(W)$, $u$ is odd with respect to the $x$-axis, $\alpha = \left(\int_F u^2 \right)^{1/2} > 0$ and $|\int_{F} \vp_W u| \ne \alpha$. Then we have
$$
\int_W |\nabla u|^2 > \nu_W \int_F u^2.
$$
\end{lemma}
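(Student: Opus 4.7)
The plan is to decompose $u$ orthogonally in $L^2(F)$ with respect to $\vp_W$, exploit the weak eigenvalue equation for $\vp_W$ to kill the cross term in the Dirichlet energy, and then apply the spectral gap estimate \eqref{perpg} to the orthogonal remainder. Since $\vp_W$ is the unique (up to a scalar) odd-in-$x$ eigenfunction at the lowest odd-in-$x$ eigenvalue $\nu_W$, the spectral gap $\ve_W > 0$ recorded in \eqref{perpg} is precisely the tool designed for this kind of estimate.

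Concretely, I would set $c := \int_F \vp_W u$ and $g := u - c \vp_W$. Oddness of $u$ and $\vp_W$ makes $g$ odd in $x$, and $\int_F \vp_W g = 0$ by construction, so \eqref{perpg} applies to $g$. Using the normalization $\int_F \vp_W^2 = 1$, we also get $\int_F u^2 = c^2 + \int_F g^2 = \alpha^2$. Expanding the Dirichlet energy produces three terms:
\begin{equation*}
\int_W |\nabla u|^2 = c^2 \int_W |\nabla \vp_W|^2 + 2c \int_W \nabla \vp_W \cdot \nabla g + \int_W |\nabla g|^2 .
\end{equation*}
The first term equals $c^2 \nu_W$ by the Rayleigh identity for $\vp_W$. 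The cross term vanishes because $\vp_W$ is a weak eigenfunction: testing the weak form of \eqref{lap}--\eqref{nc} against $g \in H^1(W)$ gives $\int_W \nabla \vp_W \cdot \nabla g = \nu_W \int_F \vp_W g = 0$. Applying \eqref{perpg} to the third term yields
\begin{equation*}
\int_W |\nabla u|^2 \ge c^2 \nu_W + (\nu_W + \ve_W) \int_F g^2 = \nu_W \alpha^2 + \ve_W \int_F g^2 .
\end{equation*}

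To close the argument I would observe that Cauchy--Schwarz gives $|c| \le \alpha$, so the hypothesis $|\int_F \vp_W u| \ne \alpha$ forces the strict inequality $|c| < \alpha$, and therefore $\int_F g^2 = \alpha^2 - c^2 > 0$; since $\ve_W > 0$, the strict inequality in the lemma follows. I do not anticipate any real obstacle here --- this is the textbook Rayleigh-quotient argument enabled by the spectral gap. The only step worth double-checking is the vanishing of the cross term, which relies solely on the weak formulation of the Steklov--Neumann problem for $\vp_W$ and therefore requires no regularity of $u$ beyond $u \in H^1(W)$.
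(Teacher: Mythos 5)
Your proof is correct and follows essentially the same route as the paper: the same decomposition $g = u - (\int_F \vp_W u)\,\vp_W$, the cross term killed by Green's formula (the weak eigenvalue equation), the Rayleigh identity $\int_W |\nabla \vp_W|^2 = \nu_W$, and the spectral gap \eqref{perpg} applied to $g$, with strictness coming from $\ve_W > 0$ and $\int_F g^2 = \alpha^2 - \beta^2 > 0$. No gaps.
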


\begin{proof}
Put $\beta = \int_{F} \vp_W u$ and $g = u - \beta \vp_W$. We have $\int_{F} \vp_W g = 0$ and $\int_F g^2 = \alpha^2 - \beta^2 \ne 0$. We also have
$$
\int_W |\nabla u|^2 = \int_W |\nabla g|^2 + \int_W |\beta \nabla \vp_W|^2 
+ 2 \int_W \nabla g \nabla \vp_W.
$$
By Green's formula one easily get $\int_W \nabla g \nabla \vp_W = 0$ (cf. proof of Lemma \ref{W1W2}). We have $\int_W |\nabla \vp_W|^2 = \nu_W$. Using (\ref{perpg}) we obtain $\int_W |\nabla g|^2 > \nu_W (\alpha^2 - \beta^2)$. This implies the assertion of the lemma.
\end{proof}

\begin{lemma}
\label{lem:B2x}
Let $D \in \calD_1$ and $W = W(D) \in \calW_1$. If $\frac{\partial \psi}{\partial y} \ge 0$ on $B_2(D)$  then $\frac{\partial \vp}{\partial x} \ge 0$ on $\overline{W}$.
\end{lemma}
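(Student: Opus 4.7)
The strategy is first to verify $u := \partial\vp/\partial x \ge 0$ on the non-Steklov portion of $\partial W$ by direct computation, and then to upgrade this to $u \ge 0$ everywhere on $\overline W$ via a variational argument based on Lemma~\ref{lem:nablauWF}.

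Apply Lemma~\ref{lem:B2y} to the hypothesis to obtain $\partial\psi/\partial y \ge 0$ on $D$ and $\partial\vp/\partial y \ge 0$ on $\overline{W_+}$. Using the identity \eqref{eq:psixphix}, $u = \cos^2\theta\,\partial_r\psi + (\sin^2\theta/r)\,\psi$, I would verify $u \ge 0$ on each non-Steklov boundary piece. On the vertical walls $B_1$ and $B_3$ the Neumann condition forces $\partial_r\psi = 0$, so $u = (\psi/r)\sin^2\theta \ge 0$. On the sloping part $B_2$, parametrized by $(r(t),y(t))$ with $r'(t), y'(t) \ge 0$, the Neumann condition in the $(r,y)$-plane reads $y'(t)\,\partial_r\psi = r'(t)\,\partial_y\psi$; combined with $\partial_y\psi \ge 0$, this forces $\partial_r\psi \ge 0$, hence $u \ge 0$. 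On $\{x = 0\} \cap \overline W$ we have $u = \psi/r \ge 0$ (reading as $\partial_r\psi|_{r=0} \ge 0$ on the axis, where $\psi$ vanishes).

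Now suppose for contradiction that $u < 0$ at some point of $W$. Set $u_- = \max(-u,0) \in H^1(W)$ and $w = \operatorname{sgn}(x)\,u_-$. Since $u \ge 0$ on $\{x = 0\}$, $u_-$ vanishes there, so $w \in H^1(W)$ is well-defined and odd in $x$. Note that $u$ is harmonic in $W$, satisfies $\partial u/\partial y = \nu u$ on $F$, and $\partial u/\partial n = 0$ on $B_4$ (from $\partial\vp/\partial y = 0$ on the horizontal piece $B_4$ differentiated tangentially in $x$). Using Green's identity together with $\Delta u = 0$, the relation $\nabla u_- = -\nabla u \cdot \I_{\{u<0\}}$, and $u \cdot u_- = -u_-^2$ on $F$, I would compute
\begin{equation*}
\int_W |\nabla u_-|^2 = -\int_{\partial W} u_-\, \frac{\partial u}{\partial n} = \nu \int_F u_-^2 ,
\end{equation*}
the only surviving boundary contribution being from $F$ (on $B_4$ we have $\partial u/\partial n = 0$; on $B_1 \cup B_2 \cup B_3$ we have $u_- = 0$). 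By the symmetry of $u_-$ in $x$, this yields $\int_W |\nabla w|^2 = \nu \int_F w^2$.

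If $\int_F w^2 = 0$ then $w$ is constant and vanishes on $F$, so $w \equiv 0$, contradicting $u < 0$ somewhere. Otherwise Lemma~\ref{lem:nablauWF} applied to the odd-in-$x$ function $w$ forces Cauchy--Schwarz equality $|\int_F \vp\,w| = (\int_F w^2)^{1/2}$, so $w = c\vp$ on $F$ for some $c \in \R$. Setting $g = w - c\vp$, I would show $g|_F = 0$ and, via Green's formula together with the Steklov condition for $\vp$ (yielding $\int_W \nabla w \cdot \nabla\vp = c\nu$), that $\int_W |\nabla g|^2 = 0$; hence $w = c\vp$ on $W$. But $w \equiv 0$ on $B_1$ (from the first part of the argument), whereas $\vp = \psi(1,y)\cos\theta$ is not identically zero on $B_1$ (since $\psi > 0$ on $B_1(D)$ by the strong maximum principle and Hopf's lemma applied to the two-dimensional problem for $\psi$), forcing $c = 0$ and hence $w \equiv 0$. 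This contradicts the assumption and proves $\partial\vp/\partial x \ge 0$ on $\overline W$. The main obstacle is the variational step: the test function $w$ must be crafted to simultaneously be odd in $x$ (to feed Lemma~\ref{lem:nablauWF}), belong to $H^1(W)$ (which needs $u \ge 0$ on $\{x=0\}$), and saturate the Rayleigh minimum so that $w$ must be proportional to $\vp$.
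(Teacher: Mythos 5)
Your proof is correct. The setup (checking $u = \partial\vp/\partial x \ge 0$ on the non-Steklov boundary, constructing an odd-in-$x$ test function from the negative part of $u$, and invoking Lemma~\ref{lem:nablauWF}) matches the paper's, but your endgame is genuinely different. The paper restricts attention to a single connected component $V_+$ of $\{u < 0\}$, builds the test function by reflecting $u$ across $\{x = 0\}$ inside $V_+ \cup V_-$, and argues directly that the resulting function is not proportional to $\vp$ on $F$ because $F(V_+)$ is a proper subset of $\relint(\partial W_+ \cap F)$ --- there exist points of $F_+$ where $\partial\vp/\partial x > 0$, and the test function vanishes on a relatively open part of $F$ that $\vp$ does not. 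You instead take the full negative part $u_- = \max(-u,0)$, form $w = \operatorname{sgn}(x)u_-$, obtain the Rayleigh equality $\int_W|\nabla w|^2 = \nu\int_F w^2$, and then run the equality-case analysis: Cauchy--Schwarz saturation gives $w = c\vp$ on $F$, and a short energy computation upgrades this to $w = c\vp$ on all of $W$; finally $w\equiv 0$ on $B_1$ while $\vp\not\equiv 0$ on $B_1$ (via Hopf applied to the $2$D problem for $\psi$) forces $c = 0$. Both routes are valid. The paper's is slightly shorter because non-proportionality on $F$ is read off directly from the supports; yours avoids the need to observe that $F(V_+)$ is proper but pays for it with the rigidity step $w = c\vp$ on $W$ and the positivity of $\psi$ on $B_1(D)$. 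One minor point to tighten: on the axis $\{r = 0\}$ the identity \eqref{eq:psixphix} degenerates; the paper instead deduces $\partial\vp/\partial x \ge 0$ on $\overline{W_0}$ from the fact that $\vp$ changes sign across $\{x=0\}$, which is cleaner than the $\psi/r$ reading you give.
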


\begin{proof}
Since $B_2(D)$ is a graph of an increasing function $y = g(r)$,
$\frac{\partial \psi}{\partial n} = 0$ on $B_2(D)$ and $\frac{\partial \psi}{\partial y} \ge 0$ on $B_2(D)$, we get $\frac{\partial \psi}{\partial r} \ge 0$ on $B_2(D)$. We also have  $\frac{\partial \psi}{\partial r} = 0$ on $B_1(D) \cup B_3(D)$ so $\frac{\partial \psi}{\partial r} \ge 0$ on $B_1(D) \cup B_2(D) \cup B_3(D)$. By (\ref{eq:psixphix}) $\frac{\partial \vp}{\partial x} \ge 0$ on $\overline{B_1} \cup \overline{B_2} \cup \overline{B_3}$. Let $W_- = \{(x,y,z) \in W: \, x < 0 \}$ and $W_0 = \{(x,y,z) \in W: \, x = 0 \}$. Since $\vp < 0$ on $W_-$ and $\vp > 0$ on $W_+$ we get $\frac{\partial \vp}{\partial x} \ge 0$ on $\overline{W_0}$.

Assume, contrary to the hypothesis of the lemma, that there exists $ (x_*,y_*,z_*) \in W$ such that $\frac{\partial \vp}{\partial x}(x_*,y_*,z_*) < 0$. Since $\frac{\partial \vp}{\partial x}(x,y,z) = \frac{\partial \vp}{\partial x}(-x,y,z)$ we may  assume that $( x_*,y_*,z_*) \in W_+$. Let $V_+$ be the connected component of the set $\{(x,y,z) \in W: \,  \frac{\partial \vp}{\partial x}(x,y,z) < 0\}$ containing $( x_*,y_*,z_*)$. Since $\frac{\partial \vp}{\partial x} \ge 0$ on $\overline{B_1} \cup \overline{B_2} \cup \overline{B_3} \cup \overline{W_0}$ we have $V_+ \subset W_+$. 

\begin{figure}
\centering
\includegraphics[scale=0.8]{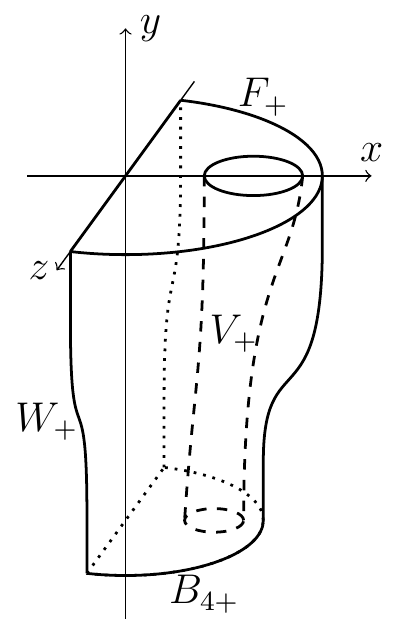}
\caption{The set $V_+$.}
\label{fig:6}
\end{figure}

Let 
\[
F(V_+) =  \relint (\partial{V_+} \cap F) 
\quad \text{and} \quad
B_4(V_+) =  \relintyzero (\partial{V_+} \cap B_4) 
\]
(see Figure~\ref{fig:6}). By  $\relint$, $\relintyzero$, we denote the  relative interior of a set in a $2$-dimensional plane. Recall that $-y_0$ is the depth of a liquid domain $D \in \calD_1$, see Definition~\ref{classD1}.

Since $\frac{\partial \vp}{\partial x} \ge 0$ on $\overline{B_1} \cup \overline{B_2} \cup \overline{B_3} \cup \overline{W_0}$ and $V_+$ is the connected component of the set $\{(x,y,z) \in W: \,  \frac{\partial \vp}{\partial x}(x,y,z) < 0\}$ we have $\frac{\partial \vp}{\partial x} = 0$ on
$\partial V_+ \setminus (F(V_+) \cup B_4(V_+))$. We also have
$$
\frac{\partial}{\partial n} \left(\frac{\partial \vp}{\partial x}\right) = 0
\quad \text{on} \quad B_4(V_+)
$$
and
$$
\frac{\partial}{\partial n} \left(\frac{\partial \vp}{\partial x}\right) =
\frac{\partial}{\partial y} \left(\frac{\partial \vp}{\partial x}\right) =
\nu_W \frac{\partial \vp}{\partial x}
\quad \text{on} \quad F(V_+).
$$
Let $V_- = \{(x,y,z) \in W: \,  (-x,y,z) \in V_+\}$. Of course $V_- \subset W_-$. Let us define the function $u$ in the following way. We put $u = \frac{\partial \vp}{\partial x}$ on $V_+$, $u = -\frac{\partial \vp}{\partial x}$ on $V_-$ and $u = 0$ on $\overline{W} \setminus (V_+ \cup V_-)$. We have 
$u \in C(\overline{W})$ and $u$ is odd with respect to the $x$-axis. By Green's formula we also get
\begin{equation}
\label{eq:nablau2}
\begin{aligned}
\int_W & |\nabla u|^2 
= 2 \int_{V_+} |\nabla u|^2 \\
&= 2 
\left( \int_{F(V_+)} \frac{\partial u}{\partial n} u
+ \int_{B_4(V_+)} \frac{\partial u}{\partial n} u
+ \int_{\partial V_+ \setminus (F(V_+) \cup B_4(V_+))} \frac{\partial u}{\partial n} u
- \int_{V_+} (\Delta u) u \right) \\
&= 2 \nu_W \int_{F(V_+)} u^2 = \nu_W \int_{F} u^2.
\end{aligned}
\end{equation}
We have $F(V_+) \ne \emptyset$ and $\int_F u^2 > 0$ because otherwise $u \equiv 0$ on $V_+$ which is impossible. Of course, there are points on $\relint(\partial{W_+} \cap F)$ for which $\frac{\partial \vp}{\partial x} > 0$, so $F(V_+) \ne \relint (\partial{W_+} \cap F)$. This implies that $u$ and $\vp_W$ are not linearly dependent  on $F$, so $|\int_F \varphi_W u| \ne |\int_F u^2|^{1/2}$.  By Lemma \ref{lem:nablauWF} we get 
$$
\int_W |\nabla u|^2 > \nu_W \int_{F} u^2,
$$
which contradicts \eqref{eq:nablau2}.
\end{proof}

\begin{lemma}
\label{lem:cylinder}
Let $D \in \calD_1$ and $W = W(D) \in \calW_1$. Assume that there exists $h > 0$ such that $W$ contains the cylinder $\{(x,y,z): \, x^2 + z^2 < 1, \, 0 > y > -h\}$. Then there exists $C(h)$ such that for all $(x,y,z) \in W$ such that $y \le - h$ we have
$$
|\vp(x,y,z)| \le C(h).
$$
\end{lemma}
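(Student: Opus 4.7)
The strategy is to obtain a uniform pointwise bound on $|\varphi|$ on the horizontal cross-section $K := \overline{W}\cap\{y = -h/2\}$ by combining the explicit Fourier--Bessel structure of $\varphi$ inside the cylindrical piece $U_h$ with the global energy bound from Lemma~\ref{uniform}, and then to propagate this bound to $W\cap\{y\le -h\}$ via the maximum principle.

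In the cylinder $U_h$, the function $\psi(r,y)$ is smooth and satisfies the axisymmetric eigenvalue equation with $\psi(0,\cdot) = 0$, $\partial_r\psi(1,\cdot) = 0$, and $\partial_y\psi(\cdot,0) = \nu\psi(\cdot,0)$. I would expand $\psi$ in the Fourier--Bessel basis:
\[
  \psi(r,y) = \sum_{k=1}^{\infty} a_k\, \phi_k(y)\, J_1(\lambda_k r),
\]
where $\{\lambda_k\}$ are the positive zeros of $J_1'$ (so $\lambda_1 = j'_{1,1} > \nu$ by Lemma~\ref{lem:eigencyl}) and $\phi_k(y) = \cosh(\lambda_k y) + (\nu/\lambda_k)\sinh(\lambda_k y)$ is the combination of $e^{\pm\lambda_k y}$ enforcing the Steklov condition $\phi_k'(0) = \nu\phi_k(0)$. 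Restricting the bound $\int_W|\nabla\varphi|^2\le C_1$ from Lemma~\ref{uniform} to $U_h$, using only the $y$-derivative, and invoking Parseval (with $N_k^2 = \int_0^1 J_1(\lambda_k r)^2 r\,dr$) yields
\[
  \pi\sum_k a_k^2 N_k^2 \int_{-h}^0 \phi_k'(y)^2\, dy \le C_1.
\]
Since $\phi_k'(y) \sim -\tfrac{1}{2}(\lambda_k-\nu)e^{-\lambda_k y}$ as $-\lambda_k y \to \infty$, a direct integration gives $\int_{-h}^0 \phi_k'(y)^2\,dy \ge c\,\lambda_k e^{2\lambda_k h}$ for all large $k$ (with $c$ absolute), and a positive lower bound for each of the finitely many remaining small $k$. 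This translates into $|a_k|\,N_k \le C(h)/(\sqrt{\lambda_k}\,e^{\lambda_k h})$.

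At $y = -h/2$ we have $|\phi_k(-h/2)| \le e^{\lambda_k h/2}$ and $|J_1|\le 1$. Using the classical asymptotics $\lambda_k N_k^2 \to 1/\pi$ and $\lambda_{k+1}-\lambda_k \to \pi$,
\[
  |\psi(r,-h/2)| \le \sum_k |a_k\,\phi_k(-h/2)| \le C(h)\sum_k \frac{e^{-\lambda_k h/2}}{\sqrt{\lambda_k}\,N_k} \le C'(h),
\]
uniformly in $r\in[0,1]$, hence $|\varphi|\le C''(h)$ on the closed disk $K$. To propagate this bound downward, I apply the maximum principle to the subharmonic function $\varphi^2$ on $\Omega := W\cap\{y<-h/2\}$: on smooth portions of $B\cap\partial\Omega$ the Neumann condition gives $\partial_n\varphi^2 = 0$, and Hopf's lemma rules out the supremum being attained there; the finitely many corner circles of $B$ lying in $\overline\Omega$ (e.g.\ where $B_1$ meets $B_2$, $B_3$ meets $B_4$) are convex corners for which an interior ball condition holds, and they are handled via a Hopf-type argument at edges combined with the regularity $\varphi\in C^{2,1}(\overline W)$ of Lemma~\ref{lem:smooth}. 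Therefore $\sup_{\overline\Omega}\varphi^2 \le \sup_K \varphi^2 \le C''(h)^2$, and since $W\cap\{y\le -h\}\subset\overline\Omega$, the claimed bound follows.

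The main technical obstacle is the quantitative step converting the global $H^1$ energy bound into sufficient exponential decay of the Fourier--Bessel coefficients for the pointwise series at $y = -h/2$ to converge uniformly in $r$; this is precisely where the strict inequality $\nu < \lambda_1$ (Lemma~\ref{lem:eigencyl}) enters and provides the damping factor $e^{-\lambda_k h/2}$. The maximum-principle propagation is comparatively routine, modulo the standard care required at the finitely many corner circles of $B$.
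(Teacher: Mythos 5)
Your proposal is correct in outline but takes a genuinely different route from the paper. The paper's own proof is much softer: it never expands anything in the cylinder. It sets $W'=W\cap\{y<-h/2\}$, uses the maximum principle together with the normal derivative lemma on the Neumann part $B$ to conclude that $\sup_{W'}|\varphi|=|\varphi(p_0)|$ for some $p_0$ at the level $y=-h/2$, then applies the Harnack inequality up to the Neumann boundary (Bass--Hsu) to get $\varphi\ge M/2$ on a ball $B(p_0,\delta(h))\cap W$ of volume at least $C_1\delta^3$, and compares with the uniform bound $\int_W\varphi^2\le C_2$ of Lemma~\ref{uniform}; the cylinder hypothesis enters only to guarantee that the boundary geometry near the level $y=-h/2$ is uniformly flat, so that $\delta$ depends on $h$ alone. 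Your argument replaces the Harnack/$L^2$ mechanism by the explicit Fourier--Bessel structure in the cylindrical collar, fed by the energy bound $\int_W|\nabla\varphi|^2\le C_1$ and the spectral gap $\nu<j'_{1,1}$ (Lemma~\ref{lem:eigencyl} and the proof of Lemma~\ref{uniform}), and then propagates the cross-sectional bound downward by the maximum principle; this is heavier but quantitative and makes the exponential damping in depth explicit. A few points you should tighten to make it airtight: (i) the Dini expansion with Neumann data on $\{r=1,\,-h<y<0\}$ is legitimate only because for $D\in\calD_1$ the cross-sectional radius is nonincreasing with depth and equals $1$ at the top, so containment of the cylinder forces the lateral wall of $U_h$ to lie in $B_1$ -- state this; (ii) your lower bound $\int_{-h}^0(\phi_k')^2\,dy\ge c\,\lambda_k e^{2\lambda_k h}$ holds with an absolute $c$ only once $\lambda_k h$ exceeds an absolute threshold, so the constant is really $c(h)$ for $k\ge k_0(h)$, and the finitely many remaining modes need a lower bound that is uniform in $\nu\in(0,j'_{1,1}]$ (which does hold by continuity and compactness, and $\nu\ge\nu_{1,1}(U_h)$ by Lemma~\ref{monotonicity2} if you prefer a quantitative floor); (iii) in the propagation step the only genuine edge of $B$ below $y=-h/2$ is the right-angle circle where $B_3$ meets $B_4$ (the curve $B_1\cup B_2\cup B_3$ is smooth by Definition~\ref{classD1}(e)), and it can be handled by Neumann reflection across $\{y=y_0\}$ rather than a vague ``Hopf-type argument at edges''; the paper's proof glosses over the same point, so this is not a defect relative to it, but it should be said. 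With these adjustments your plan yields the lemma, with a constant depending only on $h$, as required for the later use in Lemma~\ref{lem:uniformnorm}.
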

\begin{proof}
Denote by $W'$ the set $\{(x, y, z) \in W : \, -h/2 > y \}$. Let $M$ be the supremum of $|\vp|$ over $W'$. By the maximum principle, the supremum is attained at the boundary of $W'$. By the normal derivative lemma (see \cite[Lemma 2.33]{ES1992}), since $\vp$ is not constant, it cannot attain its supremum or infimum at $\partial W' \cap \{(x,y,z): \, y < -h\}$, the part of the boundary where $\vp$ satisfies Neumann boundary condition. It follows that $M = |\vp(p_0)|$ for some $p_0 = (x_0, -h/2, z_0) \in \partial  W'$.

 By the Harnack inequality up to the Neumann boundary (see~\cite[Theorem~3.9]{BH1991}), there is $\delta = \delta(h) \in (0,h/2 \wedge 1)$ such that $\vp(p) \ge M/2$ for $p \in B(p_0, \delta) \cap W$. Furthermore, $|B(p_0, \delta) \cap W| \ge C_1 \delta^3$, where $C_1$ is an absolute constant. It follows that
\[
 \int_W \vp^2 \ge \int_{B(p_0, \delta) \cap W} \frac{M^2}{4} \ge \frac{C_1 \delta^3 M^2}{4} .
\]
On the other hand, by Lemma~\ref{uniform}, the left-hand side is bounded above by an absolute constant.
\end{proof}

\begin{figure}
\centering
\includegraphics[scale=0.8]{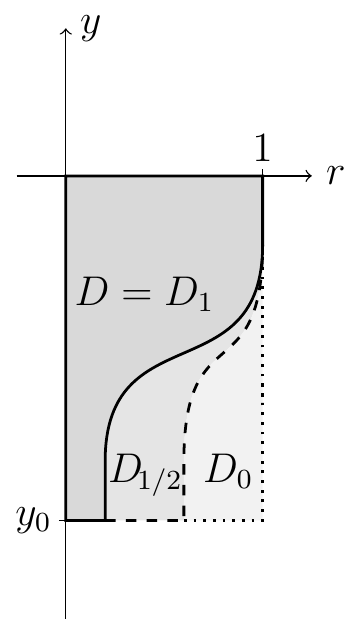}
\caption{Family of domains $D_s$.}
\label{fig:7}
\end{figure}

\begin{figure}
\centering
\includegraphics[scale=0.8]{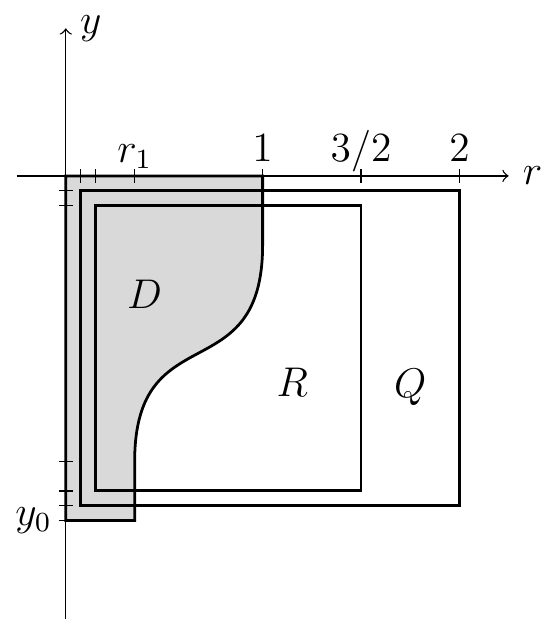}
\caption{An auxiliary picture for Lemma \ref{lem:uniformnorm}.}
\label{fig:8}
\end{figure}

For a fixed domain $D \in \calD_1$ we define the family of domains $D_s$, $s \in [0,1]$ such that $D_1 = D$ and $D_0$ is a rectangle $(0,1) \times (y_0,0)$ (see Figure~\ref{fig:7}).  Let $y_0$, $r_1$, $r(t)$, $y(t)$  be as in the definition of class $\calD_1$ for the domain $D$. For $y \in [y_0,0]$ put $g(y) = r(r_1 + y_0 - y)$. Note that $D = \{(r,y): \, y \in (y_0,0), \, r \in (0,g(y))\}$. The domain $D_s  \in \calD_1$ for $s \in (0,1)$ is defined by 
$$
D_s = \{(r,y): \, y \in (y_0,0), \, r \in (0,1 - s + sg(y))\}.
$$
 We denote $1 - s + s g(y) = g_s(y)$, so that $D_s = \{(r,y): y \in (y_0, 0) , \, r \in (0, g_s(y)) \}$.

\begin{lemma}
\label{lem:uniformnorm}
Let $D \in \calD_1$. We define a rectangle $R$ (see Figure~\ref{fig:8}) by
$$
R = (r_1/2,3/2) \times (y_1^R,y_2^R),
$$
where $y_1^R = y_0 + (T_2 - T_1)/2$, $y_2^R = -(T - T_3)/2$ and $r_1$, $y_0$, $T_1$, $T_2$, $T_3$, $T$ are taken from the definition of class $\calD_1$ for the domain $D$.
Let $D_s$, $s \in [0,1]$, be the family of domains constructed before the statement of this lemma for the domain $D$, and let $W_s = W(D_s)$. There exist a constant $a = a(D)$ such that for all $s \in [0,1]$
\begin{equation}
\label{eq:uniformnorm}
\sup_{p \in \overline{D_s \cap R}} 
\sum_{\alpha + \beta = 2} \left|\frac{\partial^{\alpha + \beta}}{\partial^{\alpha} r \partial^{\beta} y} \psi_{D_s}(p) \right|  \le a.
\end{equation}
Moreover, there exist constants $\ve = \ve(D)$, $H = H(D)$ such that for all $s, q \in [0,1]$ 
$$
W_s \in \calW(\ve,1,H,1) 
\quad \text{and} \quad
d(W_s,W_q) \to 0
\quad \text{as} \quad
s \to q.
$$
\end{lemma}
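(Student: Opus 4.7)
The argument breaks into three parts. To establish $W_s \in \calW(\ve,1,H,1)$, I would read the boundary $B(D_s)$ from $y_0$ upward: it consists of a horizontal bottom of length $1 - s + s r_1 \ge r_1$, a vertical segment at $r = 1 - s + s r_1$ of length $T_2 - T_1$, the smooth strictly increasing graph $r = g_s(y) := 1 - s + s g(y)$ on $[y(T_2), y(T_3)]$ (where $g$ describes $B(D)$), and a vertical segment at $r = 1$ of length $T - T_3$. This fits Definition \ref{classD1} with the `$r_1$' parameter equal to $1 - s + s r_1$; the straight-line segments near all four corners have length at least $\ve := \min(r_1, T_2 - T_1, T - T_3)/2$, and the Lipschitz constant required in Definition \ref{classD}(iii)(b) is $M = 1$ since these pieces are either affine or constant. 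Thus $W_s \in \calW(\ve, 1, H, 1)$ with $H = |y_0| + 1$, uniformly in $s \in [0,1]$.

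For the continuity claim $d(W_s, W_q) \to 0$, I would reparametrize each $\partial D_s$ so that the four pieces listed above are mapped onto four fixed subintervals of a common interval. Because the endpoints of these pieces are affine in $s$ and $g_s = 1 - s + s g$ is affine in $s$ with $g$ fixed, the resulting parametrizations $(r_s(t), y_s(t))$ converge uniformly to $(r_q(t), y_q(t))$ as $s \to q$, so Lemma \ref{lem:uniformapprox1} yields $d(W_s, W_q) \to 0$.

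The crux is the uniform bound (\ref{eq:uniformnorm}). My plan is to apply Schauder estimates on a slight enlargement $R' \supsetneq R$ chosen so that $\overline{R'} \subset \{y < 0\}$ and $\overline{D_s \cap R} \subset R' \cap D_s$. First I would establish a uniform $L^\infty$ bound on $\psi_{D_s}$ over $R' \cap D_s$ by exploiting the fact that $g_s(y) = 1$ for $y \in (-(T - T_3), 0)$, which means each $W_s$ contains the open unit cylinder of height $T - T_3$; choosing $h \in (0, T - T_3)$ so that $R' \subset \{y \le -h\}$, Lemma \ref{lem:cylinder} combined with Lemma \ref{uniform} gives $|\vp_{W_s}| \le C(h)$ on $\{y \le -h\}$, and hence $|\psi_{D_s}| \le C(h)$ on $R' \cap D_s$. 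With this $L^\infty$ control in hand, I would invoke interior Schauder estimates on the portion of $\partial(R' \cap D_s)$ lying inside $D_s$ and Neumann Schauder estimates on $\partial D_s \cap R'$. The coefficients of (\ref{lapD}) with $m = 1$ are smooth and uniformly bounded on $\{r \ge r_1/2\}$, and the moving Neumann boundary is the graph of $g_s = 1 - s + s g$ whose $C^{2,\alpha}$ norm is bounded in $s$ by that of $g$; together these yield a $C^{2,\alpha}$ bound on $\overline{D_s \cap R}$ with constant $a$ depending only on $D$, which in particular implies (\ref{eq:uniformnorm}).

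The principal difficulty I anticipate lies in this third step: combining a uniform-in-$s$ $L^\infty$ bound with uniform-in-$s$ boundary regularity to push Schauder estimates through with a single constant $a(D)$. The linear dependence of $g_s$ on $s$ and the fact that every $W_s$ contains the same cylindrical neighborhood of the free surface (so that Lemma \ref{lem:cylinder} applies uniformly) are exactly what make both ingredients simultaneously available.
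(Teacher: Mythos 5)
Your proposal is correct and takes essentially the same route as the paper. All three ingredients match: direct verification of the $\calW(\ve,1,H,1)$ membership, Lemma~\ref{lem:uniformapprox1} for the distance convergence, and the key $C^{2}$ bound obtained by combining the uniform $L^\infty$ control of Lemma~\ref{lem:cylinder} (valid because every $W_s$ contains the same cylindrical collar near $F$, since $g_s\equiv 1$ on $(-(T-T_3),0)$) with a Schauder-type estimate up to the Neumann boundary, uniform in $s$ because $g_s = 1-s+sg$ has $C^{2,\alpha}$ norm controlled by that of $g$.

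The one thing the paper does that you leave implicit is the flattening change of variables $\tilde y = y$, $\tilde r = r/g_s(y)$, which maps $Q\cap\partial D_s$ to the fixed line $\{\tilde r = 1\}$ and reduces the problem to applying \cite[Lemma~6.29]{GT1977} on a domain with a flat boundary portion. The paper then verifies by hand that the transformed operator is uniformly elliptic (with $\lambda(D)$ independent of $s$), that its coefficients are uniformly bounded, and that $\dist(U_s,\partial\Omega_s\setminus T)$ is bounded below uniformly in $s$. You instead propose to cite a boundary Schauder estimate directly on the curved Neumann boundary; this is equivalent in substance (the standard proof of such estimates passes through exactly this sort of flattening), but if you fill in the details you would need to check the same uniformity claims — in particular that the constant in the Schauder estimate, which depends on the $C^{2,\alpha}$ modulus of the boundary and the ellipticity and coefficient bounds, is uniform over $s\in[0,1]$, and that the slight enlargement $R'$ keeps $\overline{D_s\cap R}$ uniformly away from the artificial cut $\partial R'\cap D_s$. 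The paper's explicit change of variables is precisely the mechanism that makes these checks transparent.
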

The key point in this lemma is that the constant $a$ does not depend on $s$. The proof of this result follows by elementary but tedious calculations.
\begin{proof}
We define a rectangle (see Figure~\ref{fig:8}) $Q = (r_1/4,2) \times (y_1^Q,y_2^Q)$, where $y_1^Q = y_0 + (T_2 - T_1)/4$ and $y_2^Q = -(T - T_3)/4$. For each $s \in [0,1]$ let us consider the following change of variables:
$$
\tilde{y} = y, \quad \tilde{r} = \frac{r}{g_s(y)}.
$$ 
This change of variables is chosen so that the curve $Q \cap \partial D_s$ is straightened in these new variables. In fact this transformation of variables changes $Q \cap \partial D_s$ to $\{(\tilde{r},\tilde{y}): \, \tilde{y} \in (y_1^Q,y_2^Q), \, \tilde{r} = 1\}$.

 Since $g_s(y) = g_s(\tilde{y})$, $g_s'(y) = s g'(\tilde{y})$, and $\pd{y} (\frac{r}{g_s(y)}) = - \frac{r s g'(y)}{(g_s(y))^2} = -\frac{\tilde{r} s g'(\tilde{y})}{g_s(\tilde{y})}$, we have 
\begin{align*}
\frac{\partial}{\partial r} & = \frac{1}{ g_s(\tilde{y})} \, \frac{\partial}{\partial \tilde{r}}, &
\frac{\partial}{\partial y} & = \frac{\partial}{\partial \tilde{y}} 
 - \frac{\tilde{r} s g'(\tilde{y})}{g_s(\tilde{y})} \, \frac{\partial}{\partial \tilde{r}}, &
\frac{\partial^2}{\partial r^2} & = \frac{1}{(g_s(\tilde{y}))^2} \, \frac{\partial^2}{\partial \tilde{r}^2},
\end{align*}
\begin{align*}
\frac{\partial^2}{\partial y^2} & = \frac{\partial^2}{\partial \tilde{y}^2}
- \frac{2 \tilde{r} s g'(\tilde{y})}{ g_s(\tilde{y})} \frac{\partial^2}{\partial \tilde{y} \partial \tilde{r}} +
\left(\frac{\tilde{r} s g'(\tilde{y})}{ g_s(\tilde{y})}\right)^2
\frac{\partial^2}{\partial \tilde{r}^2}
+
\frac{2 \tilde{r} s^2 (g'(\tilde{y}))^2 - \tilde{r} s g''(\tilde{y})  g_s(\tilde{y})}{( g_s(\tilde{y}))^2} \frac{\partial}{\partial \tilde{r}}.
\end{align*}
The set $Q \cap D_s$ is transformed in the new variables to the set $\Omega_s := \{(\tilde{r},\tilde{y}): \, \tilde{y} \in (y_1^Q,y_2^Q), \, \tilde{r} \in ( r_1/(4  g_s(\tilde{y})),1)\}$. Let us put $\psi_s = \psi_{D_s}$. Since $\psi_s$ satisfies (\ref{lapD}) in $D_s$ for $m = 1$, we obtain that, in the new coordinates,  $\psi_s$ satisfies in $\Omega_s$:
\begin{equation}
\label{opl}
\begin{aligned}
 & \frac{1  + (\tilde{r} s g'(\tilde{y}))^2}{(g_s(\tilde{y}))^2} \, \frac{\partial^2 \psi_s}{\partial \tilde{r}^2}
 + \frac{\partial^2 \psi_s}{\partial \tilde{y}^2}
 - \frac{2 \tilde{r} s g'(\tilde{y})}{g_s(\tilde{y})} \frac{\partial^2 \psi_s}{\partial \tilde{y} \partial \tilde{r}} \\
 & \qquad + \frac{1 + 2 (\tilde{r} s g'(\tilde{y}))^2 - \tilde{r}^2 s g''(\tilde{y}) g_s(\tilde{y})}{\tilde{r} (g_s(\tilde{y}))^2} \frac{\partial \psi_s}{\partial \tilde{r}} - \frac{1}{(\tilde{r} g_s(\tilde{y}))^2} \, \psi_s = 0 .
\end{aligned}
\end{equation}
Now we will  verify the assumptions of \cite[Lemma~6.29]{GT1977}.  Note that $0 <  r_1 \le g_s(\tilde{y}) \le 1$, and $0 \le \tilde{r} s g'(\tilde{y}) / g_s(\tilde{y}) \le C_1(D)$ on $\Omega_s$. Hence, by an elementary calculation, there exists a constant $\lambda  = \lambda(D) > 0$ such that for all $\xi, \eta \in \R$, $s \in [0,1]$, $(\tilde{r},\tilde{y}) \in \Omega_s$, we have $(\xi -  (\tilde{r} s g'(\tilde{y}) / g_s(\tilde{y})) \eta)^2 + (1 / (g_s(\tilde{y}))^2) \eta^2 \ge \lambda (\xi^2 + \eta^2)$. This means that for all $s \in [0,1]$,  the operator in~\eqref{opl} is strictly elliptic on $\Omega_s$ with a constant $\lambda$ which does not depend on $s$.

Note that the unit outer normal derivative $\frac{\partial}{\partial n}$ on $Q \cap D_s$ equals
\[
\frac{\partial}{\partial n} =  \frac{1}{(1 + (s g'(y))^2)^{1/2}} \left( \frac{\partial}{\partial r} - s g'(y) \frac{\partial}{\partial y} \right) .
\]
Since $\psi_s$ satisfies $\frac{\partial \psi_s}{\partial n} = 0$ on $Q \cap \partial D_s$ we obtain that in the new coordinates $\psi_s$ satisfies
\begin{equation}
\label{eq:boundary}
\frac{1 + (s g'(\tilde{y}))^2 \tilde{r}}{g_s(\tilde{y})} \frac{\partial \psi_s}{\partial \tilde{r}} (\tilde{r},\tilde{y}) - s g'(\tilde{y}) \frac{\partial \psi_s}{\partial \tilde{y}} (\tilde{r},\tilde{y}) = 0,
\end{equation}
for $\tilde{r} = 1$ and $\tilde{y} \in (y_1^Q,y_2^Q)$. Note that the coefficient at $\frac{\partial \psi_s}{\partial \tilde{r}}$ in the above formula is bounded from below by $\kappa = 1$, which does not depend on $s$.

We will use \cite[Lemma~6.29]{GT1977} for $\alpha = 1$, $u = \psi_s$, $\Omega = \Omega_s$, $T = \{(\tilde{r},\tilde{y}): \, \tilde{r} = 1, \, \tilde{y} \in (y_1^Q,y_2^Q)\}$ and $L$ the operator in \eqref{opl}. We have already checked that $\kappa > 0$ does not depend on $s$ and that the operator $L$ on $\Omega_s$ is strictly elliptic with a constant $\lambda$ not depending on $s$. It is also easy to check that absolute values of all the coefficients of $L$ on $\Omega_s$ and the coefficients in (\ref{eq:boundary}) on $T$ are bounded from above by a constant not depending on $s$. 

Note that $R \cap D_s$ is transformed in new variables to the set $U_s := \{(\tilde{r},\tilde{y}): \, \tilde{y} \in (y_1^R,y_2^R), \, \tilde{r} \in (r_1/(2  g_s(\tilde{y}) ),1)\}$. It is easy to check that
$$
\sup_{(r,y)\in \overline{D_s \cap R}} 
\sum_{\alpha + \beta = 2} \left|\frac{\partial^{\alpha + \beta}}{\partial^{\alpha} r \partial^{\beta} y} \psi_{s}(r,y) \right| 
\le
C(D) \sup_{(\tilde{r},\tilde{y})\in \overline{U_s}} 
\sum_{\alpha + \beta \le 2} \left|\frac{\partial^{\alpha + \beta}}{\partial^{\alpha} \tilde{r} \partial^{\beta} \tilde{y}} \psi_{s}(\tilde{r},\tilde{y}) \right|.
$$
Note that $\dist(U_s,\partial \Omega_s \setminus T)$ is bounded from below by a positive constant not depending on $s$. By \cite[Lemma~6.29]{GT1977} for $u$, $\Omega$, $T$ and $L$ as above we get 
$$
\sup_{(\tilde{r},\tilde{y})\in \overline{U_s}} 
\sum_{\alpha + \beta \le 2} \left|\frac{\partial^{\alpha + \beta}}{\partial^{\alpha} \tilde{r} \partial^{\beta} \tilde{y}} \psi_{s}(\tilde{r},\tilde{y}) \right| \le C(D) 
\sup_{(\tilde{r},\tilde{y})\in \overline{U_s}} |\psi_{s}(\tilde{r},\tilde{y})|.
$$
We have $\sup_{(\tilde{r},\tilde{y})\in \overline{U_s}} |\psi_{s}(\tilde{r},\tilde{y})| = \sup_{(r,y)\in \overline{D_s \cap R}} |\psi_{s}(r,y)|$. By Lemma \ref{lem:cylinder} this is bounded by a constant not depending on $s$. This implies (\ref{eq:uniformnorm}).

It is clear that for $\varepsilon = \min(r_1/2,T - T_3)$, $H = -y_0 + 1$ we have $W_s \in \calW(\varepsilon,1,H,1)$ for any $s \in [0,1]$.
The convergence $d(W_s,W_q) \to 0$ follows by Lemma \ref{lem:uniformapprox1}.
\end{proof}

\begin{lemma}
\label{lem:limit}
Let $D \in \calD_1$. Let $D_s$, $W_s$, $s \in [0,1]$, be  as in Lemma \ref{lem:uniformnorm} (see Figure \ref{fig:7}). Fix $q \in [0,1]$. Let $R$ be the rectangle defined in Lemma \ref{lem:uniformnorm} (see Figure \ref{fig:8}). Then we have
\[
\sup_{p \in \overline{D_s \cap D_{q} \cap R}} 
\left|\frac{\partial}{\partial y}(\psi_{D_s} - \psi_{D_{q}})(p)\right| \to 0
\quad \text{as} \quad
s \to  q.
\]
\end{lemma}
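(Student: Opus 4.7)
The plan is to combine the two conclusions of Lemma~\ref{lem:uniformnorm}: the uniform $C^2$ bound on $\psi_{D_s}$ on $\overline{D_s\cap R}$, which gives a uniform Lipschitz constant $a=a(D)$ for the first derivatives; and the convergence $d(W_s,W_q)\to 0$, which via Lemma~\ref{familyWt} yields uniform convergence of $\vp_{W_s}$ to $\vp_{W_q}$ on every fixed compact subset of $W_q$. The uniform Lipschitz bound is the key quantitative tool, because it lets us transfer pointwise convergence of $\partial_y\psi_{D_s}$ at a fixed interior point of $D_q$ into uniform convergence on the varying closed set $\overline{D_s\cap D_q\cap R}$, including near $\partial D_q$.

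First I would establish the pointwise convergence $\partial_y\psi_{D_s}(p')\to\partial_y\psi_{D_q}(p')$ at every fixed $p'=(r',y')\in D_q$ with $r'>0$. Since $(r',y',0)\in W_q$, some closed ball $\overline{B((r',y',0),\rho)}$ lies in $W_q$; by $d(W_s,W_q)\to 0$ it also lies in $W_s$ for $s$ sufficiently close to $q$, and Lemma~\ref{familyWt} gives uniform convergence $\vp_{W_s}\to\vp_{W_q}$ on it. Since $\vp_{W_s}-\vp_{W_q}$ is harmonic on this ball, standard interior derivative estimates for harmonic functions yield uniform convergence of the gradients on the concentric ball of radius $\rho/2$. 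Via the identity $\partial_y\psi_{D_s}(r,y)=\partial_y\vp_{W_s}(r,y,0)$, this gives the required pointwise convergence at $p'$.

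Next I would conclude by a compactness and contradiction argument. Suppose the claim fails: there exist $\varepsilon>0$, $s_n\to q$, and $p_n\in\overline{D_{s_n}\cap D_q\cap R}$ with $|\partial_y\psi_{D_{s_n}}(p_n)-\partial_y\psi_{D_q}(p_n)|\ge\varepsilon$. Passing to a subsequence, $p_n\to p^*\in\overline{D_q\cap R}$. Since $D_q\cap R$ is open and nonempty and has the interior cone property near $p^*$ (as a Lipschitz domain), I choose $p'\in D_q\cap R$ with $r$-coordinate bounded below and $|p^*-p'|<\varepsilon/(8a)$; for $n$ large, $p'\in D_{s_n}\cap R$ by $d(W_{s_n},W_q)\to 0$, and $|p_n-p'|\le\varepsilon/(4a)$. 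Using the $a$-Lipschitz property of $\partial_y\psi_{D_{s_n}}$ on $\overline{D_{s_n}\cap R}$ and of $\partial_y\psi_{D_q}$ on $\overline{D_q\cap R}$ (sufficiently close points being joinable by short paths in the respective Lipschitz domain), together with the pointwise convergence at $p'$ from the previous step, the triangle inequality yields
\[
|\partial_y\psi_{D_{s_n}}(p_n)-\partial_y\psi_{D_q}(p_n)|\le 2a|p_n-p'|+o(1)\le\tfrac{\varepsilon}{2}+o(1),
\]
a contradiction for large $n$.

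The main obstacle is precisely that $p^*$ may lie on $\partial D_q$ or on $\partial R$, so that neither $\vp_{W_s}$ nor $\vp_{W_q}$ is harmonic on a fixed ball around the corresponding three-dimensional point in $W_q$; classical harmonic derivative estimates cannot be applied directly at $p^*$. The uniform $C^2$ bound from Lemma~\ref{lem:uniformnorm}, and the consequent $s$-independent Lipschitz bound on the first derivatives, is exactly the ingredient that bypasses this difficulty.
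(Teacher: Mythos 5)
Your proof is correct, and it reaches the same conclusion via a genuinely different intermediate step. The paper's proof combines the uniform Lipschitz bound from Lemma~\ref{lem:uniformnorm} directly with the weighted $L^2$ gradient estimate of Lemma~\ref{lem:nablaW1W2}: writing
\[
\int_{D_s\cap D_q} r\left|\pd[\psi_s]{y}-\pd[\psi_q]{y}\right|^2\to 0
\]
and averaging the pointwise Lipschitz bound over a small ball $B(p,\delta)\cap D_s\cap D_q\cap R$ of area $\gtrsim\delta^2$, the paper obtains for every $\delta>0$
\[
\limsup_{s\to q}\;\sup_{p\in\overline{D_s\cap D_q\cap R}}\left|\pd[\psi_s]{y}(p)-\pd[\psi_q]{y}(p)\right|\le C\delta ,
\]
and lets $\delta\to 0$. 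You instead route through Lemma~\ref{familyWt} (uniform convergence of $\vp_{W_s}$ on fixed interior compacta, itself a consequence of Lemma~\ref{lemW1W2}), upgrade it to gradient convergence via interior estimates for harmonic functions, and then use the same uniform Lipschitz bound in a compactness/contradiction argument to push the convergence to the full varying closed set. Both arguments hinge on the same crucial ingredient --- the $s$-uniform Lipschitz constant $a$ for $\partial_y\psi_s$ on $\overline{D_s\cap R}$ --- and both implicitly use that $D_s\cap R$ is uniformly quasi-convex so that the second-derivative bound yields a Euclidean Lipschitz constant; neither proof makes this point explicit. The trade-off is that the paper's argument is more direct (one application of Lemma~\ref{lem:nablaW1W2} plus an averaging trick, with an explicit $\delta$-parametrised bound), whereas yours is a bit longer (passing through $\vp\to\vp$ in $L^\infty$ on compacta and harmonic interior estimates before applying the Lipschitz bound) but is arguably more intuitive, cleanly separating the two roles of ``interior convergence'' and ``boundary control via Lipschitz regularity.''
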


\begin{proof}
 Denote $\psi_s = \psi_{D_s}$. By Lemma~\ref{lem:uniformnorm}, there are $\ve, H$ such that $W_s \in \calW(\ve, 1, H, 1)$ for all $s \in [0, 1]$, and $d(W_s, W_q) \to 0$ as $s \to q$. Hence, by Lemma~\ref{lem:nablaW1W2},
\begin{align}
\label{eq:partialy0}
 & \int_{D_s \cap D_q} r \left|\pd[\psi_s]{y} - \pd[\psi_q]{y}\right|^2 \to 0  && \text{as $s \to q$.}
\end{align}
Again by Lemma~\ref{lem:uniformnorm}, $\pd[\psi_s]{y}$ is a Lipschitz function on $D_s \cap R$ with Lipschitz constant $a = a(D)$ not depending  on $s \in [0, 1]$. Let $\delta \in (0, 1)$. It follows that for any $p, p' \in D_s \cap D_q \cap R$ with $|p - p'| < \delta$ we have
\begin{align*}
 \left|\pd[\psi_s]{y}(p) - \pd[\psi_q]{y}(p)\right|^2 \le \left(\left|\pd[\psi_s]{y}(p') - \pd[\psi_q]{y}(p')\right| + 2 a \delta \right)^2 \le 8 a^2 \delta^2 + 2 \left|\pd[\psi_s]{y}(p') - \pd[\psi_q]{y}(p')\right|^2 .
\end{align*}
Furthermore, $|B(p, \delta) \cap D_s \cap D_q \cap R| \ge C(D) \delta^2$. Hence,
\begin{align*}
 & \left|\pd[\psi_s]{y}(p) - \pd[\psi_q]{y}(p)\right|^2 \le 8 a^2 \delta^2 + \frac{2}{C(D) \delta^2} \int_{B(p, \delta) \cap D_s \cap D_q \cap R} \left|\pd[\psi_s]{y} - \pd[\psi_q]{y}\right|^2 .
\end{align*}
Since $r \ge r_1 / 2$ on $R$, we conclude that
\begin{align*}
 & \sup_{p \in D_s \cap D_q \cap R} \left|\pd[\psi_s]{y}(p) - \pd[\psi_q]{y}(p)\right|^2 \le 8 a^2 \delta^2 + \frac{4}{C(D) \delta^2 r_1} \int_{D_s \cap D_q} r \left|\pd[\psi_s]{y} - \pd[\psi_q]{y}\right|^2 .
\end{align*}
By~\eqref{eq:partialy0}, it follows that
\begin{align*}
 \limsup_{s \to q} \left(\sup_{p \in D_s \cap D_q \cap R} \left|\pd[\psi_s]{y}(p) - \pd[\psi_q]{y}(p)\right|\right) \le 2 \sqrt{2} \, a \delta .
\end{align*}
Since $\delta \in (0, 1)$ was arbitrary, the proof is complete.
\end{proof}

\begin{lemma}
\label{lem:cylinder1}
Let $h > 0$ and $W = \{(x,y,z): \, x^2 + z^2 < 1, \, 0 > y  > -h\}$ be a cylinder. Then $\frac{\partial \vp}{\partial y} \ge 0$ on $W_+$. In particular $\frac{\partial \psi}{\partial y} \ge 0$ on $D = \{(x,y): \, x \in (0,1), y \in (-h,0)\}$. 
\end{lemma}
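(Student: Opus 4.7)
The plan is to solve the reduced problem \eqref{lapD}--\eqref{ncD} with $m=1$ explicitly on the rectangle $D=(0,1)\times(-h,0)$ by separation of variables. Writing $\psi(r,y)=R(r)Y(y)$ and introducing a separation constant $k^{2}$, the radial equation becomes Bessel's equation of order one; the requirement that $\psi$ be bounded at $r=0$ forces $R(r)=J_{1}(kr)$, and the Neumann condition $\partial_{r}\psi=0$ at $r=1$ forces $J_{1}'(k)=0$, so that $k$ runs through the positive zeros of $J_{1}'$. The vertical equation $Y''=k^{2}Y$ together with $Y'(-h)=0$ gives $Y(y)=\cosh(k(y+h))$ up to a multiplicative constant, and the Steklov condition at $y=0$ fixes $\nu=k\tanh(kh)$. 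Since $k\mapsto k\tanh(kh)$ is strictly increasing on $(0,\infty)$, the minimum is attained at $k=j'_{1,1}$, giving $\nu_{1,1}=j'_{1,1}\tanh(j'_{1,1}h)$ in accordance with Lemma~\ref{lem:eigencyl}, and the corresponding eigenspace is one-dimensional, spanned by
\begin{equation*}
\psi_{1,1}(r,y)=c\,J_{1}(j'_{1,1}r)\,\cosh(j'_{1,1}(y+h)).
\end{equation*}

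Next I would fix the sign. Since $j'_{1,1}$ is strictly smaller than the first positive zero $j_{1,1}$ of $J_{1}$, and $J_{1}$ is positive on $(0,j_{1,1})$, the factor $J_{1}(j'_{1,1}r)$ is positive for $r\in(0,1]$; $\cosh$ is positive everywhere. Hence choosing $c>0$ gives $\psi_{1,1}>0$ on $D$, which is the sign convention used throughout the paper. Differentiating in $y$,
\begin{equation*}
\frac{\partial\psi_{1,1}}{\partial y}(r,y)=c\,j'_{1,1}\,J_{1}(j'_{1,1}r)\,\sinh(j'_{1,1}(y+h)),
\end{equation*}
which is nonnegative on $D$ (and strictly positive for $y>-h$, $r>0$), proving the second conclusion.

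For the first conclusion I would simply write $\vp=\psi_{1,1}(r,y)\cos\theta$, so that $\partial_{y}\vp=(\partial_{y}\psi_{1,1})\cos\theta$; since $W_{+}$ corresponds in cylindrical coordinates to $\theta\in(-\pi/2,\pi/2)$ on which $\cos\theta\ge 0$, the nonnegativity of $\partial_{y}\psi_{1,1}$ on $D$ immediately gives $\partial_{y}\vp\ge 0$ on $W_{+}$. There is no real obstacle here; the only point worth checking is the justification that $\psi_{1,1}$ is exactly the separable solution above, which follows from the fact that separation of variables produces a complete basis of eigenfunctions on the cylinder (by Fourier--Bessel expansion) and that the eigenvalue $j'_{1,1}\tanh(j'_{1,1}h)$ matches $\nu_{1,1}$ from Lemma~\ref{lem:eigencyl}.
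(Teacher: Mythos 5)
Your proof is correct and follows essentially the same route as the paper, which simply quotes the explicit formula $\psi_W(r,y)=c(h)\,J_1(j'_{1,1}r)\cosh(j'_{1,1}(y+h))$ from the references and reads off the sign of $\partial\psi/\partial y$. The only difference is that you derive the formula by separation of variables and verify the eigenvalue identification yourself rather than citing it, which is a harmless (and complete) elaboration.
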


\begin{proof}
The result follows from explicit formulas for sloshing eigenfunctions in cylinders, we have $\psi_W(r,y) = c(h) J_1(j_{1,1}' r) \cosh(j_{1,1}' (y + h))$, where $J_1$ is the Bessel function of the first kind, and $j_{1,1}'$ is the first zero of its derivative (see e.g. \cite[page 502]{KK2009} or \cite[page 24]{BKPS2010}).
\end{proof}

The next result shows that the assertion of Theorem \ref{th:oddmon} holds for $W \in \calW_1$. The proof of this result uses some ideas from Proposition 3.2 \cite{JN2000}.

\begin{proposition}
\label{prop:main1}
Let $D \in \calD_1$ and $W = W(D) \in \calW_1$. Then we have $\frac{\partial \vp}{\partial x} \ge 0$ on $W$ and $\frac{\partial \vp}{\partial y} \ge 0$ on $W_+$.
\end{proposition}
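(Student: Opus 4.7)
The conclusion of the proposition follows once we establish
\begin{equation*}
\frac{\partial \psi_D}{\partial y} \ge 0 \qquad \text{on } B_2(D),
\end{equation*}
since Lemma~\ref{lem:B2y} then promotes this to $\partial_y \psi_D \ge 0$ on all of $D$ (and hence $\partial_y \vp \ge 0$ on $W_+$), and Lemma~\ref{lem:B2x} gives $\partial_x \vp \ge 0$ on $\overline W \supset W$. My plan is to prove this single sign condition by a continuation argument along the one-parameter family $\{D_s\}_{s \in [0,1]}$ of Lemma~\ref{lem:uniformnorm}, which interpolates between the rectangle $D_0 = (0,1) \times (y_0, 0)$ (the axial cross-section of a cylinder) and $D_1 = D$.

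Let
\begin{equation*}
S = \bigl\{ s \in [0, 1] : \partial_y \psi_{D_s} \ge 0 \text{ on } B_2(D_s) \bigr\},
\end{equation*}
with $0 \in S$ vacuously since $B_2(D_0) = \emptyset$. The goal is to show $S = [0,1]$ by proving that $S$ is both open and closed in $[0,1]$. Closedness is immediate from Lemma~\ref{lem:limit} together with the equi-Lipschitz bound of Lemma~\ref{lem:uniformnorm}: given $s_n \to s_* \in (0,1]$ with $s_n \in S$, and any $p_* \in B_2(D_{s_*})$, one picks $p_n \in B_2(D_{s_n})$ with $p_n \to p_*$ (possible since the parametrizations of the moving boundary depend continuously on $s$) and passes to the limit in the inequality $\partial_y \psi_{D_{s_n}}(p_n) \ge 0$.

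For openness at an interior point $s_0 \in (0, 1] \cap S$, Lemma~\ref{lem:B2y} first extends $\partial_y \psi_{D_{s_0}} \ge 0$ to all of $D_{s_0}$; Lemma~\ref{lem:B2x} then gives $\partial_x \vp_{W_{s_0}} \ge 0$ on $\overline{W_{s_0}}$; evaluating identity~\eqref{eq:psixphix} at $\theta = 0$ yields $\partial_r \psi_{D_{s_0}} \ge 0$ on $D_{s_0}$. With both sign conditions in hand, Lemma~\ref{lem:yderivative} upgrades the inequality to $\partial_y \psi_{D_{s_0}} > 0$ \emph{strictly} on the compact curve $B_2(D_{s_0})$, which therefore has a positive lower bound. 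The same combination of uniform convergence (Lemma~\ref{lem:limit}) and equi-Lipschitz control (Lemma~\ref{lem:uniformnorm}) used for closedness then transfers this strict positivity to $B_2(D_s)$ for all $s$ in a neighborhood of $s_0$. To treat the base point $s_0 = 0$, which is not covered by Lemma~\ref{lem:yderivative}, one uses the explicit cylinder formula of Lemma~\ref{lem:cylinder1}: $\partial_y \psi_{D_0}$ is strictly positive away from $\{y = y_0\}$, and since $B_2(D_s)$ always lies in the fixed $y$-interval $[y_0 + T_2 - T_1,\, y_0 + T_3 - T_1]$ bounded away from $y_0$, the limit set $\{r = 1\} \times [y_0 + T_2 - T_1,\, y_0 + T_3 - T_1]$ carries a positive lower bound for $\partial_y \psi_{D_0}$, and the same continuity argument gives $[0, \delta) \subset S$ for some $\delta > 0$.

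The main technical obstacle is the openness step, where the \emph{strict} pointwise positivity of $\partial_y \psi_{D_{s_0}}$ on the fixed curve $B_2(D_{s_0})$ must be transferred to nonnegativity on the nearby but \emph{moving} curve $B_2(D_s)$. The resolution depends on two ingredients already established in Section~3: the uniform equicontinuity of $\{\partial_y \psi_{D_s}\}$ near $R$ provided by Lemma~\ref{lem:uniformnorm}, and the uniform convergence of the same family on overlapping subdomains from Lemma~\ref{lem:limit}. Beyond this, the proof is an assembly of results already proved, and no further analytic input is required.
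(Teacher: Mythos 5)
Your proposal is correct and is essentially the paper's own argument: it uses the same deformation family $D_s$ interpolating to the cylinder and the same toolkit (Lemmas \ref{lem:B2y}, \ref{lem:B2x}, \ref{lem:yderivative}, \ref{lem:uniformnorm}, \ref{lem:limit}, \ref{lem:cylinder1}), with the continuation merely packaged as an open-and-closed argument in $s$ instead of the paper's choice of the infimum $q$ of bad parameters followed by a contradiction at a limit point $p_0 \in \overline{B_2(D_q)}$ via Lemma \ref{lem:yderivative}. One small fix in your openness step: $B_2(D_{s_0})$ is relatively open rather than compact, so you should work with its closure $\overline{B_2(D_{s_0})}$, whose endpoints lie in $B_1(D_{s_0}) \cup B_3(D_{s_0})$ where Lemma \ref{lem:yderivative} still gives strict positivity, and this yields the uniform positive lower bound your transfer argument requires.
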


\begin{proof}
In view of Lemmas \ref{lem:B2y} and \ref{lem:B2x} it is enough to show that $\frac{\partial \psi}{\partial y} \ge 0$ on $\overline{D}$. On the contrary assume that there exists a domain $D \in \calD_1$ and a point $p \in \overline{D}$ such that $\frac{\partial \psi}{\partial y}(p) < 0$.

Let $D_s$, $s \in [0,1]$ be the family of domains constructed before the statement of Lemma \ref{lem:uniformnorm} for the above domain $D = D_1$, and denote $\psi_s = \psi_{D_s}$, $\vp_s = \vp_{D_s}$. Let
\[
 q = \inf\left\{  s \in [0, 1] : \frac{\partial \psi_{s}}{\partial y}(p) < 0  \; \text{for some $p \in \overline{D_s}$} \right\}.
\]
We will first show that 
\begin{equation}
\label{eq:Dt0}
\frac{\partial \psi_{{q}}}{\partial y} \ge 0
\quad \text{on} \quad \overline{D_{q}}.
\end{equation}
If $q = 0$ this follows from Lemma \ref{lem:cylinder1} (here we use the fact that we know that the assertion of the proposition holds for cylinders). 

Now assume that $q > 0$. Then for all $s < q$ we have $\frac{\partial \psi_{s}}{\partial y} \ge 0$ on $D_s$. Note that for all $s < q$ we have $D_{q} \subset D_s$. Let $R$ be the rectangle defined in Lemma \ref{lem:uniformnorm}. By Lemma \ref{lem:limit} (for $s < q$) we have
$$
\sup_{p \in \overline{D_s \cap D_{q} \cap R}} 
\left|\frac{\partial}{\partial y}(\psi_{s} - \psi_{{q}})(p)\right| 
= \sup_{p \in \overline{D_{q} \cap R}} 
\left|\frac{\partial}{\partial y}(\psi_{s} - \psi_{{q}})(p)\right|
\to 0,
$$
as $s \to q^-$. This with the fact that for all $s < q$ we have $\frac{\partial \psi_{s}}{\partial y} \ge 0$ on $D_s$ implies that $\frac{\partial \psi_{{q}}}{\partial y} \ge 0$ on $\overline{D_{q} \cap R}$. But $B_2(D_{q}) \subset \overline{D_{q} \cap R}$ so Lemma \ref{lem:B2y} implies that $\frac{\partial \psi_{{q}}}{\partial y} \ge 0$ on $\overline{D_{q}}$.

We have shown (\ref{eq:Dt0}). Note that by Lemma \ref{lem:B2x} we get $\frac{\partial \vp_{q}}{\partial x} \ge 0$ on $\overline{W(D_q)}$. It follows that $\frac{\partial \psi_{{q}}}{\partial r} \ge 0$ on $\overline{D_{q}}$. Now we will show that there exist a point $p_0 \in \overline{B_2(D_{q})}$ such that
$$
\frac{\partial \psi_{{q}}}{\partial y}(p_0) = 0.
$$

By (\ref{eq:Dt0}) we get that $q < 1$. By the definition of $q$ we get that there exists a decreasing sequence $\{s_n\}_{n = 1}^{\infty} \subset [0,1]$, $ s_n \to q$, such that for all $n \in \N$  we have $(\partial \psi_{{s_n}} / \partial y) (p_n) < 0$  for some $p_n \in \overline{D_{s_n}}$.

By Lemma \ref{lem:B2y} we may assume that $p_n \in B_2(D_{s_n})$. Note that $D_{s_n} \subset D_{q}$. We also may assume (after taking a subsequence if necessary) that $p_n \to p_0 \in \overline{B_2(D_{q})}$. Note also that $p_n \in \overline{R}$ for any $n = 0, 1, 2, \ldots$. We have
\begin{equation}
\label{eq:DtnDt0}
\begin{aligned}
 \left|\frac{\partial \psi_{{s_n}}}{\partial y}(p_n)
- \frac{\partial \psi_{{q}}}{\partial y}(p_0)\right| 
& \le \left|\frac{\partial \psi_{{s_n}}}{\partial y}(p_n)
- \frac{\partial \psi_{{q}}}{\partial y}(p_n)\right|
+ \left|\frac{\partial \psi_{{q}}}{\partial y}(p_n)
- \frac{\partial \psi_{{q}}}{\partial y}(p_0)\right| \\
& \hspace{-2em} \le \sup_{p \in \overline{D_{s_n} \cap R}} 
\left|\frac{\partial \psi_{{s_n}}}{\partial y}(p)
- \frac{\partial \psi_{{q}}}{\partial y}(p)\right|
+ \left|\frac{\partial \psi_{{q}}}{\partial y}(p_n)
- \frac{\partial \psi_{{q}}}{\partial y}(p_0)\right|.
\end{aligned}
\end{equation}
By Lemma \ref{lem:limit} the first expression in (\ref{eq:DtnDt0}) tends to $0$ as $n \to \infty$. The second expression in (\ref{eq:DtnDt0}) tends to $0$ as $n \to \infty$ because $p_n \to p_0$, $p_n \in \overline{D_{q} \cap R}$ ($n = 0, 1, 2, \ldots$) and $\frac{\partial \psi_{{q}}}{\partial y}$ is a Lipschitz function on $\overline{D_{q} \cap R}$ by Lemma \ref{lem:uniformnorm}. 

Since $\frac{\partial \psi_{{s_n}}}{\partial y}(p_n) < 0$ it follows that $\frac{\partial \psi_{{q}}}{\partial y}(p_0) \le 0$. We know that $\frac{\partial \psi_{{q}}}{\partial y} \ge 0$ and $\frac{\partial \psi_{{q}}}{\partial r} \ge 0$ on $\overline{D_{q}}$, so $\frac{\partial \psi_{{q}}}{\partial y}(p_0) = 0$. But $p_0 \in \overline{B_2(D_{q})}$, a contradiction with Lemma \ref{lem:yderivative}.
\end{proof}

\section{Monotonicity of the odd eigenfunction}
In the previous section we proved monotonicity properties of $\psi_{1,1}$ for a special class of piecewise smooth domains $\calW_1$. In this section we pass to the limit to obtain the same result for class $\calW$. In the whole section we use notation \eqref{psi11}.

\begin{proof}[proof of Theorem \ref{th:oddmon}]
Note that by scaling it is sufficient to show the assertion of the theorem for $D \in \calD$ such that $r_0 = 1$. 

Let us consider the following inequality:
\begin{align}
\label{eq:in}
 \psi(r_2,y_2) & \ge \psi(r_1,y_1) && \text{for any $(r_1,y_1), (r_2,y_2) \in D$, $r_2 \ge r_1$, $y_2 \ge y_1$.}
\end{align}
Our first aim will be to justify  \eqref{eq:in} for all $D \in \calD$ such that $r_0 = 1$. To show this inequality we will use the following scheme. If we have a family of sets $\{D_s: \, s \in [0,s_0]\}$ such that all these sets belong to $\calD(\eps,M,H,1)$, $W_s = W(D_s)$, $d_{\calW(\eps,M,H,1)}(W_s,W_0) \to 0$ as $s \to 0$ and  \eqref{eq:in} holds for all $D_s$ where $s \in (0,s_0]$ then Lemma \ref{familyWt} guarantees that  \eqref{eq:in} holds also for $D_0$. We will show that  \eqref{eq:in} holds for all $D \in \calD$ such that $r_0 = 1$ in a number of steps.

{\bf{Step 1.}} Note that Proposition \ref{prop:main1} gives  \eqref{eq:in} for $D \in \calD_1$.

{\bf{Step 2.}} Let $\calD_2$ be the class of domains satisfying all conditions for the class $\calD_1$ except that the condition (e) is replaced by
\begin{enumerate}
\item[(e')] $t \to r(t)$ is a continuous function on $[T_1,T]$.
\end{enumerate}
We will show that  \eqref{eq:in} holds for $D \in \calD_2$. Fix  $D \in \calD_2$ and let $y_0$, $ r_1  = T_1$, $T_2$, $T_3$, $T$ such as in Definition \ref{classD1}. Let $\eps = \min(r_1,T - T_3,T_2 - T_1)$, $H = -y_0 + 1$,  so that $D \in \calD(\ve, 1, H, 1)$, and let $r(t)$, $y(t)$, $t \in [0, T]$, be the parametrization of $B(D)$. 

For $s > 0$ let $h_s \in C^{\infty}(\R)$ be such that $h_s \ge 0$, $\supp(h_s) \subset (-s,s)$, $\int_{\R} h_s = 1$. Put $s_0 = \eps/2$. For $s \in (0,s_0]$ let $y_s(t) = y(t)$  for $t \in [0,T]$, $r_s(t) = r(t)$ for $t \in [0,T_1 + \eps/2] \cup [T - \eps/2,T]$ and $r_s(t) = \int_{-s}^s r(t - u) h_s(u) \, du$ for $t \in [T_1 + \eps/2, T - \eps/2]$. Let $D_0 = D$ and  for $s \in (0,s_0]$, let $D_s$ be a domain defined like $D$ but with $(r(t),y(t))$ replaced by $(r_s(t),y_s(t))$. Note that $D_s \in \calD_1$  for $s \in (0,s_0]$ and $D_s \in \calD(\eps/2,1,H,1)$  for $s \in [0,s_0]$. We have $\|y_s - y_0\|_{\infty} = 0$ and $\|r_s - r_0\|_{\infty} \to 0$ as $s \to 0$ so Lemma \ref{lem:uniformapprox1} gives that $d_{\calW(\eps/2,1,H,1)}(W_s,W_0) \to 0$ as $s \to 0$, where $W_s = W(D_s)$. Hence Step 1 and Lemma \ref{familyWt} give that  \eqref{eq:in} holds for  $D_0 = D \in \calD_2$.

{\bf{Step 3.}} Let $\calD_3$ be the class of domains satisfying all conditions for the class $\calD_1$ except that condition (e) is deleted and condition (c) is replaced by
\begin{enumerate}
\item[(c')] $t \to y(t)$, $t \to r(t)$ are strictly increasing continuous functions on $[T_2,T_3]$.
\end{enumerate}
It is clear that $\calD_2 \subset \calD_3$, and since we can replace $(r(t), y(t))$ by $(r(y^{-1}(y_0 + t - T_1)), y_0 + t - T_1)$ for $t \in [T_2, T_3]$ (this is just a reparametrization), we see that in fact $\calD_3 = \calD_2$.

{\bf{Step 4.}} Let $\calD_4$ be the class of domains satisfying all conditions for the class $\calD_1$ except that condition (e) is deleted and condition (c) is replaced by
\begin{enumerate}
\item[(c'')] $t \to y(t)$, $t \to r(t)$ are nondecreasing continuous functions on $[T_2,T_3]$.
\end{enumerate}

Fix $D \in \calD_4$ and let $y_0$, $r_1 = T_1$, $T_2$, $T_3$, $T$ be such as in Definition \ref{classD1}. Let $\eps = \min(r_1,T - T_3)$, $H = -y_0 + 1$,  so that $D \in \calD(\ve, 1, H, 1)$, and let $r(t)$, $y(t)$, $t \in [0, T]$, be the parametrization of $B(D)$.

For $s \in [0, 1)$ define $r_s(t) = r(t)$ and $y_s(t) = y(t)$ for $t \in [0, T_2] \cup [T_3, T]$, and let
\begin{align*}
 r_s(t) & = (1 - s) r(t) + s \! \left(r_1 + \frac{t - T_2}{T_3 - T_2} \, (1 - r_1)\right) , \\ y_s(t) & = (1 - s) y(t) + s (y_0 + t - T_1)
\end{align*}
for $t \in [T_2, T_3]$. Let $D_s$ be the corresponding domain. Note that $D_s \in \calD_3$  for $s \in (0,s_0]$ and $D_s \in \calD(\eps,1,H,1)$  for $s \in [0,s_0]$. We have $\|y_s - y_0\|_{\infty} \to 0$ and $\|r_s - r_0\|_{\infty} \to 0$ as $s \to 0$ so Lemma \ref{lem:uniformapprox1} gives that $d_{\calW(\eps,1,H,1)}(W_s,W_0) \to 0$ as $s \to 0$, where $W_s = W(D_s)$. Hence Step 2, Step 3 and Lemma \ref{familyWt} give that  \eqref{eq:in} holds for  $D_0 = D \in \calD_4$.

{\bf{Step 5.}} Let $\calD_5$ be the class of domains satisfying all conditions for the class $\calD_1$ except that conditions (b) and (e) are deleted, $T_2 = T_1$ and condition (c) is replaced by
\begin{enumerate}
\item[(c'\kern-0.08em '\kern-0.08em ')] $t \to y(t)$, $t \to r(t)$ are nondecreasing continuous functions on $[T_1,T_3]$.
\end{enumerate}
Fix $D \in \calD_5$ and let $y_0$, $r_1 = T_1 = T_2$, $T_3$, $T$ be such as in Definition \ref{classD1}. Let $\eps = \min(r_1,T - T_3)$, $H = -y_0 + 1$,  so that $D \in \calD(\ve, 1, H, 1)$, and let $r(t)$, $y(t)$, $t \in [0, T]$, be the parametrization of $B(D)$.

Let $s \in [0, 1/2]$ and $T_2(s) = (1 - s) T_1 + s T_3$. We define $r_s(t) = r(t)$ and $y_s(t) = y(t)$ for $t \in [0, T_1] \cup [T_3, T]$, $r_s(t) = r_1$ and $y_s(t) = y_0 + t - T_1$ for $t \in [T_1, T_2(s)]$, and let
\begin{align*}
 r_s(t) & = r \! \left( T_3 - \frac{T_3 - t}{1 - s} \right) , & y_s(t) & = (1 - s) y \! \left( T_3 - \frac{T_3 - t}{1 - s} \right) + s y(T_3)
\end{align*}
for $t \in [T_2(s), T_3]$. The corresponding domains $D_s$ are in $\calD_4$ for $s \in (0,1/2]$, and $D_s \in \calD(\eps,1,H,1)$ for $s \in [0,1/2]$. Since $r(t)$ and $y(t)$ are continuous, we have $\|y_s - y_0\|_{\infty} \to 0$ and $\|r_s - r_0\|_{\infty} \to 0$ as $s \to 0$. Again, Lemma~\ref{lem:uniformapprox1} and Lemma~\ref{familyWt} together with Step~4 give that~\eqref{eq:in} holds for  $D_0 = D \in \calD_5$.

{\bf{Step 6.}} Let  $\calD_6$ be the class of all domains from the class $\calD$ such that $r_0 = 1$. Fix  $D \in \calD_6$ and let $\eps$, $M$, $H$, $T$, $y_0$, $r(t)$, $y(t)$ be such as in Definition~\ref{classD}.  Let $s \in [0, \eps/2]$. We define $r(t) = t$ and $y(t) = y_0$ for $t \in [0, s]$, $r(t) = 1$ and $y(t) = t - T$ for $t \in [T - s, T]$, and
\begin{align*}
 r_s(t) & = s + (1 - s) r \! \left(\frac{t - s}{T - 2 s} \, T \right) , & y_s(t) & = -s + \frac{-y_0 - s}{-y_0} \, y \! \left(\frac{t - s}{T - 2s} \, T \right)
\end{align*}
for $t \in [s, T - s]$. Let $D_s$ correspond to $r_s(t)$ and $y_s(t)$ for $s \in [0, \eps/2]$. Observe that $D_s \in \calD(\eps, 2M, H, 1)$ for $s \in [0, \eps/2]$ and $D_s  \in \calD_5$ for $s \in (0,\eps/2]$. Furthermore, $\|y_s - y_0\|_{\infty} \to 0$ and $\|r_s - r_0\|_{\infty} \to 0$ as $s \to 0$, so Lemma~\ref{lem:uniformapprox1}, Lemma~\ref{familyWt} and Step~5 give~\eqref{eq:in} for  $D_0 = D \in \calD_6$.

{\bf{Step 7.}} Step 6 shows  \eqref{eq:in} for all $D \in \calD$ such that $r_0 = 1$. This implies the assertion of Theorem \ref{th:oddmon} but with weak inequalities instead of strict ones. By Lemma \ref{lem:global} this gives $\frac{\partial \varphi}{\partial x} \ge 0$ on $W$ and $\frac{\partial \varphi}{\partial y} \ge 0$ on $W_+$. However $\frac{\partial \varphi}{\partial x}$, $\frac{\partial \varphi}{\partial y}$ are harmonic functions in $W$ which implies $\frac{\partial \varphi}{\partial x} > 0$ on $W$ and $\frac{\partial \varphi}{\partial y} > 0$ on $W_+$. Of course, this gives strict inequalities in Theorem \ref{th:oddmon}.
\end{proof}

\begin{proof}[proof of Theorem \ref{th:mainconvex} (ii), (iii)]
Elementary geometric considerations (we omit the details here) give that any axisymmetric, convex, bounded liquid domain satisfying the John condition belongs to the class of domains $\calD$. Hence Theorem \ref{th:mainconvex} (ii) follows from Theorem \ref{th:oddmon}. The inequalities in Theorem \ref{th:mainconvex} (iii) follows from Theorem \ref{th:mainconvex} (ii) by Lemma \ref{lem:global}. The last sentence in Theorem \ref{th:mainconvex} (iii) is an easy corollary of Theorem \ref{th:mainconvex} (i).
\end{proof}

\begin{proof}[proof of Proposition \ref{notJohn}]
Let $F = \{(x,y,z): \, x^2 + z^2 < R^2, y = 0\}$ for some $R > 0$. Let $\theta_0 \in (\pi/2,\pi)$ denote the angle between the free surface $F(D)$ and the rigid wall $B(D)$. It was proved in~\cite{LBK1984} (formula~(13.3)) that (see also~\cite{K1980})
\begin{align*}
 \psi(r, 0) & = \psi(R, 0) (1 - \nu (R - r) \cot \theta_0 ) + o(R - r) && \text{as $r \to R^-$.}
\end{align*}

Let us recall that we may assume that $\psi > 0$ on $D$. It is a standard result that $\psi$ is continuous on $\overline{D}$ and $\psi(0,0) = 0$. Of course $\vp$ attains maximum on $\overline{F}$ so $\psi$ must attain maximum on $\overline{F(D)}$. 

If $\psi(R,0) = 0$ then clearly $\psi(r,0)$ does not attain its maximum at $r = R$. If $\psi(R,0) > 0$ then $\frac{\partial \psi}{\partial r}(R, 0) = \nu \cot \theta_0 \psi(R, 0) < 0$ and therefore $\psi(R,0)$ cannot be the maximum of $\psi(r,0)$. 

It follows that $\psi(r,0)$ attains its maximum inside the interval $(0,R)$ and $\frac{\partial \psi}{\partial r}(r,0)$ changes the sign on $(0,R)$. Hence $\psi(r,y)$ has its maximum in the interior of $F(D)$ and $\frac{\partial \psi}{\partial r}(r,y)$ changes the sign in $D$.
\end{proof}

\vskip 10 pt

\noindent \textbf{Acknowledgments.} The authors would like to thank N. Kuznetsov for many useful discussions on the subject of the paper during his two visits in Wroclaw and the authors' visit in St. Petersburg. The authors would also like to thank N. Kuznetsov for sharing his knowledge about the sloshing problem and his help in preparing the introduction section in this paper.

\end{document}